\numberwithin{equation}{section}
\newtheorem{theorem}{Theorem}[section]
\newtheorem{lemma}[theorem]{Lemma}
\newtheorem{prop}[theorem]{Proposition}
\newtheorem{example}[theorem]{Example}
\theoremstyle{definition}
\newtheorem{remark}[theorem]{Remark}
\theoremstyle{definition}
\theoremstyle{definition}
\def\dashint{\operatorname%
{\,\,\text{\bf-}\kern-.98em\DOTSI\intop\ilimits@\!\!}}
\def\\det{\text{\det}}
\def\.5{\frac{1}{2}}
\newcommand{\RN}[1]{%
  \textup{\uppercase\expandafter{\romannumeral#1}}%
}
\renewcommand{\epsilon}{\varepsilon}
\newcounter{marnote}
\begin{document}

\title[Asymptotics of the stress concentration]{Asymptotics of the stress concentration in high-contrast elastic composites}
\author[H.G. Li]{Haigang Li}
\address[H.G. Li]{School of Mathematical Sciences, Beijing Normal University, Laboratory of MathematiCs and Complex Systems, Ministry of Education, Beijing 100875, China.}
\email{hgli@bnu.edu.cn}

\author[L.J. Xu]{Longjuan Xu}
\address[L.J. Xu]{Department of Mathematics, National University of Singapore, 10 Lower Kent Ridge Road, Singapore 119076.}
\email{ljxu311@163.com}


\date{\today} 


\maketitle
\begin{abstract}
A long-standing area of materials science research has been the study of electrostatic, magnetic, and elastic fields in composite with densely packed inclusions whose material properties differ from that of the background. For a general elliptic system, when the coefficients are piecewise H\"{o}lder continuous and uniformly bounded, an $\varepsilon$-independent bound of the gradient was obtained by Li and Nirenberg \cite{ln}, where $\varepsilon$ represents the distance  between the interfacial surfaces. However, in high-contrast composites, when $\varepsilon$ tends to zero, the stress always concentrates in the narrow regions. As a contrast to the uniform boundedness result of Li and Nirenberg, in order to investigate the role of $\varepsilon$ played in such kind of concentration phenomenon, in this paper we establish the blow-up asymptotic expressions of the gradients of solutions to the Lam\'{e} system with partially infinite coefficients in dimensions two and three. We discover the relationship between the blow-up rate of the stress and the relative convexity of adjacent surfaces, and find  a family of blow-up factor matrices with respect to the boundary data. Therefore, this work completely solves the Babu\u{s}ka problem on blow-up analysis of stress concentration in high-contrast composite media. Moreover, as a byproduct of these local analysis, we establish an extended Flaherty-Keller formula on the global effective elastic property of a periodic composite with densely packed fibers, which is related to the ``Vigdergauz microstructure'' in the shape optimization of fibers.
\end{abstract}

\section{Introduction}

In this paper we are concerned with the blow-up behavior of the gradients of solutions to a class of elliptic systems, stimulated by the study of composite media with closely spaced interfacial boundaries. It is a long-standing area of material science research to study the high concentration of electrostatics, magnetic, and elastic fields in high-contrast composites with densely packed inclusions since the time of Maxwell and Reyleigh. This requires an understanding of micro-structural effects, especially from the distances (say, $\varepsilon$)  between inclusions, because when the inclusions are close to touching, the charge density becomes nearly singular. To evaluate the electrostatic fields (where the potential function is scalar-valued), the potential theory, Fourier analysis, and numerical method have been fully developed. While, for the elastic field (where the deformation displacement is vector-valued), in order to predict damage initiation and growth in carbon-fiber epoxy composites at the fiber scale level,  Babu\u{s}ka, et al. \cite{ba} assumed the systems of linear elasticity 
$$\mathcal{L}_{\lambda, \mu}u=\mu\Delta u+(\lambda+\mu)\nabla(\nabla\cdot u)$$
in unidirectional composites to numerically analyze the residual stresses and stresses due to mechanical loads, where $u=(u^1,u^2,u^3)^{\mathrm{T}}$ expresses the  displacement. Obviously, this multiscale problem need more regorous mathematical treatment and numerical analysis to control the errors of the analysis. On the other hand, we emphasize that there is a significant difficulty in applying the method developed for scalar equations to systems of equations. For instance, the maximum principle does not hold for the Lam\'{e} system. Due to these difficulties on PDE theory and numerical analysis as well as the importance in practical applications, it arouses great interest of many applied mathematicians and engineers. In the last  two decades, there has been an extensive study on the gradient estimates of solutions to elliptic equations and systems with discontinuous coefficients, to show whether the stress remains bounded or blows up when inclusions touch or nearly touch.

Bonnetier and Vogelius \cite{bv} considered the elliptic equation with piecewise constant coefficients in dimension two
$$\nabla(a_k(x)\nabla u)=0\quad\mbox{in}~D,$$
where the scalar $u$ is the out of plane displacement, $D$  represents the cross-section of a fiber-reinforced composite taken perpendicular to the fibers, containing a finite number of inhomohenuities, which are very closely spaced and may possible touch. The coefficients $0<a_k(x)<\infty$ take two different constant values, after rescaling,
\begin{align*}
a_k(x)&=k\quad\mbox{for~}x~\mbox{inside~the~cross-sections~of~the~fibers},\\
a_k(x)&=1\quad\mbox{elsewhere~in~}D.
\end{align*}
Despite the discontinuity of the coefficient along the interfaces, they proved that any variational solution $u$ is in $W^{1,\infty}$, which actually improves a classical regularity result due to De Giorgi and Nash, which asserts that $H^1$ solution is in some H\"{o}lder class. A general result was established by Li and Vogelius \cite{lv} for a class of divergence form elliptic equations with piecewise H\"{o}lder continuous coefficients. They obtained a uniform bound of $|\nabla u|$ regardless of $\varepsilon$ in any dimension $d\geq2$. Li and Nirenberg \cite{ln} extended the results in \cite{lv} to general elliptic systems including systems of elasticity. This, in particular, answered in the affirmative the question that is naturally led to by the above mentioned numerical indication in \cite{ba} for the boundedness of the stress as $\varepsilon$ tends to zero. Dong and Xu \cite{dx} further showed that a $W^{1,1}$ weak solution is Lipschitz and piecewise $C^1$. Recently, Dong and Li \cite{dl} used an image charge method to construct a Green's function for two adjacent circular inclusions and obtained more interesting higher-order derivative estimates for non-homogeneous equations making clear their specific dependence on $k$ and $\varepsilon$ exactly. But for more general elliptic equations and systems, and more general shape of inclusions, it is still an open problem to estimate higher-order derivatives in any dimension. We draw the attention of readers to the open problem on page 894 of \cite{ln}.

As mentioned above, the concentration of the stresses is greatly influenced by the thickness of the ligament between inclusions. To figure out the influence from this thickness $\varepsilon$, one assumes that the material parameters of the inclusions degenerate to infinity. However, this makes the situation become quite different. As a matter of fact, in 1960's, in the context of electrostatics, Keller \cite{keller1} computed the effective electrical conductivity for a composite medium consisting of a dense cubic array of identical perfectly conducting spheres (that is, $k$ degenerates to $\infty$) imbedded in a matrix medium and first discovered that it becomes infinite when sphere inclusions touch each other. Keller found that this singularity is not contained in the expressions given by Maxwell, and by Meredith and Tobias \cite{mt}. See also Budiansky and Carrier \cite{bc}, and Markenscoff \cite{m}. Rigorous proofs were later carried out  by Ammari et al. \cite{akl,aklll} for the case of circular inclusions by using layer potential method, together with the maximum principle. Since then, there is a long list of literature in this direction of research, for example, see \cite{abtv, adkl, bly1, bly2, bt, dong, g, gb, gn, kleey, keller2, Li-Li, ll, llby, lx, lz,  ly2, lyu, y1,y2} and the references therein. It is proved that the blow-up rate of $|\nabla u|$ is $\varepsilon^{-1/2}$ in dimension two and $|\varepsilon\log\varepsilon|^{-1}$ in dimension three. From the perspective of practical application in engineering and the requirement of numerical algorithm design, it is more interesting and important to characterize the singular behavior of $\nabla u$, see \cite{ackly,kly,kly2,lhg,lwx,lyu1,lly}.

In the context of linear elasticity, for Lam\'{e} system with partially infinite coefficients,   by building an iteration technique with respect to the energy, the first author and his collaborators  overcame the difficulty caused by the lack of the maximum principle, obtained the upper and lower pointwise bounds of $|\nabla u|$, and  showed that $|\nabla u|$ may blow up on the shortest line between two adjacent inclusions, see \cite{bjl,bll1,bll2,l}. By using the polynomial function $x_{d}=|x'|^m$, $m\geq2$, as a local expression of inclusion's boundary to measure its order of convexity, Li and Hou \cite{hl} revealed the relationship between the blow-up rate of $|\nabla u|$ and the convexity order $m$. However, under the same logic as in the electrostatics problem, what one cares more about in practical applications is how to obtain an asymptotic formula to characterize the singular behaviour of $\nabla u$ in the whole narrow region between two adjacent inclusions. The main contribution of the paper is that we completely solve this problem in two physically relevant dimensions $d=2$ and $3$, and for all $m\geq2$. For $d\geq4$, the result is similar. Our asymptotic expressions of the gradients of solutions not only show the optimality of the blow-up rates, which depend only on the dimension $d$ and the convexity order $m$ of the inclusions, but also provide a family of blow-up factor matrices, which are linear functionals of boundary value data, determining whether or not blow-up occurs. Notice that when $m>2$, the curvature of the inclusions vanishes at the two nearly touch points, so in general we can not use a spherical inclusion to approximate an $m$-convex inclusion. 

The asymptotic formulas obtained above clearly reflect the local property of $\nabla u$. Beyond this, they can further influence the global property of a composite. For the effective elastic moduli of a composite, Flaherty and Keller \cite{fk} obtained an symptotic formula for a retangular array of cylinders $(m=2)$ in the nearly touching, when the cylinders are hard inclusions and showed their validity numerically. As an application of the above local asymptotic formulas, we give an extended Flaherty-Keller formula for $m$-convex inclusions, which is also related to the ``Vigdergauz microstructure'' \cite{v}, having a large volume fraction in the theory of structure optimization, see Grabovsky and Kohn \cite{gk}. 

To end this introduction, we make some comments on the corresponding numerical problem. Accurate numerical computation of the gradient in the present of closely spaced inclusions is also a well-known challenging problem in computational mathematics and sciences.  Here it should be noted that Lord Rayleigh, in his classic paper \cite{rl}, use Fourier approach to determine the effective conductivity of a composite material consisting of a periodic array of disks in a uniform background. In the case that inclusions are reasonably well separated or have conductivities close to that of the background, Rayleigh's method gives excellent result. Unfortunately, if the inclusions are close to touching and their conductivities differ greatly from that of the background, the charge density becomes nearly singular and the number of computation degrees of freedom required extremely large. Recently, a hybrid numerical method was developed by Cheng and Greengard \cite{cg}, and Cheng \cite{cheng}. Related works can be referred to Kang, Lim, and Yun \cite{kly}, and McPhedran, Poladian, and Milton \cite{mpm}. For high-contrast elastic composite, a serious difficulty arises in applying the methods for scalar equations to systems of equations. We expect our asymptotic formulas of $\nabla u$ in the narrow regions, the most difficult areas to deal with, can open up a way to do some computation for inclusions of arbitrary shape.

This paper consists of eight sections including introduction. In Section \ref{sec problems}, we first fix our domain and formulate the problem with partially infinite coefficients, and then introduce a family of vector-valued auxiliary functions with several preliminary estimates including the main ingredient Proposition \ref{prop a11} for the asymptotics of $a_{11}^{\alpha\alpha}$, $\alpha=1,\cdots,d$. In Section \ref{sec mainresults}, a family of  the blow-up factors is defined. Then our main results are stated. Theorem \ref{thm1} and Theorem \ref{thm2} are for 2-convex inclusions in 2D and 3D, respectively, Theorem \ref{thmhigher} and Theorem \ref{thmhigher2} are for $m$-convex inclusions. Finally we give an example to show the dependence on the precise geometry feature of $D_{1}$ and $D_{2}$. In Section \ref{sec pf prop a11 b}, we prove the two important ingredients Propositions \ref{prop a11} and \ref{prop converge b}, where two improved estimates,  Theorem \ref{coro v1} and Theorem \ref{coro v1d+1}, are used . The proofs of Theorems \ref{thm1} and \ref{thm2} are given in Section \ref{sec pf thm1}.  We prove Theorems \ref{thmhigher} and \ref{thmhigher2} in Section \ref{proof general}. Finally, by applying the local asymptotic formulas established in the previous sections, we give an extended Flaherty-Keller formula in Section \ref{application}. The proof of Theorem \ref{coro v1} and Theorem \ref{coro v1d+1} is given in the Appendix.

\section{Problem formulation, Decomposition and Some preliminary results}\label{sec problems}
In this section we first fix our notations and formulate the problems, then identify the key difficulties and present the strategy to solve them, and finally introduce our auxiliary functions, involving the parameters of Lam\'{e} system, and give some preliminary results.
\subsection{Problem formulation}\label{subsec prb form}
Because the aim of this paper is to study the asymptotic behavior of $\nabla u$ in the narrow region between two adjacent inclusions, we may without loss of generality restrict our attention to a situation with only two adjacent inclusions. The basic notations used in this paper follow from  \cite{bll1}. 

We use $x=(x',x_{d})$ to denote a point in $\mathbb R^{d}$, where $x'=(x_{1},\cdots,x_{d-1})$ and $d=2,3$. Let $D$ be a bounded open set in $\mathbb R^{d}$ with $C^{2}$ boundary. $D_{1}$ and $D_{2}$ are two disjoint convex open subsets in $D$ with $C^{2,\gamma}$ $(0<\gamma<1)$ boundaries, $\varepsilon$-apart, and far away from $\partial D$. That is,
\begin{equation*}
\begin{split}
\overline{D}_{1},\overline{D}_{2}\subset D,\quad
\varepsilon:=\mbox{dist}(D_{1},D_{2})>0,\quad\mbox{dist}(D_{1}\cup D_{2},\partial D)>\kappa_{0}>0,
\end{split}
\end{equation*}
where $\kappa_{0}$ is a constant independent of $\varepsilon$. We also assume that the $C^{2,\gamma}$ norms of $\partial{D}_{1}$, $\partial{D}_{2}$, and $\partial{D}$ are bounded by some positive constant independent of $\varepsilon$. Set
$$\Omega:=D\setminus\overline{D_{1}\cup D_{2}}.$$

Assume that $\Omega$ and $D_{1}\cup D_{2}$ are occupied, respectively, by two different isotropic and homogeneous materials with different Lam\'{e} constants $(\lambda, \mu)$ and $(\lambda_1, \mu_1)$. Then the elasticity tensors for the background and the inclusion can be written, respectively, as $\mathbb{C}^0$ and $\mathbb{C}^1$, with
$$C_{ijkl}^0=\lambda\delta_{ij}\delta_{kl} +\mu(\delta_{ik}\delta_{jl}+\delta_{il}\delta_{jk}),$$
and
$$C_{ijkl}^1=\lambda_1\delta_{ij}\delta_{kl} +\mu_1(\delta_{ik}\delta_{jl}+\delta_{il}\delta_{jk}),$$
where $i, j, k, l=1,2,\cdots,d$ and $\delta_{ij}$ is the kronecker symbol: $\delta_{ij}=0$ for $i\neq j$, $\delta_{ij}=1$ for $i=j$. Let $u=(u^1, u^2,\cdots,u^{d})^{\mathrm{T}}:D\rightarrow\mathbb{R}^{d}$ denote the displacement field. For a given vector-valued function $\varphi=(\varphi^1,\varphi^2,\cdots,\varphi^{d})^{\mathrm{T}}$, we consider the following Dirichlet problem for the Lam\'{e} system with pieceiwise constant coefficients:
 \begin{align}\label{Lame}
\begin{cases}
\nabla\cdot \left((\chi_{\Omega}\mathbb{C}^0+\chi_{D_{1}\cup{D}_{2}}\mathbb{C}^1)e(u)\right)=0,&\hbox{in}~~D,\\
u=\varphi, &\hbox{on}~~\partial{D},
\end{cases}
\end{align}
where $\chi_{\Omega}$ is the characteristic function of $\Omega\subset \mathbb{R}^{d}$, and
$$e(u)=\frac{1}{2}(\nabla u+(\nabla u)^{\mathrm{T}})$$
is the strain tensor.

Assume that the standard ellipticity condition holds for (\ref{Lame}), that is,
\begin{align*}
\mu>0,\quad d\lambda+2\mu>0,\quad \mu_1>0,\quad d\lambda_1+2\mu_1>0.
\end{align*}
For $\varphi\in C^{1,\gamma}(\partial D; \mathbb{R}^{d})$, it is well known that there exists a unique solution $u\in H^1(D; \mathbb{R}^{d})$ to the Dirichlet problem (\ref{Lame}), which is also the minimizer of the energy functional
$$J_1[u]:=\frac{1}{2}\int_\Omega \left((\chi_{\Omega}\mathbb{C}^0+\chi_{D_{1}\cup{D}_{2}}\mathbb{C}^1)e(u), e(u)\right)dx $$
on
\begin{align*}
H^1_\varphi(D; \mathbb{R}^{d}):=\left\{u\in  H^1(D; \mathbb{R}^{d})~\big|~ u-\varphi\in  H^1_0(D; \mathbb{R}^{d})\right\}.
\end{align*}

As mentioned previously, Li and Nirenberg \cite{ln} proved that $\nabla u$ is uniformly bounded with respect to $\varepsilon$. But, in high-contrast composite media, the concentration of $\nabla u$ is a very usual phenomenon when the distance $\varepsilon$ is sufficiently small. In order to investigate the role of $\varepsilon$ in such concentration phenomenon, let us assume that the Lam\'{e} constant in $D_{1}\cup D_{2}$   degenerates to infinite and consider this extreme case. To this end, we first introduce the linear space of rigid displacement in $\mathbb{R}^{d}$:
$$\Psi:=\{\psi\in C^1(\mathbb{R}^{d}; \mathbb{R}^{d})\ |\ \nabla\psi+(\nabla\psi)^{\mathrm{T}}=0\},$$
with a basis $\left\{\psi_{\alpha}~|~\alpha=1,2,\cdots,\frac{d(d+1)}{2}\right\}$, namely,
$$\left\{~e_{i},~x_{j}e_{k}-x_{k}e_{j}~\big|~1\leq\,i\leq\,d,~1\leq\,j<k\leq\,d~\right\},$$
where $e_{1},\cdots,e_{d}$ denote the standard basis of $\mathbb{R}^{d}$. For fixed $\lambda$ and $\mu$, denoting $u_{\lambda_1,\mu_1}$ as the solution of (\ref{Lame}), then we have \cite{bll1}
\begin{align*}
u_{\lambda_1,\mu_1}\rightarrow u\quad\hbox{in}\ H^1(D; \mathbb{R}^{d}),\quad \hbox{as}\ \min\{\mu_1, d\lambda_1+2\mu_1\}\rightarrow\infty,
\end{align*}
where $u$ is the unique $H^1(D; \mathbb{R}^{d})$ solution of
\begin{align}\label{maineqn}
\begin{cases}
\mathcal{L}_{\lambda, \mu}u:=\nabla\cdot(\mathbb{C}^0e(u))=0,\quad&\hbox{in}\ \Omega,\\
u|_{+}=u|_{-},&\hbox{on}\ \partial{D}_{i},i=1,2,\\
e(u)=0,&\hbox{in}~~D_{i},i=1,2,\\
\int_{\partial{D}_{i}}\frac{\partial u}{\partial \nu}\Big|_{+}\cdot\psi_{\alpha}=0,&i=1,2,\alpha=1,2,\cdots,\frac{d(d+1)}{2},\\
u=\varphi,&\hbox{on}\ \partial{D},
\end{cases}
\end{align}
where 
\begin{align*}
\frac{\partial u}{\partial \nu}\Big|_{+}&:=(\mathbb{C}^0e(u))\vec{n}=\lambda(\nabla\cdot u)\vec{n}+\mu(\nabla u+(\nabla u)^{\mathrm{T}})\vec{n},
\end{align*}
and $\vec{n}$ is the unit outer normal of $D_{i}$, $i=1,2$. Here and throughout this paper the subscript $\pm$ indicates the limit from outside and inside the domain, respectively. The existence, uniqueness and regularity of weak solutions to (\ref{maineqn}) can be referred to the Appendix of \cite{bll1}. We note that it suffices to consider the problem \eqref{maineqn} with $\varphi\in C^0(\partial D;\mathbb R^{d})$ replaced by $\varphi\in C^{1,\gamma}(\partial D;\mathbb R^{d})$. Indeed, it follows from the maximum principle \cite{mmn} that
$\|u\|_{L^{\infty}(D)}\leq C\|\varphi\|_{C^0(\partial D)}$.
Taking a slightly small domain $\widetilde D\subset\subset D$, then in view of the interior derivative estimates for Lam\'{e} system, we find that $\tilde\varphi:=u\big|_{\partial \widetilde D}$ satisfies
$$\|\tilde\varphi\|_{C^{1,\gamma}(\partial\widetilde D)}\leq C\|u\|_{L^{\infty}(D)}\leq C\|\varphi\|_{C^{0}(\partial D)}.$$
Without loss of generality, we assume that $\|\varphi\|_{C^{0}(\partial D)}=1$ by considering $u/\|\varphi\|_{C^{0}(\partial D)}$ if $\|\varphi\|_{C^{0}(\partial D)}>0$. If $\varphi\big|_{\partial D}=0$, then $u\equiv0$.

\subsection{Main difficulties and decomposition }\label{subsec main difficulty}

We first point out that problem \eqref{maineqn} has free boundary value feature. Although $e(u)=0$ implies $u$ in $\overline{D}_i$ is linear combination of $\psi_{\alpha}$,
\begin{equation}\label{Cialpha}
u=\sum_{\alpha=1}^{d(d+1)/2}C_i^\alpha \psi_{\alpha}\quad \text{in}~~ \overline{D}_i,
\end{equation}
these $d(d+1)$ constants $C_i^\alpha$ are free, which will be uniquely determined by $u$. We would like to emphasize that this is exactly the biggest difference with the conductivity model \cite{bly1}, where only two free constants need us to handle in any  dimension. It is the increase of the number of free contants that makes elastic problem quite difficult to deal with. Therefore, how to determine such many constants is one of main difficulties we need to solve. 

Our strategy in spirit follows from \cite{bll1,bll2}. First, by continuity of $u$ across $\partial D_{i}$, we can decompose the solution of \eqref{maineqn}, 
\begin{equation}\label{decom_u}
u(x)=\sum_{i=1}^{2}\sum_{\alpha=1}^{d(d+1)/2}C_i^{\alpha}v_{i}^{\alpha}(x)+v_{0}(x),\quad x\in\,\Omega ,
\end{equation}
where $v_{i}^{\alpha},v_{0}\in{C}^{2}(\Omega;\mathbb R^d)$, respectively, satisfying
\begin{equation}\label{equ_v1}
\begin{cases}
\mathcal{L}_{\lambda,\mu}v_{i}^{\alpha}=0,&\mathrm{in}~\Omega,\\
v_{i}^{\alpha}=\psi_{\alpha},&\mathrm{on}~\partial{D}_{i},\\
v_{i}^{\alpha}=0,&\mathrm{on}~\partial{D_{j}}\cup\partial{D},~j\neq i,
\end{cases}
\quad i=1,2,~\alpha=1,\cdots,d(d+1)/2,
\end{equation}
and
\begin{equation}\label{equ_v3}
\begin{cases}
\mathcal{L}_{\lambda,\mu}v_{0}=0,&\mathrm{in}~\Omega,\\
v_{0}=0,&\mathrm{on}~\partial{D}_{1}\cup\partial{D_{2}},\\
v_{0}=\varphi,&\mathrm{on}~\partial{D}.
\end{cases}
\end{equation}
So 
\begin{equation}\label{nabla u-1}
\nabla u(x)=\sum_{i=1}^{2}\sum_{\alpha=1}^{d(d+1)/2}C_i^{\alpha}\nabla v_{i}^{\alpha}(x)+\nabla v_{0}(x),\quad x\in\,\Omega.
\end{equation}
To investigate the symptotic behavior of $\nabla u$, we need both the asymptotic formulas of  $\nabla v_{i}^{\alpha}$ and the exact value of $C_{i}^{\alpha}$. To solve $C_{i}^{\alpha}$, from the fourth line in \eqref{maineqn} and the decomposition \eqref{decom_u}, we have the following linear system of these free constants $C_i^\alpha$,
\begin{equation}\label{equ-decompositon}
\sum_{i=1}^2\sum\limits_{\alpha=1}^{\frac{d(d+1)}{2}} C_i^\alpha \int_{\partial D_j} \frac{\partial v_i^\alpha}{\partial \nu} \Big| _+\cdot \psi_\beta +\int_{\partial D_j}\frac{\partial v_0}{\partial \nu}\Big|_+\cdot \psi_\beta =0,
\end{equation}
where $j=1,\, 2$, $\beta= 1, \, \cdots, \frac{d(d+1)}{2}$. But these coefficients are all boundary integrals. By integration by parts,
\begin{align}\label{def_aij}
a_{ij}^{\alpha\beta}:=-\int_{\partial{D}_{j}}\frac{\partial{v}_{i}^{\alpha}}{\partial\nu}\Big|_{+}\cdot\psi_{\beta}=\int_{\Omega}\big(\mathbb{C}^0e(v_{i}^{\alpha}),e(v_{j}^{\beta})\big)\ dx.
\end{align}
Therefore, in order to solve $C_{i}^{\alpha}$ from \eqref{equ-decompositon}, we have to calculate the energy integral on the right hand side of \eqref{def_aij}. This in turn needs to estimate $\nabla v_{i}^{\alpha}$. In fact, even if we can have the asymptotic formulas of $\nabla v_{i}^{\alpha}$, see Theorem \ref{coro v1} and Theorem \ref{coro v1d+1} below, it is still hard to solve every $C_{i}^{\alpha}$. It seems to be a mission impossible. To avoid this difficulty, we rewrite \eqref{nabla u-1} as
\begin{equation}\label{nablau_dec}
\nabla{u}=\sum_{\alpha=1}^{d(d+1)/2}\left(C_{1}^{\alpha}-C_{2}^{\alpha}\right)\nabla{v}_{1}^{\alpha}
+\nabla u_{b},\quad\mbox{in}~\Omega,
\end{equation}
where
\begin{equation}\label{def u_b}
u_{b}:=\sum_{\alpha=1}^{d(d+1)/2}C_{2}^{\alpha}(v_{1}^{\alpha}+v_{2}^{\alpha})+v_{0}.
\end{equation}
This is because the following boundedness estimates for $|\nabla(v_{1}^{\alpha}+v_{2}^{\alpha})|$ and $|\nabla v_{0}|$, together with the boundedness of $C_{2}^{\alpha}$, $\alpha=1,\cdots,d(d+1)/2$, makes $\nabla u_{b}$ is a ``good'' term, which has no singularity in the narrow region.
 
\begin{theorem}[\cite{jlx,llby} ]\label{auxi thm}
Let $v_{i}^{\alpha}$ and $v_{0}$ be defined in \eqref{equ_v1} and \eqref{equ_v3}, respectively, $i=1,2$. Then we have
$$\|\nabla(v_{1}^{\alpha}+v_{2}^{\alpha})\|_{L^{\infty}(\Omega)}\leq C,\quad \alpha=1,\cdots,d(d+1)/2,\quad \mbox{and}\quad \|\nabla v_{0}\|_{L^{\infty}(\Omega)}\leq C.$$
\end{theorem}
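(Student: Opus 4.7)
The plan is to reduce both estimates to a single assertion: if $W\in H^1(\Omega;\mathbb R^d)$ is a weak solution of the homogeneous Lam\'e system in $\Omega$ with $C^{1,\gamma}$ Dirichlet data on $\partial D$ and \emph{zero} Dirichlet data on $\partial D_1\cup\partial D_2$, then $\|\nabla W\|_{L^\infty(\Omega)}\le C$ with $C$ independent of $\varepsilon$. The function $v_0$ is already of this form. For $w_\alpha:=v_1^\alpha+v_2^\alpha$ I would first subtract the obvious rigid-motion extension: fix a cutoff $\eta\in C_c^\infty(D)$ with $\eta\equiv 1$ on a fixed $\varepsilon$-independent neighborhood of $\overline{D_1\cup D_2}$, and set $\Phi_\alpha:=\eta\,\psi_\alpha$. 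Because $\psi_\alpha$ is a rigid motion, $\mathcal L_{\lambda,\mu}\psi_\alpha=0$, so $\mathcal L_{\lambda,\mu}\Phi_\alpha$ is supported where $\nabla\eta\ne 0$, at positive distance from the narrow neck, and its $L^\infty$-norm is bounded uniformly in $\varepsilon$. Then $W_\alpha:=w_\alpha-\Phi_\alpha$ solves a Lam\'e system with zero Dirichlet data on all of $\partial\Omega$ and an inhomogeneous term supported away from the neck, which reduces to the one-line statement above after peeling off the piece solved on a fixed interior subdomain.

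The main obstacle is bounding $|\nabla W|$ inside the narrow neck, where the two facing boundaries of $D_1$ and $D_2$ pinch to distance $\varepsilon$. Away from the neck, $\partial\Omega$ is uniformly $C^{2,\gamma}$ and Schauder estimates for the Lam\'e system supply the bound. Inside the neck the system has no maximum principle, so the classical scalar barrier argument is unavailable, and a naive energy estimate gives only $\|\nabla W\|_{L^2(\Omega)}\lesssim \varepsilon^{-1/2}$, which is too weak. Following the energy-iteration scheme of \cite{bll1,bll2}, for each $t$ in the transverse $(d-1)$-plane with $|t|\le r_0$ I would let $\delta(t)$ denote the local half-height of the neck and work on the nested thin cylinders
\[
\Omega_s(t):=\{x\in\Omega:|x'-t|<s\,\delta(t)\},\qquad 1\le s\le 2.
\]
Applying Caccioppoli's inequality on a geometric sequence of such pairs $\Omega_{s_k}\subset\Omega_{s_{k+1}}$ and using crucially that $W$ vanishes on both facing surfaces, so the Poincar\'e constant along each vertical fiber is $\lesssim\delta(t)$, an iteration absorbs the constants and yields
\[
\int_{\Omega_1(t)}|\nabla W|^2\,dx\le C\,\delta(t)^{d},
\]
with $C$ independent of $\varepsilon$ and of $t$.

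To conclude I would rescale: set $\widetilde W(y):=W(t+\delta(t)y',\delta(t)y_d)/\delta(t)$. This maps $\Omega_1(t)$ to a unit-sized domain $\widetilde\Omega$ whose two facing boundary pieces are uniformly $C^{2,\gamma}$-regular graphs separated by a distance of order one, $\widetilde W$ still solves the homogeneous Lam\'e system with zero Dirichlet data on those pieces, and by the Poincar\'e inequality the bound above becomes $\|\widetilde W\|_{H^1(\widetilde\Omega)}\le C$. Boundary $W^{2,p}$-regularity for the Lam\'e system together with Sobolev embedding then gives $\|\nabla \widetilde W\|_{L^\infty(\widetilde\Omega)}\le C$, and unscaling returns $|\nabla W(x_0)|\le C$ at every $x_0\in\Omega_1(t)$, uniformly in $\varepsilon$. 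Applying this to both $W_\alpha$ and $v_0$, and combining with the interior estimates away from the neck, completes the proof of both inequalities of Theorem~\ref{auxi thm}.
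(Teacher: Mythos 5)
The paper does not prove Theorem~\ref{auxi thm} itself; it cites \cite{jlx,llby}, and the same energy--iteration technique reappears in the paper's own Appendix for Theorems~\ref{coro v1} and~\ref{coro v1d+1}. Your approach is consistent with that technique, and the reduction step is clean and correct: with $\eta\equiv 1$ near $\overline{D_1\cup D_2}$, the function $W_\alpha:=(v_1^\alpha+v_2^\alpha)-\eta\,\psi_\alpha$ vanishes on all of $\partial\Omega$ because $v_1^\alpha+v_2^\alpha=\psi_\alpha$ on both $\partial D_1$ and $\partial D_2$, and $\mathcal L_{\lambda,\mu}W_\alpha=-\mathcal L_{\lambda,\mu}(\eta\psi_\alpha)$ is supported in the fixed annulus $\{\nabla\eta\ne0\}$ away from the neck, with an $\varepsilon$-independent $L^\infty$ bound. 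This reduces both claims to a uniform gradient bound for Lam\'e solutions with zero data on $\partial D_1\cup\partial D_2$, which is exactly what the vertical-Poincar\'e plus Caccioppoli iteration is designed to deliver, and the final rescaling-plus-$W^{2,p}$ step is also correct.

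One point in the write-up would not survive scrutiny as stated: the cylinder family. With $\Omega_s(t):=\{|x'-t|<s\,\delta(t)\}$ restricted to $1\le s\le 2$, there is no room for a geometric sequence of nested pairs, and a single (or bounded number of) Caccioppoli step(s) only yields $F(1)\lesssim F(2)$ with no gain. The engine of the argument, as in \cite{bll1,bll2,llby} and in Step~3 of the paper's Appendix, is that the radii $t_i$ run from the local thickness scale all the way out to a fixed, $\varepsilon$-independent radius $R$ in increments comparable to $\delta(t)$ (resp. $|z'|^m$ in the outer case), giving $k\sim\delta(t)^{-1}R$ (resp. $k\sim|z'|^{-1}$) steps; it is precisely the resulting factor $(1/4)^k$ multiplying the globally bounded energy $F(R)\le C$ that produces $\int_{\Omega_{\delta(t)}(t)}|\nabla W|^2\le C\delta(t)^d$. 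So the range should be $1\le s\le R/\delta(t)$ (or an absolute radius $r\in[\delta(t),R]$), not $s\in[1,2]$. Since you cite the iteration scheme explicitly, this reads as a notational slip rather than a conceptual gap, but it should be corrected, because the conclusion you assert would simply not follow from what is written.
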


Thus, we reduce the establishment of the asymptotics of $\nabla u$ to that of the asymptotics of $\nabla v_{1}^{\alpha}$, $\alpha=1,\cdots,d(d+1)/2$, and to solving $C_{1}^{\alpha}-C_{2}^{\alpha}$, $\alpha=1,\cdots,d(d+1)/2$. These are two main difficulties that we need to solve in this paper. For the former, we separate all singular terms of $\nabla v_{1}^{\alpha}$, up to constant terms, by using a family of improved auxiliary functions, which depend on the parameters of Lam\'{e} system and the geometry informations of $\partial D_{1}$ and $\partial D_{2}$. This essentially improves the results in  \cite{bll1,bll2}, where only estimates of $|\nabla v_{i}^{\alpha}|$ are obtained. For the latter, we need to characterize the coefficients $a_{ij}^{\alpha\beta}$ to solve the big linear system generated by \eqref{nablau_dec}, and the convergence of $\int_{\partial D_{1}}\frac{\partial u_{b}}{\partial \nu}\Big|_{+}\cdot\psi_{\beta}$, $\beta=1,\cdots,d(d+1)/2$; see Proposition \ref{prop converge b}. Finally, we remark that one crucial step in proving the asymptotics of $C_{1}^{\alpha}-C_{2}^{\alpha}$ is to derive the asymptotic expression of $a_{11}^{\alpha\alpha}$,  which is of independent interest, see Proposition \ref{prop a11} and Remark \ref{rmk a11} below. To state it precisely, we further  fix our domain and notations.

\subsection{Further assumptions on domain and construction of finer auxiliary functions}

Recalling the assumptions about $D_{1}$ and $D_{2}$, there exist two points $P_{1}\in\partial D_{1}$ and $P_{2}\in\partial D_{2}$, respectively, such that
$$\mbox{dist}(P_{1},P_{2})=\mbox{dist}(\partial D_{1},\partial D_{2})=\varepsilon.$$
By a translation and rotation of coordinates, if necessary, we suppose that
$$P_{1}=(0',\frac{\varepsilon}{2})\in\partial D_{1},\quad P_{2}=(0',-\frac{\varepsilon}{2})\in\partial D_{2}.$$
Now we further assume that there exits a constant $R$, independent of $\varepsilon$, such that the portions of $\partial D_{1}$ and $\partial D_{2}$ near $P_{1}$ and $P_{2}$, respectively, can be represented by
\begin{align*}
x_{d}=\frac{\varepsilon}{2}+h_{1}(x')\quad\mbox{and}\quad x_{d}=-\frac{\varepsilon}{2}+h_{2}(x'),\quad\mbox{for}~|x'|<2R.
\end{align*}
Suppose that the convexity of $\partial D_{1}$ and $\partial D_{2}$ is of order $m\geq 2$ ($m\in\mathbb N^{+}$) near the origin,
\begin{align}\label{convexity}
h_{i}(x')=
\begin{cases}
(-1)^{i+1}\kappa_{i}|x'|^{2}+O(|x'|^{2+\gamma}),&\quad m=2,\\
(-1)^{i+1}\kappa_{i}|x'|^{m}+O(|x'|^{m+1}),&\quad m\geq3,
\end{cases} 
\quad\mbox{for}~i=1,2,
\end{align}
and
\begin{equation}\label{h1h3}
\|h_{1}\|_{C^{2,\gamma}(B'_{2R})}+\|h_{2}\|_{C^{2,\gamma}(B'_{2R})}\leq C,
\end{equation}
where $\kappa_{1}$, $\kappa_{2}$, and $C$ are constants independent of $\varepsilon$. We call these inclusions \emph{$m$-convex inclusions}. For simplicity, we assume that $\kappa_{1}=\kappa_{2}=\dfrac{\kappa}{2}$.
For $0<r\leq\,2R$, set the narrow region between $\partial{D}_{1}$ and $\partial{D}_{2}$ as
\begin{equation*}
\Omega_r:=\left\{(x',x_{d})\in \mathbb{R}^{d}~\big|~-\frac{\varepsilon}{2}+h_{2}(x')<x_{d}<\frac{\varepsilon}{2}+h_{1}(x'),~|x'|<r\right\}.
\end{equation*}

By the standard theory for elliptic systems, we have
\begin{equation}\label{mv1--bounded1}
\|\nabla v_{i}^\alpha(x)\|_{L^{\infty}(\Omega\setminus\Omega_{R})}\leq\,C,\quad \alpha=1,\cdots,\frac{d(d+1)}{2},~i=1,2.
\end{equation}
Therefore, in the following we only need to deal with the problems in $\Omega_{R}$. To this end, we denote
$$\delta(x'):=\varepsilon+h_{1}(x')-h_{2}(x'),$$
and introduce a scalar auxiliary function $\bar{u}\in C^2(\mathbb{R}^d)$ such that 
\begin{equation}\label{vvd}
\bar{u}(x)=\frac{x_{d}+\frac{\varepsilon}{2}-h_2(x')}{\delta(x')},\quad\hbox{in}\ \Omega_{2R},
\end{equation}
$\bar{u}=1$ on $\partial{D}_{1}$, $\bar{u}=0$ on $\partial{D}_{2}\cup\partial{D}$, and $\|\bar{u}\|_{C^{2}(\Omega\setminus\Omega_{R})}\leq\,C$.
Next we use the function $\bar{u}$ to generate a family of vector-valued auxiliary functions $u_{1}^{\alpha}$, $\alpha=1,\cdots,d$, which is much finer than before used in \cite{bll1,bll2}.

For $d=2$, recalling that 
$$\psi_{1}=\begin{pmatrix}
1\\
0
\end{pmatrix}\quad \mbox{and} \quad\psi_{2}=\begin{pmatrix}
0\\
1
\end{pmatrix},$$ we define $u_{1}^{\alpha}\in C^{2}(\Omega)$ such that $u_{1}^{\alpha}=v_{1}^{\alpha}$ on $\partial\Omega$,  and, in $\Omega_{2R}$
\begin{equation}\label{auxiliary improved}
\begin{split}
u_{1}^{1}&:=\bar{u}_{1}^{1}+\tilde{u}_{1}^{1}:=
\bar{u}\psi_{1}
+\frac{\lambda+\mu}{\lambda+2\mu}f(\bar{u})\, \delta'(x_{1})\,\psi_{2},\\
u_{1}^{2}&:=\bar{u}_{1}^{2}+\tilde{u}_{1}^{2}:=
\bar{u}\psi_{2}
+\frac{\lambda+\mu}{\mu}f(\bar{u})\,\delta'(x_{1})\,\psi_{1},
\end{split}
\end{equation}
where $f(\bar{u}):=\dfrac{1}{2}\Big(\bar{u}-\frac{1}{2}\Big)^{2}-\dfrac{1}{8}$, $f(\bar{u})=0$ on $(\partial D_{1}\cup\partial D_{2})\cap\{|x_{1}|<R\}$, and  $\|u_{1}^{\alpha}\|_{C^{2}(\Omega\setminus\Omega_{R})}\leq C$. 

For $d=3$, noting that $$\psi_{1}=\begin{pmatrix}
1\\
0\\
0
\end{pmatrix},\quad\psi_{2}=\begin{pmatrix}
0\\
1\\
0
\end{pmatrix}, \quad\mbox{and}\quad \psi_{3}=\begin{pmatrix}
0\\
0\\
1
\end{pmatrix},$$ we define $u_{1}^{\alpha}\in C^{2}(\Omega)$ such that, in $\Omega_{2R}$
\begin{equation}\label{auxiliary improved dim3}
\begin{split}
u_{1}^{\alpha}&:=\bar{u}_{1}^{\alpha}+\tilde{u}_{1}^{\alpha}:=\bar{u}\psi_{\alpha}
+\frac{\lambda+\mu}{\lambda+2\mu}f(\bar{u})\, \partial_{x_{\alpha}}\delta\,\psi_{3},\qquad\qquad\alpha=1,2,\\
u_{1}^{3}&:=\bar{u}_{1}^{3}+\tilde{u}_{1}^{3}:=\bar{u}\psi_{3}
+\frac{\lambda+\mu}{\mu}f(\bar{u})\, (\partial_{x_{1}}\delta\,\psi_{1}+\partial_{x_{2}}\delta\,\psi_{2}).
\end{split}
\end{equation}
Notice that the corrected terms $\tilde{u}_{1}^{\alpha}$ depend on the Lam\'{e} parameters $\lambda$ and $\mu$, which can help us capture all singular terms of $\nabla v_{i}^{\alpha}$,  $\alpha=1,\cdots,d$.

\subsection{Asymptotic expression of $\nabla v_{1}^{\alpha}$, $\alpha=1,\cdots,d$.}
We now use these auxiliary functions $u_{1}^{\alpha}$ to obtain the asymptotics of $\nabla v_{1}^{\alpha}$ in the narrow region $\Omega_{R}$,  $\alpha=1,\cdots,d$. 

\begin{theorem}\label{coro v1}
For $d=2,3$, let $v_{1}^{\alpha}\in{H}^1(\Omega; \mathbb{R}^{d})$ be the weak solutions of \eqref{equ_v1}. Then for sufficiently small  $0<\varepsilon<1/2$, we have
	\begin{align}\label{mv1-bounded1}
	\nabla v_{1}^\alpha(x)=\nabla u_{1}^{\alpha}+O(1),\quad\alpha=1,\cdots,d,\quad x\in\Omega_{R}.
	\end{align}
\end{theorem}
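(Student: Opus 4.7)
The strategy is a subtraction-of-singularity argument. Set $w_1^\alpha := v_1^\alpha - u_1^\alpha$. Since $u_1^\alpha = v_1^\alpha$ on $\partial\Omega$ by construction, $w_1^\alpha$ vanishes on $\partial\Omega$ and satisfies
\[
\mathcal{L}_{\lambda,\mu} w_1^\alpha = -\mathcal{L}_{\lambda,\mu} u_1^\alpha \quad\text{in } \Omega.
\]
Because $\|\nabla u_1^\alpha\|_{L^\infty(\Omega\setminus\Omega_R)}\le C$ by the construction of $u_1^\alpha$ outside $\Omega_R$ together with \eqref{mv1--bounded1}, the target estimate \eqref{mv1-bounded1} is equivalent to the pointwise bound $|\nabla w_1^\alpha|\le C$ inside $\Omega_R$.

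The first and most delicate step is to show that the source $\mathcal{L}_{\lambda,\mu} u_1^\alpha$ is tame enough to drive the energy iteration below. A direct expansion shows that $\mathcal{L}_{\lambda,\mu}(\bar u\psi_\alpha)$ contains the characteristic $\delta(x')^{-2}$ singularity generated by $\partial_{x_d}^2\bar u$, together with a cross-derivative contribution of the same order whose principal component lies transverse to $\psi_\alpha$ (in the $\psi_d$-direction when $\alpha\neq d$, and in a $\psi_j$-direction via $(\lambda+\mu)\nabla\Div$ when $\alpha=d$). The correction $\tilde u_1^\alpha$ is engineered so that, using $\partial_{x_d}\bar u = 1/\delta$, $f''\equiv 1$, and $\partial_{x_d}f(\bar u) = f'(\bar u)/\delta$, the contribution $\mu\Delta\tilde u_1^\alpha + (\lambda+\mu)\nabla\Div\tilde u_1^\alpha$ exactly annihilates this principal $\delta^{-2}$ mode; the ratios $\tfrac{\lambda+\mu}{\lambda+2\mu}$ and $\tfrac{\lambda+\mu}{\mu}$ appearing in \eqref{auxiliary improved}--\eqref{auxiliary improved dim3} are precisely the values forced by the Lam\'e structure for the longitudinal and transverse cancellations respectively. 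What remains of $\mathcal{L}_{\lambda,\mu} u_1^\alpha$ is a residual dominated by $|x'|$-weighted factors (arising from derivatives of $h_1,h_2$ and of $\delta$), small enough to yield a bounded rather than mildly singular bound for $\nabla w_1^\alpha$.

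With the sharp source bound in hand, I would run the standard Caccioppoli / energy-iteration scheme of \cite{bll1,bll2}. Fix $|z'|<R$ and, for $0<t<s$ comparable to $\delta(z')$, take a cutoff $\eta$ equal to $1$ on $\{|x'-z'|<t\}\cap\Omega$, vanishing outside $\{|x'-z'|<s\}\cap\Omega$, with $|\nabla\eta|\le C/(s-t)$. Testing the equation for $w_1^\alpha$ against $\eta^2 w_1^\alpha$ and invoking the second Korn inequality gives
\[
\int|\nabla w_1^\alpha|^2\eta^2 \le \frac{C}{(s-t)^2}\int|w_1^\alpha|^2 + C\int|\mathcal{L}_{\lambda,\mu} u_1^\alpha|\,|w_1^\alpha|,
\]
which, together with the uniform $L^\infty$ bound $\|w_1^\alpha\|_{L^\infty(\Omega)}\le C$ inherited from Li--Nirenberg \cite{ln} and iterated on a dyadic sequence of radii down to $t\sim\delta(z')$, produces local $L^2$ control of $\nabla w_1^\alpha$. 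Rescaling each narrow cylinder of radius $\delta(z')$ above $(z',0)$ to unit size and invoking interior Schauder / $W^{2,p}$ estimates for the rescaled Lam\'e system then upgrade the $L^2$ control to the pointwise bound $|\nabla w_1^\alpha|\le C$ on $\Omega_R$. The decisive obstacle is the first step: carrying out the intricate but explicit algebra that confirms the cancellation of the $\delta^{-2}$ principal part in $\mathcal{L}_{\lambda,\mu} u_1^\alpha$; once this identity is secured, the energy iteration and rescaling are routine adaptations of the methodology of \cite{bll1,bll2} to the refined auxiliary function.
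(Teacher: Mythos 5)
Your proposal follows essentially the same route as the paper's Appendix: the decomposition $w_1^\alpha=v_1^\alpha-u_1^\alpha$, the verification that the corrector $\tilde u_1^\alpha$ cancels the principal $\delta^{-2}$ part of $\mathcal{L}_{\lambda,\mu}\bar u_1^\alpha$ (this is exactly the identity $(\lambda+\mu)\partial_{x_1x_2}(\bar u_1^1)^1+(\lambda+2\mu)\partial_{x_2x_2}(\tilde u_1^1)^2=0$ in Step 1 of the paper's proof), the Caccioppoli-type energy iteration of \cite{bll1,bll2}, and a final rescaling plus local elliptic estimates to pass from $L^2$ to pointwise bounds. One step of your iteration is wrong as literally written: the contraction cannot come from the uniform bound $\|w_1^\alpha\|_{L^\infty(\Omega)}\le C$. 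Inserting that bound into the Caccioppoli inequality only gives $\frac{C}{(s-t)^2}\int_{\Omega_s(z')}|w_1^\alpha|^2\lesssim \frac{s^{d-1}\delta(z')}{(s-t)^2}$, which cannot be made $\lesssim\delta(z')^{d}$ for any choice of radii, so the local energy decay needed for the rescaled pointwise estimate would not follow. What actually drives the iteration is the thin-direction Poincar\'e inequality $\int_{\Omega_s(z')}|w_1^\alpha|^2\le C\,\delta^2(z')\int_{\Omega_s(z')}|\nabla w_1^\alpha|^2$, valid because $w_1^\alpha$ vanishes on $\partial D_1\cup\partial D_2$; this converts the Caccioppoli inequality into $F(t)\le\bigl(\tfrac{c\,\delta(z')}{s-t}\bigr)^2F(s)+\dots$ and one iterates with arithmetic step $s-t\sim\delta(z')$ (not dyadically), starting from the global energy bound $\int_\Omega|\nabla w_1^\alpha|^2\le C$, which your outline also omits but which is needed to initialize the iteration. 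With these two substitutions your argument coincides with the paper's.
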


Here we would like to emphasize the importance of the introduction of $\tilde{u}_{1}^{\alpha}$. Although $\bar{u}_{1}^{\alpha}$ can be used to obtain the upper bound estimates of $|\nabla v_{1}^{\alpha}|$ by $|\nabla(v_{1}^{\alpha}-\bar{u}_{1}^{\alpha})|\leq\frac{C}{\sqrt{\varepsilon}}$ derived in \cite[Corollary 5.2]{hl}, it as well shows to us it is possible that there remains more other singular terms in $\nabla(v_{1}^{\alpha}-\bar{u}_{1}^{\alpha})$. As it turns out, the appearance of $\nabla\tilde{u}_{1}^{\alpha}$ can find all the singular terms of $\nabla v_{1}^{\alpha}$ and make $|\nabla(v_{1}^{\alpha}-\bar{u}_{1}^{\alpha}-\tilde{u}_{1}^{\alpha})|$ be bounded. The proof is an adaption of the iteration technique with respect to the energy which was first used  in \cite{llby} and further developed  in \cite{bll1,bll2} to obtain the gradient estimates. We leave it to the Appendix.

The asymptotic expression \eqref{mv1-bounded1} is an essential improvement of the estimate $|\nabla(v_{1}^{\alpha}-\bar{u}_{1}^{\alpha})|\leq\frac{C}{\sqrt{\varepsilon}}$. More importantly, it also allows us to obtain the asymptotics of $a_{11}^{\alpha\alpha}$, $\alpha=1,\cdots,d$, defined in  \eqref{def_aij}. This kind of formulas is also the key to establish the asymptotics of $C_{1}^{\alpha}-C_{2}^{\alpha}$, $\alpha=1,\cdots,d$. We here give the results for $m=2$. For the general cases $m\geq3$, see Section \ref{proof general} below.

\begin{prop}\label{prop a11}[The asymptotics of $a_{11}^{\alpha\alpha}$]
Under the assumptions \eqref{convexity} and \eqref{h1h3} with $m=2$, we have, for sufficiently small $0<\varepsilon<1/2$,
	
(i) for $d=2$, 
\begin{equation*}
a_{11}^{11}=\frac{\pi\mu}{\sqrt{\kappa}}\cdot\frac{1}{\sqrt{\varepsilon}}\left(1+O(\varepsilon^{\frac{\gamma+3}{8}})\right)\quad \mbox{and}\quad a_{11}^{22}=\frac{\pi(\lambda+2\mu)}{\sqrt{\kappa}}\cdot\frac{1}{\sqrt{\varepsilon}}\left(1+O(\varepsilon^{\frac{\gamma+3}{8}})\right);
	\end{equation*}
	
	(ii) for $d=3$, there exist constants $\mathcal{C}_{3}^{*\alpha}$ and $\mathcal{C}_{3}^{*3}$, independent of $\varepsilon$, such that for $\alpha=1,2$,
	\begin{equation*}
	a_{11}^{\alpha\alpha}=
	\frac{\pi\mu}{\kappa}|\log\varepsilon|+\mathcal{C}_{3}^{*\alpha}+O(\varepsilon^{1/8})\quad
	\mbox{and}\quad
	a_{11}^{33}=
	\frac{\pi(\lambda+2\mu)}{\kappa}|\log\varepsilon|+\mathcal{C}_{3}^{*3}+O(\varepsilon^{1/8}).
	\end{equation*}
\end{prop}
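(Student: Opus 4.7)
The plan is to start from the energy representation
$$a_{11}^{\alpha\alpha}=\int_{\Omega}\bigl(\mathbb{C}^0 e(v_1^\alpha),e(v_1^\alpha)\bigr)\,dx$$
and split $\Omega=(\Omega\setminus\Omega_R)\cup\Omega_R$. On $\Omega\setminus\Omega_R$ the bound \eqref{mv1--bounded1} already gives a uniform $O(1)$ contribution, so the entire singular behaviour is carried by $\int_{\Omega_R}$. By Theorem \ref{coro v1}, $\nabla v_1^\alpha=\nabla u_1^\alpha+O(1)$ pointwise in $\Omega_R$; setting $w:=v_1^\alpha-u_1^\alpha$ and expanding the quadratic form reduces the proof to (i) computing the leading integral $\int_{\Omega_R}(\mathbb{C}^0 e(u_1^\alpha),e(u_1^\alpha))\,dx$ explicitly, and (ii) controlling the cross term $\int_{\Omega_R}(\mathbb{C}^0 e(u_1^\alpha),e(w))\,dx$ and the residual $\int_{\Omega_R}(\mathbb{C}^0 e(w),e(w))\,dx$.

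For the leading term, decompose $u_1^\alpha=\bar u_1^\alpha+\tilde u_1^\alpha$ as in \eqref{auxiliary improved}--\eqref{auxiliary improved dim3}. A direct computation of $e(\bar u_1^\alpha)=\tfrac12(\nabla\bar u\otimes\psi_\alpha+\psi_\alpha\otimes\nabla\bar u)$, together with the fact that only $\partial_{x_d}\bar u\sim1/\delta(x')$ is singular while $\partial_{x_j}\bar u$ ($j<d$) is bounded, yields the pointwise identity
$$\bigl(\mathbb{C}^0 e(\bar u_1^\alpha),e(\bar u_1^\alpha)\bigr)=c_\alpha\,(\partial_{x_d}\bar u)^2+O(1),\qquad c_\alpha=\begin{cases}\mu,&\alpha<d,\\ \lambda+2\mu,&\alpha=d.\end{cases}$$
The correction $\tilde u_1^\alpha$ carries the additional factor $\partial_{x_\alpha}\delta(x')\,f(\bar u)$ which vanishes linearly in $|x'|$ at the origin and vanishes identically on $\partial D_1\cup\partial D_2$; a short computation then gives $\|\nabla\tilde u_1^\alpha\|_{L^2(\Omega_R)}=O(1)$, so it does not contribute to the leading order. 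Fubini in $x_d$ (the vertical height of $\Omega_R$ being $\delta(x')$) collapses the leading integral to
$$\int_{|x'|<R}\frac{dx'}{\delta(x')},\qquad \delta(x')=\varepsilon+\kappa|x'|^{2}+O(|x'|^{2+\gamma}),$$
whose leading asymptotics are $\pi/\sqrt{\kappa\varepsilon}$ in $d=2$ and, passing to polar coordinates, $(\pi/\kappa)|\log\varepsilon|+\text{finite}$ in $d=3$; the finite part, together with the $\Omega\setminus\Omega_R$ piece and the subleading contributions of $\tilde u_1^\alpha$, defines the constant $\mathcal{C}_3^{*\alpha}$.

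The main obstacle is sharpening the error produced by $w$ to the advertised relative rates $O(\varepsilon^{(\gamma+3)/8})$ in 2D and $O(\varepsilon^{1/8})$ in 3D. A naive Cauchy--Schwarz of the cross term against $\|e(u_1^\alpha)\|_{L^2(\Omega_R)}=O(\varepsilon^{-1/4})$ combined with the pointwise $|e(w)|=O(1)$ only gives absolute error $O(\varepsilon^{-1/4})$, which is too weak. To gain, I would split $\Omega_R=\Omega_\rho\cup(\Omega_R\setminus\Omega_\rho)$ at an intermediate scale $\rho=\varepsilon^{a}$: on the tip region $\Omega_\rho$ use a weighted energy estimate for $w$, which solves $\mathcal{L}_{\lambda,\mu}w=-\mathcal{L}_{\lambda,\mu}u_1^\alpha$ with a right-hand side whose size inherits the $|x'|^{\gamma}$ remainder from \eqref{convexity}; on the annular region $\Omega_R\setminus\Omega_\rho$ the separation $\delta\gtrsim\rho^{2}$ makes every integrand bounded and direct integration suffices. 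Optimising $a$ and tracking the H\"older exponent $\gamma$ entering $\mathcal{L}_{\lambda,\mu}u_1^\alpha$ produces the exponent $(\gamma+3)/8$ in 2D, while in 3D the slower logarithmic divergence allows a coarser balance leading to the $1/8$ in the 3D error.
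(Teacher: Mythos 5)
Your reduction to the explicit integral of $u_1^\alpha$ plus error terms is the right starting point, and for $d=2$ it can be made to work: since the statement there only claims a \emph{relative} error against a leading term of size $\varepsilon^{-1/2}$, an absolute $O(1)$ (or even $O(\varepsilon^{(\gamma-1)/8})$) remainder suffices, and the cross term $\int_{\Omega_R}(\mathbb{C}^0e(u_1^\alpha),e(w))\,dx$ is already $O(1)$ by the trivial pointwise bound $|e(u_1^\alpha)|\le C/\delta(x')$, $|e(w)|\le C$, and $\int_{\Omega_R}\delta(x')^{-1}dx=\int_{|x'|<R}dx'=O(1)$ --- your worry about Cauchy--Schwarz giving only $O(\varepsilon^{-1/4})$ is misplaced, and the exponent $(\gamma+3)/8$ comes not from the PDE error for $w$ but from the geometric remainder $O(|x'|^{2+\gamma})$ in $\delta(x')$ when the explicit integral is evaluated on $|x'|<\varepsilon^{1/8}$.

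The genuine gap is in $d=3$. There the statement asserts an \emph{additive} expansion $a_{11}^{\alpha\alpha}=\frac{\pi\mu}{\kappa}|\log\varepsilon|+\mathcal{C}_3^{*\alpha}+O(\varepsilon^{1/8})$ with $\mathcal{C}_3^{*\alpha}$ independent of $\varepsilon$, and your proposal never produces such a constant: the quantities you fold into it --- the integral over $\Omega\setminus\Omega_R$, the cross and residual terms involving $w$, and the finite part of the polar-coordinate integral --- are all bounded but all depend on $\varepsilon$ (through $\Omega$ and $v_1^\alpha$), so your argument only yields $\frac{\pi\mu}{\kappa}|\log\varepsilon|+O(1)$. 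The paper closes this by comparing $v_1^\alpha$ with the solution $v_1^{*\alpha}$ for the \emph{touching} configuration ($\varepsilon=0$): Lemma \ref{lem difference v11} gives $|v_1^\alpha-v_1^{*\alpha}|=O(\varepsilon^{1/2})$ away from the neck, interpolation with the $C^2$ bound upgrades this to $|\nabla(v_1^\alpha-v_1^{*\alpha})|=O(\varepsilon^{1/4})$ (with a weight $|x'|^{-2}$ in the intermediate annulus, obtained by rescaling), and consequently the energies over $\Omega\setminus\Omega_R$ and over $\Omega_R\setminus\Omega_{\varepsilon^{1/8}}$ converge to the genuinely $\varepsilon$-independent constants $M_1^*$, $M_2^*$ at rate $\varepsilon^{1/8}$; only on the tiny core $\Omega_{\varepsilon^{1/8}}$ is $u_1^\alpha$ used as the approximant, where the cross term is $O(\varepsilon^{1/8})$ simply because $|B'_{\varepsilon^{1/8}}|$ is small. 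Your intermediate-scale splitting with weighted energy estimates for $w$ addresses the wrong difficulty and, without the limit configuration, cannot identify the constant; you would need to add the analogue of Lemma \ref{lem difference v11} and the interpolation/rescaling step to complete the three-dimensional case.
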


\begin{remark}\label{rmk a11}
Here we would point out that if we only use $\bar{u}_{1}^{\alpha}$ as the auxiliary function like in \cite{bll1}, it is also not possible to obtain these asymptotic formula for $a_{11}^{\alpha\alpha}$. The details can be found in the proof of Proposition \ref{prop a11}, see Section \ref{sec pf prop a11 b} below. So the introduction of $\tilde{u}_{1}^{\beta}$ is an essential improvement. Its advantage will be also shown in the calculation of other integrals in later sections, for instance, to calculate the global effective elastic property of a periodic composite containing $m$-convex inclusions. This relates the ``Vigdergauz microstructure'' in the shape optimation of fibers. For more details, see Section  \ref{application}.
\end{remark}

\begin{remark}
As we know, in electrostatics, the condenser capacity of $\partial D_1$ relative to $\partial(D\setminus D_{2})$ is given by
$$\mbox{Cap}(D_{1}):=-\int_{\partial D_{1}}\frac{\partial v_{1}}{\partial\nu}=\int_{\Omega}|\nabla v_{1}|^{2}\ dx,$$
where $v_{1}\in{C}^{2}(\Omega)$ satisfies
\begin{equation*}
\begin{cases}
\Delta v_{1}=0,&\mathrm{in}~\Omega,\\
v_{1}=1,&\mathrm{on}~\partial{D}_{1},\\
v_{1}=0,&\mathrm{on}~\partial{D_{2}}\cup\partial{D}.
\end{cases}
\end{equation*}
Henthforth, in this sense $a_{11}^{\alpha\alpha}$ is an ``elasticity capacity'' of $\partial D_{1}$ relative to $\partial(D\setminus D_{2})$.
\end{remark}

For $\alpha=d+1,\cdots,d(d+1)/2$, we use the auxiliary functions as in \cite{bll1,bll2}, that is,
\begin{align}\label{auxi fun d+1}
u_{1}^{\alpha}:=\bar{u}\psi_{\alpha}.
\end{align}
\begin{theorem}\label{coro v1d+1}({\cite{bll1,bll2}})
For $d=2,3$, let $v_{1}^{\alpha}\in{H}^1(\Omega; \mathbb{R}^{d})$ be the weak solutions of \eqref{equ_v1}. Then for sufficiently small  $0<\varepsilon<1/2$, we have
	\begin{align*}
	\nabla v_{1}^\alpha(x)=\nabla u_{1}^{\alpha}+O(1),\quad\alpha=d+1,\cdots,d(d+1)/2,\quad x\in\Omega_{R}.
	\end{align*}
\end{theorem}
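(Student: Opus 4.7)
The plan is to adapt the iteration-on-energy argument developed for the gradient estimates in \cite{llby,bll1,bll2} (and carried out in the paper's Appendix to prove Theorem \ref{coro v1}), but working with the simpler auxiliary function $u_{1}^{\alpha}:=\bar u\,\psi_{\alpha}$. The crucial observation is that for $\alpha=d+1,\ldots,d(d+1)/2$ the rigid motion $\psi_{\alpha}(x)=x_{j}e_{k}-x_{k}e_{j}$ is a rotation, so $|\psi_{\alpha}(x)|\leq C(|x'|+\varepsilon)$ throughout $\Omega_{R}$. This extra smallness compensates for the $1/\delta(x')$ singularity of $\nabla\bar u$ and means that no further correction like the $\tilde u_{1}^{\alpha}$ of \eqref{auxiliary improved} is required.

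Set $w:=v_{1}^{\alpha}-u_{1}^{\alpha}$. Since $\bar u=1$ on $\partial D_{1}$ and $\bar u=0$ on $\partial D_{2}\cup\partial D$, the function $w\in H^{1}_{0}(\Omega)$ and solves
\begin{equation*}
\mathcal{L}_{\lambda,\mu}w=-\mathcal{L}_{\lambda,\mu}(\bar u\,\psi_{\alpha})\quad\text{in }\Omega.
\end{equation*}
Using $e(\psi_{\alpha})=0$, one computes $e(u_{1}^{\alpha})=\tfrac{1}{2}(\nabla\bar u\otimes\psi_{\alpha}+\psi_{\alpha}\otimes\nabla\bar u)$, so $|e(u_{1}^{\alpha})|\leq C(|x'|+\varepsilon)/\delta(x')$ in $\Omega_{R}$. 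The iteration proceeds in three moves: (i) a Caccioppoli-type local energy inequality on nested strips $\Omega_{t}\subset\Omega_{t+s}$, obtained by testing the equation against $\eta^{2}w$ with $\eta$ a cutoff in $x'$ and invoking the first Korn inequality; (ii) an iteration over dyadic scales $t_{k}\searrow\rho_{\varepsilon}$ (where $\rho_{\varepsilon}\sim\varepsilon^{1/m}$ is the natural width of the bottleneck), which produces a sharp weighted bound $\int_{\Omega_{t}}|\nabla w|^{2}\leq C\,t^{d-1}\delta(t)$; (iii) a rescaling around an arbitrary point $z_{0}\in\Omega_{R}$ by the gap $\delta(z_{0}')$, so that interior $W^{1,\infty}$ estimates for the Lam\'e system on a domain of unit size give $|\nabla w(z_{0})|\leq C$. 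Combined with the trivial bound \eqref{mv1--bounded1} on $\Omega\setminus\Omega_{R}$, this yields the asserted $\nabla v_{1}^{\alpha}=\nabla u_{1}^{\alpha}+O(1)$.

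The main obstacle, and the reason this is nontrivial even with the favourable factor $|\psi_{\alpha}|=O(|x'|+\varepsilon)$, is step (ii): for $m\geq3$ (and especially when $m>d$) the global $L^{2}$-bound on $e(u_{1}^{\alpha})$ is not uniform in $\varepsilon$, so the Caccioppoli iteration has to be carried out scale-by-scale with the weighted quantity $(|x'|+\varepsilon)^{2}/\delta(x')^{2}$ as source, exactly paralleling the bootstrap used in \cite{bll2} for $\alpha\leq d$. Once this weighted iteration is set up, the extra power of $(|x'|+\varepsilon)$ that distinguishes rotations from translations closes the estimate at the right scaling without any corrector, giving the $O(1)$ remainder and hence \eqref{mv1-bounded1} for $\alpha=d+1,\ldots,d(d+1)/2$.
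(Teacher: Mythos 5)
Your proposal is correct and follows essentially the same route as the paper's Appendix, which proves Theorems \ref{coro v1} and \ref{coro v1d+1} together: the pointwise bound $|\mathcal{L}_{\lambda,\mu}(\bar u\psi_\alpha)|\leq C/\delta(x')$ coming from the smallness of the components of $\psi_\alpha$ in $\Omega_R$, a global energy bound for $w=v_1^\alpha-u_1^\alpha$, the Caccioppoli-plus-thin-direction-Poincar\'e iteration on the two scales $|z'|\lesssim\varepsilon^{1/m}$ and $|z'|\gtrsim\varepsilon^{1/m}$, and a final rescaling with local $W^{1,\infty}$ estimates. The only cosmetic differences are that the paper iterates over arithmetic rather than dyadic scales and states the local energy bound as $\int_{\Omega_{\delta}(z')}|\nabla w|^2\leq C\delta^d(z')$.
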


We remark that Theorems \ref{coro v1} and \ref{coro v1d+1} also hold true for $v_{2}^{\alpha}$ by replacing $\bar{u}$ with $\underline{u}$, where $\underline{u}\in C^2(\mathbb{R}^d)$ is a scalar function satisfying $\underline{u}=1$ on $\partial{D}_{2}$, $\underline{u}=0$ on $\partial{D}_{1}\cup\partial D$, $\underline{u}=1-\bar{u}$ in $\Omega_{2R}$, and $\|\underline{u}\|_{C^{2}(\Omega\setminus\Omega_{R})}\leq\,C$. In this case we denote the auxiliary functions by $u_{2}^{\alpha}$.

Throughout the paper, unless otherwise stated, we use $C$ to denote some positive constant, whose values may vary from line to line, depending only on $d, \kappa_0, R$, and an upper bound of the $C^{2,\gamma}$ norms of $\partial D_{1}, \partial D_{2}$, and $\partial D$, but not on $\varepsilon$. We call a constant having such dependence a {\it universal constant}.

\section{Main results}\label{sec mainresults}

In this section, we state our main theorems. First, in Subsection \ref{subsec 2-convex}, for the 2-convex inclusion case we introduce a family of blow-up factors, which is a linear functional of boundary data $\varphi$ and then give the asymptotic formulas of $\nabla u$ in Theorem \ref{thm1} for 2D and Theorem \ref{thm2} for 3D under the assumption that the domains satisfy some proper symmetry condition. The results for the general $m$-convex inclusions are presented in Subsection \ref{relationship Du}. We find that the blow-up rates depend on the dimension $d$ and the convexity order $m$, and the blow-up points shift away from the origin with the increasing of $m$. Finally, in Subsection \ref{subsec example}, we give an example that $h_{1}(x')$ and $h_{2}(x')$ are assumed to be $x_{3}=\pm\frac{\kappa}{2}|x_{1}|^{m}\pm\frac{\kappa'}{2}|x_{2}|^{m}$, respectively,  for  $|x'|<2R$, to show how the geometric parameters $\kappa$ and $\kappa'$ influence the blow-up of $\nabla u$. It turns out that the mean curvature of inclusions plays the role.

\subsection{{For the $2$-convex inclusions}}\label{subsec 2-convex}
As mentioned in Subsection \ref{subsec main difficulty}, $|\nabla u_{b}|$ is uniformly bounded with respect to $\varepsilon$. In order to derive the asymptotics of the gradient of solution with respect to the sufficiently small parameter $\varepsilon>0$, we consider the case when two inclusions touch each other. Let $u^{*}$ be the solution of
\begin{align}\label{maineqn touch}
\begin{cases}
\mathcal{L}_{\lambda, \mu}u^{*}=0,\quad&\hbox{in}\ \Omega^{*},\\
u^{*}=\sum_{\alpha=1}^{d(d+1)/2}C_{*}^{\alpha}\psi_{\alpha},&\hbox{on}\ \partial D_{1}^{*}\cup\partial D_{2}^{*},\\
\int_{\partial{D}_{1}^{*}}\frac{\partial{u}^{*}}{\partial\nu}\big|_{+}\cdot\psi_{\beta}+\int_{\partial{D}_{2}^{*}}\frac{\partial{u}^{*}}{\partial\nu}\big|_{+}\cdot\psi_{\beta}=0,&\beta=1,\cdots,d(d+1)/2,\\
u^{*}=\varphi,&\hbox{on}\ \partial{D},
\end{cases}
\end{align}
where
$$D_{1}^{*}:=\{x\in\mathbb R^{d}\big| x+P_{1}\in D_{1}\},\quad D_{2}^{*}:=D_{2},\quad \mbox{and}\quad \Omega^{*}:=D\setminus\overline{D_{1}^{*}\cup D_{2}^{*}},$$
and the constants $C_{*}^{\alpha}$, $\alpha=1,\cdots,d(d+1)/2$, are uniquely determined by minimizing the energy
\begin{equation*}
\int_{\Omega^{*}}\left(\mathbb{C}^{(0)}
e(v),e(v)\right)dx
\end{equation*}
in an admissible function space $$
\mathcal{A}^{*}:=\left\{v\in{H}^{1}(D; \mathbb{R}^{d})
~\big|~e(v)=0~~\mbox{in}~{D}^{*}_{1}\cup{D}^{*}_{2},~\mbox{and}~v=\varphi~\mbox{on}~\partial{D}\right\}.$$ We emphasize that we use the third line of \eqref{maineqn touch}, which implies that total flux of $u^{*}$ along the boundaries of both two inclusions is zero,  to make a distinction with the forth line of \eqref{maineqn}. This kind of limit function for conductivity problem was also used in \cite{g,gn,kleey,lhg}. We define the functionals of $\varphi$, for $\beta=1,\cdots,d(d+1)/2$,
\begin{align}\label{def_bj}
b_{1}^{\beta}[\varphi]:=\int_{\partial{D}_{1}}\frac{\partial{u}_{b}}{\partial\nu}\Big|_{+}\cdot\psi_{\beta}\quad \mbox{and}\quad b_{1}^{*\beta}[\varphi]:=\int_{\partial{D}_{1}^{*}}\frac{\partial u^{*}}{\partial \nu}\Big|_{+}\cdot\psi_{\beta},
\end{align}
and denote
$$\rho_{d}(\varepsilon)=
\begin{cases}
\sqrt{\varepsilon},&\quad d=2,\\
|\log\varepsilon|^{-1},&\quad d=3.
\end{cases}$$
We will prove that $b_{1}^{*\beta}[\varphi]$ are convergent to $b_{1}^{\beta}[\varphi]$, with rates $\rho_{d}(\varepsilon)$.

Here we need some symmetric assumptions on the domain and the boundary data. First, we assume that 
\begin{align*}
({\rm S_{1}}): ~&~D_{1}\cup D_{2}~\mbox{ and~} D ~\mbox{are~ symmetric~ with~ repect~ to~ each~ axis~}x_{i}, \mbox{~and~}\\&~\mbox{ the superplane~} \{x_{d}=0\};
\end{align*}
For boundary data $\varphi$, we assume that in dimension two, $\varphi^{1}(x_{1},x_{2})$ is odd with respect to $x_{2}$ and $\varphi^{2}(x_{1},x_{2})$ is even with respect to $x_{2}$; and in dimension three, $\varphi^{i}(x)$ is odd for $i=1,2,3$, that is, for $x\in\partial D$,
\begin{align*}
&({\rm S_{2}}):~~\varphi^{1}(x_{1},x_{2})=-\varphi^{1}(x_{1},-x_{2}),~ \varphi^{2}(x_{1},x_{2})=\varphi^{2}(x_{1},-x_{2}),\quad\mbox{for}~ d=2;\\
&({\rm S_{3}}):~~	\varphi^{i}(x)=-\varphi^{i}(-x),\quad\,i=1,2,3,~\quad \mbox{for}~d=3.
\end{align*}
We shall use $O(1)$ to denote those quantities satisfying $|O(1)|\leq C$, for some constant independent of $\varepsilon$. We assume that for some $\delta_0>0$,
\begin{equation*}
\delta_0\leq \mu, d\lambda+2\mu\leq\frac{1}{\delta_0}.
\end{equation*}
Then 

\begin{prop}\label{prop converge b}
For $d=2,3$, assume that the domain satisfies \eqref{convexity}, \eqref{h1h3} and $({\rm S_{1}})$. If $\varphi$ satisfies $({\rm S_{2}})$ or $({\rm S_{3}})$, then,
for sufficiently small $\varepsilon>0$, 
\begin{equation}\label{converge b1 12}
b_{1}^{\beta}[\varphi]-b_{1}^{*\beta}[\varphi]=
O(\rho_{d}(\varepsilon)),\quad\beta=1,\cdots,d(d+1)/2.
\end{equation}
\end{prop}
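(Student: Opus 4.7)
The plan is to reduce the difference $b_1^\beta[\varphi] - b_1^{*\beta}[\varphi]$ via two rounds of integration by parts to a boundary flux supported on the thin region where $\partial D_1$ and $\partial D_1^*$ differ, and then to control this flux using the auxiliary-function decomposition, the asymptotics of $a_{11}^{\alpha\alpha}$ from Proposition~\ref{prop a11}, and the symmetries.

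Because $\mathcal{L}_{\lambda,\mu}u_b = 0$ in $\Omega$ and the rigid motion $\psi_\beta$ satisfies $e(\psi_\beta) = 0$, integration by parts against $\psi_\beta$ gives $\int_{\partial D_1}\frac{\partial u_b}{\partial\nu}\big|_+\cdot\psi_\beta + \int_{\partial D_2}\frac{\partial u_b}{\partial\nu}\big|_+\cdot\psi_\beta = \int_{\partial D}\frac{\partial u_b}{\partial\nu}\cdot\psi_\beta$. The analogous identity for $u^*$ in $\Omega^*$, combined with the total-flux side condition in \eqref{maineqn touch}, forces $\int_{\partial D}\frac{\partial u^*}{\partial\nu}\cdot\psi_\beta = 0$ and $b_1^{*\beta}[\varphi] = -\int_{\partial D_2^*}\frac{\partial u^*}{\partial\nu}\big|_+\cdot\psi_\beta$. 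Subtracting, and then applying integration by parts a second time on the common subdomain $\Omega\cap\Omega^* = D\setminus\overline{D_1\cup D_1^*\cup D_2}$ (on which $\mathcal{L}(u_b - u^*) = 0$) against $\psi_\beta$, the difference collapses to a flux supported on $\Sigma_\varepsilon := \partial(D_1\cup D_1^*)\cap\overline{\Omega\cap\Omega^*}$, a set contained in an $O(\varepsilon)$-neighbourhood of $\partial D_1^*$:
\begin{equation*}
b_1^\beta[\varphi] - b_1^{*\beta}[\varphi] = \int_{\Sigma_\varepsilon}\frac{\partial(u_b - u^*)}{\partial\nu}\Big|_+\cdot\psi_\beta.
\end{equation*}

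The remaining task is to estimate this integral. The symmetries $(S_1)$ and $(S_2)$ or $(S_3)$ force parity relations on $u$, and hence on $C_i^\alpha$: certain pairs are equal while others are antipodal, so the linear system \eqref{equ-decompositon} partially decouples, and the diagonal-dominant asymptotics $a_{11}^{\alpha\alpha}\sim\rho_d(\varepsilon)^{-1}$ from Proposition~\ref{prop a11} (together with the uniform boundedness of the off-diagonal entries) yields $|C_2^\alpha - C_*^\alpha| = O(\rho_d(\varepsilon))$ for each $\alpha$. Splitting $\Sigma_\varepsilon = (\Sigma_\varepsilon\setminus\Omega_R)\cup(\Sigma_\varepsilon\cap\Omega_R)$, on the away part both $u_b$ and $u^*$ are $C^{1,\gamma}$-bounded by Schauder estimates, and $u_b - u^*$ solves a homogeneous Lam\'e system with boundary data of size $O(\rho_d(\varepsilon))$, so $|\nabla(u_b - u^*)| = O(\rho_d(\varepsilon))$ there; on the cusp part, $|\nabla u_b|$ is uniformly bounded by Theorem~\ref{auxi thm}, and the singular behaviour of $\nabla u^*$ near the cusp is extracted through the $\varepsilon = 0$ auxiliary function $\bar u$ from \eqref{vvd}---the symmetries ensure that its pairing with $\psi_\beta$ on the tiny boundary piece $\Sigma_\varepsilon\cap\Omega_R$ is again $O(\rho_d(\varepsilon))$. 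Combining both pieces yields \eqref{converge b1 12}.

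The main obstacle is the sharp rate $|C_2^\alpha - C_*^\alpha| = O(\rho_d(\varepsilon))$: it requires the precise matching of the blow-up of $a_{11}^{\alpha\alpha}$ with the sub-leading behaviour of the right-hand side of \eqref{equ-decompositon}, and it is exactly here that the symmetry hypotheses play their essential role by eliminating the leading-order cross-contributions that would otherwise swamp the desired rate.
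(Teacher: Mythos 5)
Your reduction of $b_1^\beta[\varphi]-b_1^{*\beta}[\varphi]$ to a single flux integral over $\Sigma_\varepsilon:=\partial(D_1\cup D_1^*)\cap\overline{\Omega\cap\Omega^*}$ via two applications of integration by parts is correct and is a genuinely different organization from the paper's: the paper instead expands $u_b=\sum_\alpha C_2^\alpha v^\alpha+v_0$ and $u^*=\sum_\alpha C_*^\alpha v^{*\alpha}+v_0^*$, decomposes the difference termwise (its display \eqref{difference bi beta}), and then moves every resulting boundary integral to $\partial D$ by a third integration by parts, where it can use the $L^\infty$ comparison of $v_1^\alpha$ and $v_1^{*\alpha}$ away from the cusp (Lemma \ref{lem difference v11}) together with boundary Schauder estimates. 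That said, two steps of your argument do not hold as written.

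First, you attribute $|C_2^\alpha-C_*^\alpha|=O(\rho_d(\varepsilon))$ to the symmetry-induced decoupling of \eqref{equ-decompositon} together with the diagonal-dominant blow-up of $a_{11}^{\alpha\alpha}$ from Proposition \ref{prop a11}. But the relevant linear system after symmetrization (the paper's \eqref{C1+C2_100} vs.\ \eqref{C1+C2_*}) is governed by the \emph{bounded} coefficients $a^{\alpha\beta}=\sum_{i,j}a_{ij}^{\alpha\beta}$ (energies of $v_1^\alpha+v_2^\alpha$, bounded by Theorem \ref{auxi thm}), not by the blowing-up $a_{11}^{\alpha\alpha}$; the blow-up controls $C_1^\alpha-C_2^\alpha$, not $\tfrac{C_1^\alpha+C_2^\alpha}{2}-C_*^\alpha$. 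To get the rate $O(\rho_d(\varepsilon))$ one needs the quantitative comparisons $a^{\alpha\alpha}-a_*^{\alpha\alpha}=O(\sqrt\varepsilon)$ and $\tilde b_1^\beta-\tilde b_1^{*\beta}=O(\sqrt\varepsilon)$, which are precisely Lemmas \ref{lem valpha1} and \ref{es b1 b1* beta=1}, proved by the paper via Lemma \ref{lem difference v11}. Invoking Proposition \ref{prop a11} here does not supply these; the convergence lemmas are the unavoidable work.

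Second, and more seriously, the estimate of the cusp piece $\int_{\Sigma_\varepsilon\cap\Omega_R}\frac{\partial(u_b-u^*)}{\partial\nu}\big|_+\cdot\psi_\beta$ is not justified. Near the origin $\Sigma_\varepsilon$ coincides with $\partial D_1^*$, which touches $\partial D_2^*$, and $\Sigma_\varepsilon\cap\Omega_R$ has $(d-1)$-measure of order $R$, not of order $\varepsilon$. There, $|\nabla u_b|$ and $|\nabla u^*|$ are merely $O(1)$, and the maximum principle with $O(\rho_d)$ boundary data gives $|u_b-u^*|=O(\rho_d)$ but only the local Schauder bound $|\nabla(u_b-u^*)|\lesssim\rho_d/\delta(x')$ in the strip, whose integral over $\Sigma_\varepsilon\cap\Omega_R$ is then only $O(1)$ (e.g.\ $\rho_d\int_{|x'|<R}\delta(x')^{-1}dx'\sim\rho_d\cdot\rho_d^{-1}$ for $m=2$). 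Your appeal to ``the symmetries ensure the pairing is $O(\rho_d)$'' does not identify a cancellation mechanism strong enough to improve this by the needed factor $\rho_d$. This is exactly the difficulty the paper sidesteps by relocating the flux to $\partial D$, where the relevant gradients are uniformly bounded and the comparison function differences are $O(\sqrt\varepsilon)$ or $O(\varepsilon^{2/3})$ pointwise.
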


The proof will be given in Section \ref{sec pf prop a11 b}. We would like to point out that the functionals $b_{1}^{*\beta}[\varphi]$ will determine whether or not the blow-up occurs, we call them {\it blow-up factors}. For more details, see the proof of Theorem \ref{thm1} below.

The first asymptotic expression of $\nabla u$ in dimension $d=2$ and for $m=2$ follows. 

\begin{theorem}\label{thm1}
For $d=2$, let $D_{1},D_{2}\subset D$ be defined as above and satisfy \eqref{convexity}, \eqref{h1h3} with $m=2$, and $({\rm S_{1}})$.
Let $u\in{H}^{1}(D;\mathbb{R}^{2})\cap{C}^{1}(\overline{\Omega};\mathbb{R}^{2})$
be the solution to \eqref{maineqn}. Then, if $\varphi$ satisfies  $({\rm S_{2}})$, then for sufficiently small $0<\varepsilon<1/2$ and for $x\in\Omega_{R}$,
\begin{align}\label{asymptotic d=2}
\nabla u(x)=\frac{\sqrt{\kappa}}{\pi}\cdot\sqrt{\varepsilon}\left(\frac{b_{1}^{*1}[\varphi]}{\mu}\nabla u_{1}^{1}+\frac{b_{1}^{*2}[\varphi]}{\lambda+2\mu}\nabla u_{1}^{2}\right)(1+O(\varepsilon^{\frac{\gamma+3}{8}}))+O(1)\|\varphi\|_{C^{0}(\partial D)},
\end{align}
where $u_{1}^{\alpha}$ are specific functions, constructed in \eqref{auxiliary improved}, $\alpha=1,2$.
\end{theorem}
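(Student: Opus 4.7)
My plan is to solve the $3\times 3$ linear system for the rigid-motion coefficients $C_1^\alpha-C_2^\alpha$ extracted from the zero-flux condition on $\partial D_1$, insert the asymptotics of $a_{11}^{\alpha\beta}$ and $b_1^\beta[\varphi]$ already prepared in Proposition \ref{prop a11} and Proposition \ref{prop converge b}, and finally combine with the pointwise expansions of $\nabla v_1^\alpha$ from Theorem \ref{coro v1} and Theorem \ref{coro v1d+1} through the decomposition \eqref{nablau_dec}.

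First I will plug \eqref{nablau_dec}--\eqref{def u_b} into the zero-flux condition from \eqref{maineqn} on $\partial D_1$ tested against $\psi_\beta$ and integrate by parts using \eqref{def_aij}--\eqref{def_bj}, which yields
\[
\sum_{\alpha=1}^{3}(C_1^\alpha-C_2^\alpha)\,a_{11}^{\alpha\beta}=b_1^\beta[\varphi],\qquad \beta=1,2,3.
\]
Proposition \ref{prop a11} supplies $a_{11}^{11}\sim\pi\mu/\sqrt{\kappa\varepsilon}$ and $a_{11}^{22}\sim\pi(\lambda+2\mu)/\sqrt{\kappa\varepsilon}$, while the rotational diagonal $a_{11}^{33}$ stays $O(1)$ because $\psi_3=-x_2 e_1+x_1 e_2$ vanishes at the origin, so $\nabla u_1^3=\nabla(\bar u\,\psi_3)$ is bounded on $\Omega_R$ and Theorem \ref{coro v1d+1} passes this bound to the energy integral \eqref{def_aij}. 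Under the reflection $x_1\mapsto -x_1$ permitted by $({\rm S_1})$, $v_1^1$ and $v_1^3$ transform as odd vector fields while $v_1^2$ is even, so by parity the off-diagonals $a_{11}^{12}$ and $a_{11}^{23}$ vanish identically and the surviving cross term $a_{11}^{13}$ is $O(1)$.

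On the right-hand side, Proposition \ref{prop converge b} gives $b_1^\beta[\varphi]=b_1^{*\beta}[\varphi]+O(\sqrt{\varepsilon})\|\varphi\|_{C^0(\partial D)}$, and the same reflection applied to the touching problem \eqref{maineqn touch} under $({\rm S_2})$ forces $b_1^{*3}[\varphi]=0$. The second row of the system then decouples, and the remaining $2\times 2$ block in the $(1,3)$ variables is diagonally dominated by $a_{11}^{11}\sim\varepsilon^{-1/2}$; Cramer's rule together with Proposition \ref{prop a11} will produce
\[
C_1^\alpha-C_2^\alpha=\frac{\sqrt{\kappa\varepsilon}}{\pi\mu_\alpha}\,b_1^{*\alpha}[\varphi]\bigl(1+O(\varepsilon^{(\gamma+3)/8})\bigr)+O(\sqrt{\varepsilon})\|\varphi\|_{C^0(\partial D)},\quad \alpha=1,2,
\]
with $\mu_1=\mu$, $\mu_2=\lambda+2\mu$, together with $C_1^3-C_2^3=O(\sqrt{\varepsilon})\|\varphi\|_{C^0(\partial D)}$. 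Plugging these back into \eqref{nablau_dec}, replacing each $\nabla v_1^\alpha$ by $\nabla u_1^\alpha+O(1)$ via Theorem \ref{coro v1} and Theorem \ref{coro v1d+1}, and absorbing $\nabla u_b$ (bounded by Theorem \ref{auxi thm} and the control $|C_2^\alpha|\le C\|\varphi\|_{C^0(\partial D)}$) together with the $O(\sqrt{\varepsilon})$ rotational contribution into a single $O(1)\|\varphi\|_{C^0(\partial D)}$ remainder, I will recover \eqref{asymptotic d=2}.

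The hardest part will be the sharp parity/structural analysis of $(a_{11}^{\alpha\beta})$: a spurious nonvanishing $O(1)$ off-diagonal would, upon multiplication by the $\varepsilon^{-1/2}$ diagonal during inversion, leak an unwanted $\varepsilon^{-1/2}$ term into the wrong component of $C_1^\alpha-C_2^\alpha$ and destroy the clean diagonal form of \eqref{asymptotic d=2}. The Lam\'e-parameter-dependent correctors $\tilde u_1^\alpha$ in \eqref{auxiliary improved} are essential here, because they are precisely what allow Theorem \ref{coro v1} to pin down $\nabla v_1^\alpha$ up to an $O(1)$ remainder, and this sharpness is what licenses the parity cancellations I rely on inside the energy integrals defining $a_{11}^{\alpha\beta}$.
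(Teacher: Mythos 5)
Your overall architecture (linear system for $C_1^\alpha-C_2^\alpha$ from the flux condition, asymptotics of $a_{11}^{\alpha\alpha}$ and $b_1^\beta$, Cramer's rule, then substitution into \eqref{nablau_dec}) is the paper's route for $\alpha=1,2$. The genuine divergence, and the place where your argument has gaps, is the rotational mode $\alpha=3$. The paper does not estimate $C_1^3-C_2^3$ at all: in the proof of Proposition \ref{conv C alpha} it exploits the full $6\times6$ symmetric structure \eqref{matrix}--\eqref{|symtilb} and Cramer's rule to show $C_1^3=C_2^3$ \emph{exactly} (see \eqref{C1 23}), so the $\alpha=3$ term drops out of \eqref{nablau_dec} identically and the remaining system \eqref{block matrix} is $2\times2$ diagonal. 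You instead keep the $3\times3$ system and need $C_1^3-C_2^3=O(\sqrt\varepsilon)$, which is essential because $\nabla u_1^3$ is \emph{not} bounded on $\Omega_R$: its component $\partial_{x_2}(\bar u\,x_1)=x_1/\delta(x_1)$ reaches $\varepsilon^{-1/2}$ at $|x_1|\sim\sqrt\varepsilon$. This immediately invalidates your stated reason for $a_{11}^{33}=O(1)$ (pointwise boundedness of $\nabla u_1^3$); the bound is true, but only through the energy integral $\int_{|x_1|<R}|x_1|^2/\delta(x_1)\,dx_1=O(1)$.

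More seriously, your route requires $b_1^{*3}[\varphi]=0$ and $a_{11}^{13}=O(1)$, and neither is justified. The $x_1$-reflection you invoke maps each of $D_1^*,D_2^*$ to itself and sends $v_1^3\mapsto -v_1^3$, so it yields $a_{11}^{12}=a_{11}^{23}=0$ (correct) but gives no information on $b_1^{*3}$, only the tautology $a_{11}^{13}=a_{11}^{13}$. To get $b_1^{*3}=0$ you need the $x_2$-reflection, which under $({\rm S_1})$ swaps $D_1^*$ and $D_2^*$ and under $({\rm S_2})$ sends $u^*\mapsto -u^*$ and $\psi_3\mapsto-\psi_3$, giving $b_1^{*3}=b_2^{*3}$; only combined with the zero-total-flux condition in the third line of \eqref{maineqn touch} does this force $b_1^{*3}=0$. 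The bound $a_{11}^{13}=O(1)$ (Cauchy--Schwarz only gives $O(\varepsilon^{-1/4})$) must be checked directly from the strain components of $u_1^1,u_1^3$. Finally, your additive error $O(\sqrt\varepsilon)\|\varphi\|_{C^0(\partial D)}$ in $C_1^\alpha-C_2^\alpha$, $\alpha=1,2$, is too crude: since $\nabla u_1^\alpha=O(\varepsilon^{-1})$ near the origin, an $O(\sqrt\varepsilon)$ error there produces an unabsorbable $O(\varepsilon^{-1/2})$ term. The correct bookkeeping, as in \eqref{differece C1 C2}, is $b_1^\alpha/a_{11}^{\alpha\alpha}=\frac{\sqrt{\kappa\varepsilon}}{\pi\mu_\alpha}\bigl(b_1^{*\alpha}+O(\sqrt\varepsilon)\bigr)\bigl(1+O(\varepsilon^{(\gamma+3)/8})\bigr)$, so the additive error is $O(\varepsilon)$ and its product with $\nabla u_1^\alpha$ is $O(1)$.
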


\begin{remark}
If $D_{1}\cup D_{2}$ and $D$ are disks, for example, 
$$D_{1}=B_{1}(0,1+\varepsilon/2), \quad D_{1}=B_{1}(0,-1-\varepsilon/2), \quad\mbox{and}~~ D=B_{3}(0),$$
then the result in Theorem  \ref{thm1} holds for $\varphi=(x_{2},0)^{\mathrm T}$. More generally, we can choose $\varphi=(cx_{2},\tilde cx_{1})^{\mathrm T}$ for some constants $c$ and $\tilde c$.
\end{remark}

\begin{remark}\label{rem thm dim2}
Recalling the definition of $u_{1}^{\alpha}$, \eqref{auxiliary improved}, a direct calculation yields
\begin{align*}
\nabla u_{1}^{\alpha}(x_{1},x_{2})=\frac{1}{\delta(x_{1})}E_{\alpha 2}+O(1),\quad x\in\Omega_{\varepsilon},
\end{align*}
where ``$E_{\alpha\beta}$'' denotes the basic matrix with only one non-zero entry 1 in the $\alpha^{th}$ row and the $\beta^{th}$ column. So in a neighborhood of the origin, say $\Omega_{\varepsilon}$, we find that \eqref{asymptotic d=2} becomes
\begin{align}\label{formula dim2}
\nabla u(x_{1},x_{2})=\frac{\sqrt{\kappa}}{\pi}\mathbb B_2[\varphi]\cdot\frac{\sqrt{\varepsilon}}{\delta(x_{1})}(1+O(\sqrt\varepsilon))+O(1)\|\varphi\|_{C^{0}(\partial D)},
\end{align} 
where 
$$\mathbb B_2[\varphi]:=\frac{1}{\mu}b_{1}^{*1}[\varphi]
E_{12}
+\frac{1}{\lambda+2\mu}b_{1}^{*2}[\varphi]E_{22}.$$
\end{remark}
For this moment, to get the exact asymptotic expression of $\nabla u$ near the origin, we only need to evaluate these boundary integrals $b_{1}^{*\beta}$ in $\mathbb B_2[\varphi]$, which can be computed by numerical method in practical problems.  We would like to point out that they no longer depend on $\varepsilon$ and there is no singularity near the origin. It is completely a computation problem. We leave it to interested readers.

\begin{theorem}\label{thm2}
For $d=3$, let $D_{1},D_{2}\subset D$ be defined as above and satisfy \eqref{convexity}, \eqref{h1h3} with $m=2$, and $({\rm S_{1}})$. Let $u\in{H}^{1}(D;\mathbb{R}^{3})\cap{C}^{1}(\overline{\Omega};\mathbb{R}^{3})$
	is the solution to \eqref{maineqn}. Then if $\varphi$ satisfies $({\rm S_{3}})$, 
	we have, for sufficiently small $0<\varepsilon<1/2$, and for $x\in\Omega_{R}$,  
	\begin{align}\label{asymp Du R3}
	\nabla u
	&=\frac{\kappa}{\pi}\cdot\frac{1}{|\log\varepsilon|}
	\left(\sum_{\alpha=1}^{2}\frac{b_{1}^{*\alpha}[\varphi]}{\mu}\nabla u_{1}^{\alpha}+\frac{b_{1}^{*3}[\varphi]}{\lambda+2\mu}\nabla u_{1}^{3}\right)\left(1+O(|\log\varepsilon|^{-1})\right)\nonumber\\
	&\quad\quad\quad+O(1)\|\varphi\|_{C^{0}(\partial D)},
	\end{align}
where $u_{1}^{\alpha}$ are specific functions, constructed in  \eqref{auxiliary improved dim3}.	
\end{theorem}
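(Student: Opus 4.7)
The plan is to plug the asymptotic expressions of $\nabla v_{1}^{\alpha}$ (Theorems \ref{coro v1}--\ref{coro v1d+1}) and the boundedness of $\nabla u_{b}$ (Theorem \ref{auxi thm}) into the decomposition \eqref{nablau_dec}, so that the whole task reduces to evaluating the six free differences $C_{1}^{\alpha}-C_{2}^{\alpha}$ asymptotically and then localizing in $\Omega_{R}$.

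First I would exploit $({\rm S_{1}})$ and $({\rm S_{3}})$ to halve the number of non-trivial modes. Under $({\rm S_{3}})$ the datum $\varphi$ is odd, and by $({\rm S_{1}})$ the full reflection $x\mapsto -x$ swaps $D_{1}$ and $D_{2}$ while preserving $D$; by uniqueness the solution of \eqref{maineqn} therefore satisfies $u(-x)=-u(x)$. Since the translation generators $\psi_{1},\psi_{2},\psi_{3}$ are invariant under $x\mapsto -x$ while the rotation generators $\psi_{4},\psi_{5},\psi_{6}$ flip sign, matching $u=\sum_{\alpha} C_{1}^{\alpha}\psi_{\alpha}$ in $D_{1}$ with $u=\sum_{\alpha} C_{2}^{\alpha}\psi_{\alpha}$ in $D_{2}$ yields
\begin{equation*}
C_{1}^{\alpha}=-C_{2}^{\alpha}\quad(\alpha=1,2,3),\qquad C_{1}^{\alpha}=C_{2}^{\alpha}\quad(\alpha=4,5,6).
\end{equation*}
Hence the rotation modes drop out of \eqref{nablau_dec} exactly, and only the three translation differences need to be quantified.

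Next I would reduce the linear system \eqref{equ-decompositon} with $j=1$ to
\begin{equation*}
\sum_{\alpha=1}^{6}(C_{1}^{\alpha}-C_{2}^{\alpha})\,a_{11}^{\alpha\beta} = b_{1}^{\beta}[\varphi],\qquad \beta=1,\dots,6,
\end{equation*}
by combining it with the definition \eqref{def u_b} of $u_{b}$ and the identity $\int_{\partial D_{1}}\frac{\partial(v_{1}^{\alpha}+v_{2}^{\alpha})}{\partial\nu}\big|_{+}\cdot\psi_{\beta}=-(a_{11}^{\alpha\beta}+a_{21}^{\alpha\beta})$, whose boundedness follows from Theorem \ref{auxi thm}. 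The axis symmetries built into $({\rm S_{1}})$ force $a_{11}^{\alpha\beta}=0$ whenever $\psi_{\alpha}$ and $\psi_{\beta}$ have opposite parities along some coordinate; in particular the leading $3\times 3$ translation block of $(a_{11}^{\alpha\beta})$ is diagonal, with entries of order $|\log\varepsilon|$ given by Proposition \ref{prop a11}(ii). Solving the three scalar translation equations then gives
\begin{equation*}
C_{1}^{\alpha}-C_{2}^{\alpha} = \frac{\kappa\,b_{1}^{\alpha}[\varphi]}{\pi\mu\,|\log\varepsilon|}\bigl(1+O(|\log\varepsilon|^{-1})\bigr),\quad \alpha=1,2,
\end{equation*}
with $\mu$ replaced by $\lambda+2\mu$ for $\alpha=3$.

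Finally I would replace $b_{1}^{\alpha}[\varphi]$ by $b_{1}^{*\alpha}[\varphi]$ through Proposition \ref{prop converge b}, whose $O(|\log\varepsilon|^{-1})$ error is absorbed into the relative remainder, and insert $\nabla v_{1}^{\alpha}=\nabla u_{1}^{\alpha}+O(1)$ from Theorem \ref{coro v1}; the stray $O(1)$ multiplied by the $O(|\log\varepsilon|^{-1})$ prefactor contributes to the $O(1)\|\varphi\|_{C^{0}(\partial D)}$ remainder in \eqref{asymp Du R3}. The main technical hurdle is the parity bookkeeping on $(a_{11}^{\alpha\beta})$: one must confirm, via $({\rm S_{1}})$ together with the explicit form of the $\psi_{\alpha}$, that the translation--translation off-diagonal entries really vanish (or at worst remain $O(1)$, hence of lower order than the $|\log\varepsilon|$ diagonal), so that the clean diagonal inversion delivering the coefficients $\kappa/(\pi\mu)$ and $\kappa/(\pi(\lambda+2\mu))$ is fully justified.
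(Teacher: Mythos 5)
Your proposal is correct and follows essentially the same route as the paper: decompose $\nabla u$ via \eqref{nablau_dec}, reduce to the $3\times3$ translation block of the system $\sum_{\alpha}(C_{1}^{\alpha}-C_{2}^{\alpha})a_{11}^{\alpha\beta}=b_{1}^{\beta}$, invert it using the asymptotics of $a_{11}^{\alpha\alpha}$ from Proposition \ref{prop a11}, replace $b_{1}^{\beta}$ by $b_{1}^{*\beta}$ via Proposition \ref{prop converge b}, and then insert Theorem \ref{coro v1} and Theorem \ref{auxi thm}. One genuine simplification is worth noting: the paper obtains $C_{1}^{\alpha}=C_{2}^{\alpha}$ for the rotation modes $\alpha=4,5,6$ through the somewhat laborious Proposition \ref{conv C alpha}, which entails establishing many symmetry relations among the $a_{ij}^{\alpha\beta}$ and then applying Cramer's rule to a structured $6\times 6$ system, whereas you get the full set of relations $C_{1}^{\alpha}=-C_{2}^{\alpha}$ ($\alpha\le 3$) and $C_{1}^{\alpha}=C_{2}^{\alpha}$ ($\alpha\ge 4$) in one stroke from the observation that $-u(-x)$ solves \eqref{maineqn} under $({\rm S_1})$--$({\rm S_3})$, hence $u(-x)=-u(x)$ by uniqueness, and one matches parities of the $\psi_{\alpha}$ in $D_1$ against $D_2$. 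Your remark that the off-diagonal translation entries of $(a_{11}^{\alpha\beta})$ only need to be lower order than the $|\log\varepsilon|$ diagonal for the inversion to close is also a valid and slightly more robust observation than the paper's exact-vanishing argument; indeed, pairing the leading singular parts of $e(u_{1}^{\alpha})$ (which live in different strain components for different $\alpha$) through $\mathbb{C}^0$ already shows those off-diagonal entries are $O(1)$ without invoking $({\rm S_1})$. Just be aware that Proposition \ref{prop converge b}, which you invoke as a black box, itself rests on Proposition \ref{conv C alpha}, so your reflection argument supplements rather than entirely replaces that lemma.
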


\begin{remark}
	
(1) If $\varphi^{i}=x_{i}$ for $i=1,2,3$, $D_{1}\cup D_{2}$ and $D$ are spheres satisfying $({\rm S_{1}})$, then Theorem \ref{thm2} holds true.
	
(2)  Similarly as in Remark \ref{rem thm dim2}, for $x\in\Omega_{\sqrt{\varepsilon}}$, from \eqref{asymp Du R3}, 
\begin{align}\label{formula dim3}
\nabla u(x',x_{3})=\frac{\kappa}{\pi|\log\varepsilon|}\mathbb B_3[\varphi]\cdot\frac{1}{\delta(x')}\left(1+O(|\log\varepsilon|^{-1})\right)+O(1)\|\varphi\|_{C^{0}(\partial D)},
\end{align}
where 
\begin{align}\label{def B_1}
\mathbb B_3[\varphi]:=\frac{1}{\mu}\left(b_{1}^{*1}E_{13}+b_{1}^{*2}E_{23}\right)
+\frac{1}{\lambda+2\mu}b_{1}^{*3}[\varphi]E_{33}.
\end{align}

(3)
It is worth mentioning that the asymptotic formulas \eqref{formula dim2} and \eqref{formula dim3} immediately imply that the blow-up rates of $|\nabla u|$, $\varepsilon^{-1/2}$ for $d=2$, and $(\varepsilon|\log\varepsilon|)^{-1}$ for $d=3$, obtained in \cite{bll1,bll2,l}, are optimal.
\end{remark}

\begin{remark}
If $\varphi=(\varphi^{1},\varphi^{2},\varphi^{3})^{\mathrm T}$ satisfies another symmetric condition
\begin{equation*}
\begin{cases}
\varphi^{1}(x_{1},x_{2},x_{3})=-\varphi^{1}(x_{1},x_{2},-x_{3})=-\varphi^{1}(x_{1},-x_{2},-x_{3}),\\
\varphi^{2}(x_{1},x_{2},x_{3})=-\varphi^{2}(x_{1},x_{2},-x_{3})=\varphi^{2}(x_{1},-x_{2},-x_{3}),\\
\varphi^{3}(x_{1},x_{2},x_{3})=\varphi^{3}(x_{1},x_{2},-x_{3})=\varphi^{3}(x_{1},-x_{2},-x_{3}),
\end{cases}
\end{equation*}
then by replicating the proof of Proposition \ref{conv C alpha} below, we obtain $C_{1}^{\alpha}=C_{2}^{\alpha}$, $\alpha=2,\cdots,6$. Thus, \eqref{asymp Du R3} becomes more simpler as follows
\begin{align*}
\nabla u
=\frac{\kappa}{\pi}\cdot\frac{1}{|\log\varepsilon|}
\frac{1}{\mu}b_{1}^{*1}[\varphi]\nabla u_{1}^{1}\left(1+O(|\log\varepsilon|^{-1})\right)+O(1)\|\varphi\|_{C^{0}(\partial D)}.
\end{align*}
\end{remark}

\subsection{For the $m$-convex inclusions, $m\geq3$}\label{relationship Du}

In the following, we shall reveal the role of the order of the relatively convexity between $\partial D_{1}$ and $\partial D_{2}$, $m$, playing in the asymptotics of $\nabla u$. Define 
$$Q_{d,m}=2\int_{0}^{\infty}\frac{t^{d-2}}{1+t^{m}}\ dt,\quad \quad\widetilde{Q}_{d,m}=2\int_{0}^{\infty}\frac{t^{d}}{1+t^{m}}\ dt;$$
and
\begin{align*}
\rho_{m,2}(\varepsilon)=
\begin{cases}
|\log\varepsilon|^{-1},&~m=3,\\
\varepsilon^{1/4},&~m=4,\\
0,&~ m\geq5,
\end{cases}
~\quad\quad\mbox{and}~\quad~E(\kappa,\varepsilon,m)=
\begin{cases}
\frac{3\kappa}{2}\frac{1}{|\log\varepsilon|},&~m=3,\\
\frac{\kappa^{3/m}\varepsilon^{1-3/m}}{\widetilde{Q}_{2,m}},&~ m\geq4.
\end{cases}
\end{align*}
Then 
\begin{theorem}\label{thmhigher}
For $d=2$, let $D_{1},D_{2}\subset D$ be defined as above and satisfy \eqref{convexity} and \eqref{h1h3} with $m\geq3$. 
Let $u\in{H}^{1}(D;\mathbb{R}^{2})\cap{C}^{1}(\overline{\Omega};\mathbb{R}^{2})$
	be the solution to \eqref{maineqn}. Then for sufficiently small $0<\varepsilon<1/2$ and $x\in \Omega_{R}$, we have
	\begin{align}\label{Du d=2 m}
	\nabla u(x)&=\frac{\kappa^{1/m}\varepsilon^{1-1/m}}{Q_{2,m}}\left(\frac{b_{1}^{*1}[\varphi]}{\mu}\nabla u_{1}^{1}+\frac{b_{1}^{*2}[\varphi]}{\lambda+2\mu}\nabla u_{1}^{2}\right)\left(1+O(\rho_{m,2}(\varepsilon))\right)\nonumber\\
	&\quad\quad+E(\kappa,\varepsilon,m)\frac{b_{1}^{*3}[\varphi]}{\lambda+2\mu}\nabla u_{1}^{3}\left(1+O(\rho_{m,2}(\varepsilon))\right)+O(1)\|\varphi\|_{C^{0}(\partial D)},
	\end{align}
	where $u_{1}^{\alpha}$ are defined in \eqref{auxiliary improved}, $\alpha=1,2$, and $u_{1}^{3}$ is defined in \eqref{auxi fun d+1}.
\end{theorem}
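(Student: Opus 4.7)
The plan is to replicate the scheme that proved Theorems \ref{thm1} and \ref{thm2}, exploiting the additional geometric smallness provided by $m$-convexity with $m\geq 3$. Starting from the decomposition \eqref{nablau_dec} and the pointwise estimates of Theorems \ref{coro v1}--\ref{coro v1d+1}, the problem reduces to determining the three differences $C_{1}^{\alpha}-C_{2}^{\alpha}$, $\alpha=1,2,3$. Testing the compatibility conditions \eqref{equ-decompositon} with $j=1,2$, subtracting the two resulting equations, and invoking the definition of $u_{b}$ in \eqref{def u_b}, one obtains the $3\times3$ linear system
\begin{equation*}
\sum_{\alpha=1}^{3}\bigl(C_{1}^{\alpha}-C_{2}^{\alpha}\bigr)\,a_{11}^{\alpha\beta}=b_{1}^{\beta}[\varphi],\qquad \beta=1,2,3,
\end{equation*}
in direct analogy with the 2-convex case.

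The heart of the argument is to generalise Proposition \ref{prop a11} to the $m$-convex regime. Substituting $\nabla v_{1}^{\alpha}=\nabla u_{1}^{\alpha}+O(1)$ into the energy representation \eqref{def_aij}, and using the improved auxiliary functions \eqref{auxiliary improved} and \eqref{auxi fun d+1} together with $\delta(x_{1})\approx \varepsilon+\kappa|x_{1}|^{m}$, the Lam\'e-parameter-dependent correctors $\tilde{u}_{1}^{\alpha}$ are designed so that the leading contributions to $a_{11}^{11}$ and $a_{11}^{22}$ reduce, respectively, to $\mu\int\!\frac{dx_{1}}{\delta(x_{1})}$ and $(\lambda+2\mu)\int\!\frac{dx_{1}}{\delta(x_{1})}$, while $a_{11}^{33}$ reduces to $(\lambda+2\mu)\int\!\frac{x_{1}^{2}\,dx_{1}}{\delta(x_{1})}$ because $|\psi_{3}|^{2}\approx x_{1}^{2}$ throughout the narrow region. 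The rescaling $x_{1}=(\varepsilon/\kappa)^{1/m}t$ converts the first two integrals into $\kappa^{-1/m}\varepsilon^{1/m-1}Q_{2,m}$, and the third into $\kappa^{-3/m}\varepsilon^{3/m-1}\widetilde{Q}_{2,m}$ when $m\geq 4$; when $m=3$ the integral defining $\widetilde{Q}_{2,3}$ diverges logarithmically, and a direct computation with the cutoff $|x_{1}|\leq R$ produces $\frac{2}{3\kappa}|\log\varepsilon|$, matching the definition of $E(\kappa,\varepsilon,m)$. The off-diagonal entries $a_{11}^{\alpha\beta}$ with $\alpha\neq\beta$ must be shown to be strictly subleading: the integrands pick up a factor $\delta'(x_{1})\sim m\kappa\,\mathrm{sgn}(x_{1})|x_{1}|^{m-1}$, whose odd parity combined with the even leading local geometry $|x_{1}|^{m}$ forces the principal piece of each mixed integral to cancel, leaving only a remainder driven by the $O(|x_{1}|^{m+1})$ correction in \eqref{convexity}. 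This is precisely why no global symmetry hypothesis is required for Theorem \ref{thmhigher}.

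In parallel, I would adapt the proof of Proposition \ref{prop converge b} to the $m$-convex geometry by setting $w:=u-u^{*}$, testing its equation against $\psi_{\beta}$ across $\partial D_{1}\cup\partial D_{2}$, and controlling the narrow-region contribution through Theorems \ref{coro v1}--\ref{coro v1d+1} together with an energy estimate for $w$ based on the scalar auxiliary function $\bar{u}$, to obtain
\begin{equation*}
b_{1}^{\beta}[\varphi]-b_{1}^{*\beta}[\varphi]=O\bigl(\rho_{m,2}(\varepsilon)\bigr),\qquad \beta=1,2,3.
\end{equation*}
Inverting the nearly diagonal $3\times 3$ system with the asymptotics above then produces
\begin{equation*}
C_{1}^{\alpha}-C_{2}^{\alpha}=\frac{\kappa^{1/m}\varepsilon^{1-1/m}}{Q_{2,m}\,\eta_{\alpha}}\,b_{1}^{*\alpha}[\varphi]\bigl(1+O(\rho_{m,2}(\varepsilon))\bigr),\quad \alpha=1,2,
\end{equation*}
with $\eta_{1}=\mu$, $\eta_{2}=\lambda+2\mu$, along with $C_{1}^{3}-C_{2}^{3}=E(\kappa,\varepsilon,m)\,\frac{b_{1}^{*3}[\varphi]}{\lambda+2\mu}\bigl(1+O(\rho_{m,2}(\varepsilon))\bigr)$. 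Substituting into \eqref{nablau_dec}, and absorbing $\nabla u_{b}$ and the $O(1)$ remainders from Theorems \ref{coro v1}, \ref{coro v1d+1} and \ref{auxi thm} into the $O(1)\|\varphi\|_{C^{0}(\partial D)}$ term, yields \eqref{Du d=2 m}.

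The step I expect to be the main obstacle is the rotation mode when $m=3$: since the integral governing the leading part of $a_{11}^{33}$ is only logarithmically divergent, the cutoff at $|x_{1}|=R$ and the matching with the $O(1)$ bound coming from Theorem \ref{coro v1d+1} away from the origin leave additive $O(1)$ constants that the small coefficient $|\log\varepsilon|^{-1}$ in $E(\kappa,\varepsilon,3)$ cannot absorb into a purely multiplicative error of the form $1+O(\rho_{m,2}(\varepsilon))$; these constants must be tracked carefully and shown to merge into the overall $O(1)\|\varphi\|_{C^{0}(\partial D)}$ remainder. A closely related delicate point is to verify that the cross entries $a_{11}^{13}$ and $a_{11}^{23}$ are of strictly smaller order than the geometric mean $\sqrt{a_{11}^{\alpha\alpha}\,a_{11}^{33}}$ for $\alpha=1,2$, so that the approximate diagonal inversion of the system is legitimate and does not contaminate the leading coefficients appearing in \eqref{Du d=2 m}.
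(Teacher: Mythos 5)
Your proposal is correct and follows essentially the same route as the paper: the reduction to the $3\times3$ system for $C_{1}^{\alpha}-C_{2}^{\alpha}$, the $m$-convex analogues of Propositions \ref{prop a11} and \ref{prop converge b} (your rescaling $x_{1}=(\varepsilon/\kappa)^{1/m}t$ reproduces exactly the constants $Q_{2,m}$, $\widetilde{Q}_{2,m}$ and the logarithmic case $m=3$ of $E(\kappa,\varepsilon,m)$ in the paper's Propositions \ref{prop a1133} and \ref{prop converge B}), and the near-diagonal inversion yielding \eqref{Du d=2 m}. Your explicit parity argument for the smallness of the off-diagonal entries $a_{11}^{\alpha\beta}$, $\alpha\neq\beta$, relative to $\sqrt{a_{11}^{\alpha\alpha}a_{11}^{\beta\beta}}$ is a point the paper leaves implicit (it simply recalls the diagonal system \eqref{block matrix}), and it correctly accounts for why no symmetry hypothesis is needed when $m\geq3$, $d=2$.
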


\begin{remark}
	We would like to remark that the pointwise upper bound estimates of $|\nabla u|$ in \cite{hl} imply that when $m\geq d+1$, the maximum of the upper bounds obtain at $x\in\Omega_{R}$ and $|x'|=c\varepsilon^{1/m}$ for some positive constant $c>0$. Therefore,  for $x\in \Omega_{\varepsilon^{1/(m(m-1))}}$, recalling the definition of $u_{1}^{\alpha}$, we have 
	\begin{align*}
	\nabla u(x_{1},x_{2})&=
	\frac{\kappa^{1/m}\varepsilon^{1-1/m}}{Q_{2,m}}\mathbb B_{2,\rm{I}}[\varphi]\cdot\frac{1}{\delta(x_{1})}\left(1+O(\rho_{m,2}(\varepsilon))\right)\\
	&\quad+E(\kappa,\varepsilon,m)\mathbb B_{2,\rm{II}}[\varphi]\cdot\frac{x_{1}}{\delta(x_{1})}\left(1+O(\rho_{m,2}(\varepsilon))\right)+O(1)\|\varphi\|_{C^{0}(\partial D)},
	\end{align*}
	where 
	\begin{align*}
	\mathbb B_{2,\text{I}}[\varphi]:=\frac{1}{\mu}b_{1}^{*1}[\varphi]
	E_{12}
	+\frac{1}{\lambda+2\mu}b_{1}^{*2}[\varphi]E_{22},\quad \mathbb B_{2,\text{II}}[\varphi]:=\frac{1}{\lambda+2\mu}b_{1}^{*3}[\varphi]
	E_{22}.
	\end{align*}
\end{remark}

\begin{remark}
	If $\mathbb B_{2,\text{I}}[\varphi]=0$ for some $\varphi$, then the concentration mechanism of the stress is determined by $\mathbb B_{2,\text{II}}[\varphi]$. Thus, Theorem \ref{thmhigher}, combining with the upper bounds in \cite{hl}, implies that the blow-up occurs at the segments $\mathcal{S}:=\{(\bar{x}_{1},\bar{x}_{2})\in\Omega_{R}\big|~~  |\bar{x}_{1}|=\varepsilon^{1/m}\}$, with blow-up rate $\frac{1}{|\log\varepsilon|\varepsilon^{2/3}}$ when $m=3$ and $\varepsilon^{-\frac{2}{m}}$ when $m\geq4$. Consequently, the gradient will not blow up any more and $\mathcal{S}$ will be more and more away from the shortest segment $\overline{P_{1}P_{2}}$ as $m$ goes to infinity. From \eqref{convexity} with $R<1$, we can see that when $m\rightarrow\infty$, the boundaries of $\partial D_{1}$ and $\partial D_{2}$ parallel. However, in fact, it was showed in \cite{hl} that there is no blow-up in this situation. The result of Theorem \ref{thmhigher} describes this diffuse process of the stress concentration phenomenon when $m$ changes from $2$ to $+\infty$.
\end{remark}

Finally, when $d=3$, we define
\begin{align*}
\rho_{m,3}(\varepsilon)=
\begin{cases}
|\log\varepsilon|^{-1},&~m=4,\\
\varepsilon^{1-4/m},&~ 5\leq m\leq 7,\\
0,&~ m\geq 8;
\end{cases}\quad\mbox{and}~
F(\kappa,\varepsilon,m)=
\begin{cases}
\frac{2\kappa}{\pi|\log\varepsilon|},&~ m=4,\\
\frac{\kappa^{4/m}\varepsilon^{1-4/m}}{\pi \widetilde{Q}_{3,m}},&~ m\geq 5.
\end{cases}
\end{align*}
Then 
\begin{theorem}\label{thmhigher2}
For $d=3$, let $D_{1},D_{2}\subset D$ be defined as above and satisfy \eqref{convexity} and \eqref{h1h3} with $m\geq3$. Let $u\in{H}^{1}(D;\mathbb{R}^{3})\cap{C}^{1}(\overline{\Omega};\mathbb{R}^{3})$ be the solution to \eqref{maineqn}. 
	
(i) When $m=3$, we assume that $D_{1}\cup D_{2}$ and $D$ satisfies $({\rm S_{1}})$. Then if $\varphi$ satisfies 
	$({\rm S_{3}})$, then for sufficiently small $0<\varepsilon<1/2$, and $x\in\Omega_{R}$,  
	\begin{align}\label{formula 33}
	\nabla u(x)&=
	\frac{\kappa^{2/3}}{\pi}\cdot\frac{\varepsilon^{1/3}}{ Q_{3,3}}\left(\sum_{\alpha=1}^{2}\frac{b_{1}^{*\alpha}[\varphi]}{\mu}\nabla u_{1}^{\alpha}+\frac{b_{1}^{*3}[\varphi]}{\lambda+2\mu}\nabla u_{1}^{3}\right)\Big(1+O(\varepsilon^{1/3})\Big)\nonumber\\
	&\quad\quad\quad+O(1)\|\varphi\|_{C^{0}(\partial D)},
	\end{align}
	where $u_{1}^{\alpha}$ are specific functions, constructed in \eqref{auxiliary improved dim3}.	
	
	(ii) When $m\geq4$, for sufficiently small $0<\varepsilon<1/2$ and $x\in\Omega_{R}$,
	\begin{align}\label{formula d3m4}
	\nabla u(x)
	&=\frac{\kappa^{2/m}\varepsilon^{1-2/m}}{\pi Q_{3,m}}\left(\sum_{\alpha=1}^{2}\frac{b_{1}^{*\alpha}[\varphi]}{\mu}\nabla u_{1}^{\alpha}+\frac{b_{1}^{*3}[\varphi]}{\lambda+2\mu}\nabla u_{1}^{3}\right)\left(1+O(\rho_{m,3}(\varepsilon))\right)\nonumber\\
	&\quad\quad+F(\kappa,\varepsilon,m)\left(\frac{b_{1}^{*4}[\varphi]}{\mu}\nabla u_{1}^{4}+\sum_{\alpha=5}^{6}\frac{b_{1}^{*\alpha}[\varphi]}{\lambda+2\mu}\nabla u_{1}^{\alpha}\right)\left(1+O(\rho_{m,3}(\varepsilon))\right)\nonumber\\
	&\quad\quad+O(1)\|\varphi\|_{C^{0}(\partial D)},
	\end{align}
	where $u_{1}^{\alpha}$ are specific functions, constructed in \eqref{auxiliary improved dim3} and \eqref{auxi fun d+1}, respectively.
\end{theorem}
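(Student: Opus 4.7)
The strategy is the same as the one used for Theorem \ref{thm2} and Theorem \ref{thmhigher}: exploit the decomposition \eqref{nablau_dec}, determine the differences $C_1^\alpha - C_2^\alpha$ by inverting the linear system coming from \eqref{equ-decompositon}, and then assemble the pointwise asymptotic via Theorems \ref{coro v1}, \ref{coro v1d+1} (to replace $\nabla v_1^\alpha$ by $\nabla u_1^\alpha + O(1)$) and Theorem \ref{auxi thm} (to absorb $\nabla u_b$ into the bounded remainder). Using $u_b = \sum_\alpha C_2^\alpha(v_1^\alpha+v_2^\alpha)+v_0$ to rewrite the compatibility conditions, the system \eqref{equ-decompositon} collapses to
\[
\sum_{\alpha=1}^{6}(C_1^\alpha - C_2^\alpha)\,a_{1j}^{\alpha\beta} = b_j^\beta[\varphi],\qquad j=1,2,\;\;\beta=1,\ldots,6,
\]
so the six unknowns $X_\alpha:=C_1^\alpha - C_2^\alpha$ are controlled by the $6\times 6$ matrix $(a_{11}^{\alpha\beta})_{\alpha,\beta}$ together with the data $b_1^\beta[\varphi]$.

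The first input is the sharp asymptotics of $a_{11}^{\alpha\beta}$ in dimension three for $m\geq 3$, extending Proposition \ref{prop a11}. Writing $v_1^\alpha = u_1^\alpha + O(1)$ by Theorems \ref{coro v1} and \ref{coro v1d+1}, the leading integrand in $\int_{\Omega}(\mathbb{C}^0 e(u_1^\alpha),e(u_1^\beta))\,dx$ concentrates in $\Omega_R$, where $\partial_{x_3}\bar u = 1/\delta(x')$ dominates and the correctors $\tilde u_1^\alpha$ in \eqref{auxiliary improved dim3} cancel what would otherwise be residual singular contributions (this is precisely the point emphasised in Remark \ref{rmk a11}). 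The scaling $t=(\kappa/\varepsilon)^{1/m}r$ then produces
\[
a_{11}^{\alpha\alpha}\sim\frac{\pi\,Q_{3,m}}{\kappa^{2/m}\varepsilon^{1-2/m}}\times\begin{cases}\mu,&\alpha=1,2,\\ \lambda+2\mu,&\alpha=3,\end{cases}
\]
while for the rotational modes $\alpha=4,5,6$ the additional power $|x'|^2$ in the weight yields the weaker rate $\pi\widetilde{Q}_{3,m}/(\kappa^{4/m}\varepsilon^{1-4/m})$ (logarithmic at the borderline $m=4$, bounded for $m=3$). In parallel, one mimics the proof of Proposition \ref{prop converge b} from Section \ref{sec pf prop a11 b} to obtain $b_1^\beta[\varphi]-b_1^{*\beta}[\varphi]=O(\rho_{m,3}(\varepsilon))$ in this regime.

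The central step is the asymptotic inversion of the $6\times 6$ system. For part (i), $m=3$, the symmetry hypotheses $(S_1)$ and $(S_3)$ force parity relations of the form $C_1^\alpha=\pm C_2^\alpha$ (the 3D analogue of Proposition \ref{conv C alpha}) and eliminate the off-diagonal coupling between translational and rotational modes. The matrix is then asymptotically block-diagonal: the translational diagonals $a_{11}^{\alpha\alpha}\sim \varepsilon^{-1/3}$ dominate the rotational diagonals, which remain $O(1)$ when $m=3$. Cramer's rule gives $X_\alpha = b_1^{*\alpha}[\varphi]/a_{11}^{\alpha\alpha}(1+o(1))$ for $\alpha=1,2,3$, whereas the rotational differences stay $O(1)$ and, multiplied by the bounded $\nabla v_1^\alpha$ for $\alpha=4,5,6$ in $\Omega_R$, are absorbed into the $O(1)\|\varphi\|_{C^0}$ remainder, producing \eqref{formula 33}. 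For part (ii), $m\geq 4$, no symmetry is available, but the translational diagonal $\sim\varepsilon^{-1+2/m}$ is strictly larger than the rotational one $\sim\varepsilon^{-1+4/m}$; a Schur-complement block inversion, combined with the Cauchy--Schwarz bound $|a_{11}^{\alpha\beta}|\leq (a_{11}^{\alpha\alpha}a_{11}^{\beta\beta})^{1/2}$ refined by explicit odd-in-$x'$ cancellations in the cross integrals, separates each mode at leading order and yields six asymptotic expressions for the $X_\alpha$. Substituting these into \eqref{nablau_dec} delivers \eqref{formula d3m4}.

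The hardest part will be the bookkeeping in part (ii). Without symmetry one cannot simply discard entire blocks of the coefficient matrix: every off-diagonal entry $a_{11}^{\alpha\beta}$ (as well as the cross-inclusion quantities $a_{12}^{\alpha\beta}$ needed to close Proposition \ref{prop converge b}) must be computed precisely enough to rule out accidental leading-order contributions to the inverse, and the remainder must be tracked through the three sub-regimes $m=4$, $5\le m\le 7$, $m\ge 8$ encoded in $\rho_{m,3}(\varepsilon)$. Conceptually the scheme is the direct 3D extension of Theorem \ref{thmhigher}; the novelty lies entirely in handling the extra rotational mode and the matching Lam\'{e}-dependent constants introduced by $\tilde u_1^\alpha$ in \eqref{auxiliary improved dim3}.
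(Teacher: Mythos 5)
Your overall strategy coincides with the paper's: reduce to the linear system for $X_\alpha=C_1^\alpha-C_2^\alpha$, establish the $3$D analogues of Propositions \ref{prop a11} and \ref{prop converge b} for $m\geq3$ (these are exactly Propositions \ref{prop a1133 R3} and \ref{prop converge B R3} in the paper), invert by Cramer's rule, and substitute into \eqref{nablau_dec} using Theorems \ref{auxi thm}, \ref{coro v1}, and \ref{coro v1d+1}. Part (ii) as you describe it is fine and matches the paper (which in fact writes out only $m=4$ and treats the system as diagonal at leading order; your extra care with the off-diagonal entries via a Schur-complement argument is a reasonable way to make that step rigorous).

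There is, however, a genuine error in your treatment of the rotational modes in part (i). You claim that for $m=3$ the rotational differences ``stay $O(1)$ and, multiplied by the bounded $\nabla v_1^\alpha$ for $\alpha=4,5,6$ in $\Omega_R$, are absorbed into the $O(1)\|\varphi\|_{C^0}$ remainder.'' But $\nabla v_1^\alpha$ is \emph{not} bounded in $\Omega_R$ for $\alpha=4,5,6$: by Theorem \ref{coro v1d+1} and \eqref{auxi fun d+1}, $\nabla v_1^\alpha=\nabla(\bar u\psi_\alpha)+O(1)$, and $|\psi_\alpha\,\partial_{x_3}\bar u|\sim |x'|/\delta(x')$, which at $|x'|\sim\varepsilon^{1/m}$ is of size $\varepsilon^{1/m-1}$ ($=\varepsilon^{-2/3}$ for $m=3$). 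Since for $m=3$ the rotational capacities $a_{11}^{\alpha\alpha}$, $\alpha=4,5,6$, are only $O(1)$, a merely bounded $X_\alpha$ would produce a contribution $X_\alpha\nabla u_1^\alpha\sim\varepsilon^{-2/3}$ — the \emph{same order} as the leading translational term in \eqref{formula 33} — so it cannot be discarded. This is precisely why the symmetry hypotheses $({\rm S_1})$ and $({\rm S_3})$ are imposed only in the case $m=3$: as in the proof of Proposition \ref{conv C alpha} for $d=3$, they force $C_1^\alpha=C_2^\alpha$ \emph{exactly} for $\alpha=4,5,6$, so the rotational terms drop out of \eqref{nablau_dec_d} identically rather than being ``absorbed.'' Your argument needs to be repaired by deriving this exact cancellation from the parity relations you mention, instead of appealing to a false boundedness of $\nabla v_1^\alpha$. (For $m\geq4$ no such issue arises, since $a_{11}^{\alpha\alpha}\to\infty$ for the rotational modes and their contribution is retained in \eqref{formula d3m4} through $F(\kappa,\varepsilon,m)$.)
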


\begin{remark}
The above results, together with Theorem \ref{thmhigher}, imply that the blow-up rate of $|\nabla u|$ depends on the space dimension $d$, the convexity order $m$, and the first term's coefficient $\kappa$.  Furthermore, when $m\geq d+1$, we do not need the symmetric assumption about domains and the boundary data, since in this case, $C_{1}^{\alpha}-C_{2}^{\alpha}$ tends to zero as $\varepsilon\rightarrow0$; see the proof of Theorem \ref{thmhigher} and Theorem \ref{thmhigher2} in Section \ref{proof general}. For more generalized cases, we refer readers to Example \ref{example R3} below.  Our method can also be applied to study the cases in dimensions $d\geq4$, which is left to the interested readers.
\end{remark}

\subsection{An example}\label{subsec example}
Finally, we give an example in dimension three to show the dependence of the constants in above asymptotics upon the mean curvature of the inclusions more precisely.
\begin{example}\label{example R3}
For $m\geq2$, we denote the top and bottom boundaries of  $\Omega_{R}$ as
\begin{align*}
\Gamma_{R}^{+}=\left\{x\in\mathbb R^3~\big|~ x_{3}=\frac{\varepsilon}{2}+\frac{\kappa}{2}|x_{1}|^m+\frac{\kappa'}{2}|x_{2}|^m\right\}
\end{align*}
and
\begin{align*}
\Gamma_{R}^{-}=\left\{x\in\mathbb R^3~\big|~ x_{3}=-\frac{\varepsilon}{2}-\frac{\kappa}{2}|x_{1}|^m-\frac{\kappa'}{2}|x_{2}|^m\right\},
\end{align*}
where $\kappa$ and $\kappa'$ are two positive constants, may different. For $\theta\in[0,2\pi]$, denote
$$E_{m}(\theta):=\sin^{\frac{2}{m}-1}\theta\cos^{\frac{2}{m}+1}\theta+\cos^{\frac{2}{m}-1}\theta\sin^{\frac{2}{m}+1}\theta,$$
$$F_{m}(\theta):=\left(\frac{\cos^{2}\theta}{\kappa}\right)^{2/m}+\left(\frac{\sin^{2}\theta}{\kappa'}\right)^{2/m},$$
and
$$\quad G_{m}:=\int_{0}^{2\pi}E_{m}(\theta)\ d\theta,\quad \widetilde{G}_{m}:=\int_{0}^{2\pi}E_{m}(\theta)F_{m}(\theta)\ d\theta.$$
Then the results in Theorems \ref{thm2} and \ref{thmhigher2} hold true, except that
	
(i) if $m=2$, $\kappa$ in \eqref{asymp Du R3} is replaced by $\sqrt{\kappa\kappa'}$, the square root of the relative Gauss curvature of $\partial D_{1}$ and $\partial D_{2}$; if $m=3$, $\frac{\kappa^{2/3}}{\pi}$ in \eqref{formula 33} becomes $\frac{3(\kappa\kappa')^{1/3}}{2 G_{3}}$;
	
(ii) if $m\geq4$, the terms $\frac{\kappa^{2/m}}{\pi}$ and $\frac{\kappa^{4/m}}{\pi}$ in \eqref{formula d3m4} become $\frac{m(\kappa\kappa')^{1/m}}{G_{m}}$ and $\frac{m(\kappa\kappa')^{2/m}}{\widetilde{G}_{m}}$, respectively.
\end{example}

\section{Proof of Proposition \ref{prop a11} and Proposition \ref{prop converge b}}\label{sec pf prop a11 b}
This section is devoted to proving Proposition \ref{prop a11} and Proposition \ref{prop converge b}, for $d=2,3$, which are two main ingredients to establish our asymptotic formulas of $\nabla u$. Actually, our argument also holds in higher dimensions $d\geq4$ with a slight modification. We first need some preliminary estimates on $v_{1}^{\alpha}$, $\alpha=1,\cdots,d(d+1)/2$.

\subsection{Auxiliary estimates for $v_{1}^{\alpha}$}

Suppose $v_{1}^{*\alpha}$ satisfies
\begin{equation}\label{equ_vi*alpha}
\begin{cases}
\mathcal{L}_{\lambda,\mu}v_{1}^{*\alpha}=0,&\mathrm{in}~\Omega^{*},\\
v_{1}^{*\alpha}=\psi_{\alpha},&\mathrm{on}~\partial{D}_{1}^{*}\setminus\{0\},\\
v_{1}^{*\alpha}=0,&\mathrm{on}~\partial{D_{2}^{*}}\cup\partial{D},
\end{cases}
\quad\alpha=1,\cdots,d(d+1)/2.
\end{equation}	
We will prove that $v_{1}^{\alpha}\rightarrow v_{1}^{*\alpha}$, for each $\alpha$, with proper convergence rates.

Define
$$V:=D\setminus\overline{D_{1}\cup D_{2}\cup D_{1}^{*}\cup D_{2}^{*}},$$
see Figure \ref{touch}, and
$$\mathcal{C}_{r}:=\left\{x\in\mathbb R^{d}\big| |x'|<r,~-\frac{\varepsilon}{2}+2\min_{|x'|=r}h_{2}(x')\leq x_{d}\leq\frac{\varepsilon}{2}+2\max_{|x'|=r}h_{1}(x')\right\},\quad r<R.$$
Then we have
\begin{lemma}\label{lem difference v11}
	Let $v_{1}^{\alpha}$ and $v_{1}^{*\alpha}$ satisfy \eqref{equ_v1} and \eqref{equ_vi*alpha}, respectively. Then we have
	\begin{align}\label{difference v11}
	|(v_{1}^{\alpha}-v_{1}^{*\alpha})(x)|\leq C\varepsilon^{1/2},\quad \alpha=1,\cdots,d,\quad x\in V\setminus \mathcal{C}_{\varepsilon^{1/4}},
	\end{align}
	and
	\begin{align}\label{difference v13}
	|(v_{1}^{\alpha}-v_{1}^{*\alpha})(x)|\leq C\varepsilon^{2/3},\quad \alpha=d+1,\cdots,d(d+1)/2,\quad x\in V\setminus \mathcal{C}_{\varepsilon^{1/3}},
	\end{align}
	where $C$ is a {\it universal constant}. 
\end{lemma}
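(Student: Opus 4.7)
The plan is to pointwise bound $w^\alpha := v_{1}^{\alpha} - v_{1}^{*\alpha}$ on the boundary of $V \setminus \mathcal{C}_{r_\alpha}$, where $r_\alpha = \varepsilon^{1/4}$ if $\alpha \leq d$ and $r_\alpha = \varepsilon^{1/3}$ if $\alpha \geq d+1$, and then propagate that boundary bound into the interior via an $L^{\infty}$ estimate for the homogeneous Lam\'e system $\mathcal{L}_{\lambda,\mu} w^\alpha = 0$ (the ADN-type maximum-modulus estimate, applied after a mild smoothing at the corners $\partial V \cap \partial \mathcal{C}_{r_\alpha}$). The lemma thus reduces to two boundary-estimate steps, one on the ``natural'' boundary $\partial V$ and one on the inner cylindrical slice.

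\emph{Step 1: on $\partial V \setminus \mathcal{C}_{r_\alpha}$.} On $\partial D \cup \partial D_{2}$ both $v_{1}^{\alpha}$ and $v_{1}^{*\alpha}$ vanish, so $w^\alpha \equiv 0$. The remaining boundary pieces lie in $\partial D_{1} \cup \partial D_{1}^{*}$; near the origin $D_{1} \subset D_{1}^{*}$, so locally only $\partial D_{1}^{*}$ contributes. For $Q = (x', h_{1}(x')) \in \partial D_{1}^{*}$ with $|x'| \geq r_\alpha$, set $P := Q + (0', \varepsilon/2) \in \partial D_{1}$; the vertical segment $\overline{QP}$ lies in $D_{1}^{*} \setminus \overline{D_{1}} \subset \Omega$. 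Since $v_{1}^{\alpha}(P) = \psi_{\alpha}(P)$ and $v_{1}^{*\alpha}(Q) = \psi_{\alpha}(Q)$, I would split
$$
  w^\alpha(Q) = \bigl[v_{1}^{\alpha}(Q) - v_{1}^{\alpha}(P)\bigr] + \bigl[\psi_{\alpha}(P) - \psi_{\alpha}(Q)\bigr],
$$
and control the first bracket by $(\varepsilon/2) \sup_{\overline{QP}} |\nabla v_{1}^{\alpha}|$. Combining the gradient expansions of Theorems \ref{coro v1}--\ref{coro v1d+1} with $|\nabla \bar{u}| \leq C/\delta(x')$ gives $|\nabla v_{1}^{\alpha}| \leq C/\delta(x')$ for $\alpha \leq d$ and $|\nabla v_{1}^{\alpha}| \leq C|x'|/\delta(x') + C$ for $\alpha \geq d+1$. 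Together with $\delta(x') \geq \kappa|x'|^{2}/2$, this yields first-bracket contributions of size $C\varepsilon/\delta(x') \leq C\sqrt{\varepsilon}$ on $|x'| \geq \varepsilon^{1/4}$ and $C\varepsilon|x'|/\delta(x') \leq C\varepsilon^{2/3}$ on $|x'| \geq \varepsilon^{1/3}$. The second bracket vanishes for constant $\psi_{\alpha}$ ($\alpha \leq d$) and is at most $C\varepsilon$ otherwise, which is dominated.

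\emph{Step 2: on $V \cap \partial \mathcal{C}_{r_\alpha}$.} This is the delicate step, since each of $v_{1}^{\alpha}$ and $v_{1}^{*\alpha}$ is individually of size $O(1)$ on the cylindrical slice, so only a cancellation can produce a gain. I would invoke Theorems \ref{coro v1}--\ref{coro v1d+1} to write $v_{1}^{\alpha} = u_{1}^{\alpha} + E^\alpha$ with $\|E^\alpha\|_{L^\infty(\Omega_R)} \leq C\delta(x')$, obtained by integrating the $O(1)$ error $\nabla v_{1}^{\alpha} - \nabla u_{1}^{\alpha}$ from the top surface $\partial D_{1}$ (where it vanishes by construction) over a vertical distance at most $\delta(x')$. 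The same replacement for $v_{1}^{*\alpha}$ uses the limit auxiliary function $u_{1}^{*\alpha}$ built from $\bar{u}^{*}(x) := (x_{d} - h_{2}(x'))/(h_{1}(x') - h_{2}(x'))$ and $\delta^{*}(x') := h_{1}(x') - h_{2}(x')$. A direct computation gives
$$
  \bar{u}(x) - \bar{u}^{*}(x) = \frac{\varepsilon\bigl[\delta^{*}(x') - 2(x_{d} - h_{2}(x'))\bigr]}{2\,\delta(x')\,\delta^{*}(x')},
$$
which is bounded by $C\varepsilon/\delta^{*}(x')$; multiplying by $|\psi_\alpha|$ (which is $O(1)$ for $\alpha \leq d$ and $O(|x'|)$ for $\alpha \geq d+1$) produces exactly the target bounds $C\sqrt{\varepsilon}$ at $|x'| = \varepsilon^{1/4}$ and $C\varepsilon^{2/3}$ at $|x'| = \varepsilon^{1/3}$. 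The corrector terms $\tilde{u}_{1}^{\alpha}$ contribute only lower-order pieces handled in the same manner.

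The main obstacle will be Step 2: absent the refined auxiliary representation of Theorems \ref{coro v1}--\ref{coro v1d+1}, the cylindrical slice is essentially invisible to direct estimates because both $v_{1}^{\alpha}$ and $v_{1}^{*\alpha}$ are of size $O(1)$ there. Extracting the extra factor $\varepsilon$ requires exactly the explicit algebraic form of $\bar{u}$ versus $\bar{u}^{*}$, which distinguishes the $\varepsilon$-apart configuration from the touching one. Once the bounds from Steps 1 and 2 are combined to give a uniform $O(\sqrt{\varepsilon})$ (respectively $O(\varepsilon^{2/3})$) bound on $\partial(V \setminus \mathcal{C}_{r_\alpha})$, the $L^\infty$ maximum modulus estimate for the Lam\'e system promotes it to the claimed interior pointwise bound.
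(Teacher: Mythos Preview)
Your proposal is correct and follows essentially the same strategy as the paper: bound $v_1^\alpha - v_1^{*\alpha}$ on $\partial(V\setminus\mathcal{C}_{r_\alpha})$ via the mean value theorem together with the gradient expansions of Theorems~\ref{coro v1}--\ref{coro v1d+1} (using the explicit auxiliary functions $u_1^\alpha$ and $u_1^{*\alpha}$ to extract the $\varepsilon$-gain on the cylindrical slice), and then invoke the maximum principle for the Lam\'e system from \cite{mmn}. The only cosmetic difference is that the paper estimates $\partial_{x_d}(v_1^\alpha - v_1^{*\alpha})$ on $\{|x'|=r_\alpha\}$ and integrates vertically, whereas you estimate the three pieces $|v_1^\alpha-u_1^\alpha|$, $|u_1^\alpha-u_1^{*\alpha}|$, $|u_1^{*\alpha}-v_1^{*\alpha}|$ directly; also, you should explicitly note (as the paper does) that on the far piece $\partial D_1\setminus D_1^*\subset\Omega^*\setminus\Omega_R^*$ the bound $|w^\alpha|\leq C\varepsilon$ follows from $|\nabla v_1^{*\alpha}|\leq C$ there.
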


\begin{proof}
For $\varepsilon=0$, we  define similarly the auxiliary functions, $\bar{u}^{*}$ and $u_{1}^{*\alpha}$, as limits of $\bar{u}$ and $u_{1}^{\alpha}$, where $\bar{u}^{*}\in C^2(\mathbb{R}^d)$ satisfies $\bar{u}^{*}=1$ on $\partial{D}_{1}^{*}$, $\bar{u}^{*}=0$ on $\partial{D}_{2}^{*}\cup\partial{D}$ and
	\begin{equation*}
	\bar{u}^{*}=\frac{x_{d}-h_2(x')}{h_1(x')-h_2(x')},\quad\hbox{in}\ \Omega_{2R}^{*},\quad \|\bar{u}^{*}\|_{C^{2}(\Omega^{*}\setminus\Omega_{R}^{*})}\leq\,C,
	\end{equation*}
	where $\Omega_r^{*}:=\left\{(x',x_{d})\in \mathbb{R}^{d}~\big|~h_{2}(x')<x_{d}<h_{1}(x'),~|x'|<r\right\}$, $r<R$. A direct computation yields
	\begin{equation}\label{es bar u}
	|\partial_{x'}\bar{u}(x)|\leq\frac{C|x'|}{\varepsilon+|x'|^2},\qquad \partial_{x_{d}}\bar{u}(x)=\frac{1}{\delta(x')},\quad x\in\Omega_{R},
	\end{equation}
	and 
	\begin{equation}\label{es bar u*}
	|\partial_{x'}\bar{u}^{*}(x)|\leq\frac{C}{|x'|},\quad \frac{1}{C|x'|^2}\leq\partial_{x_{d}}\bar{u}^{*}(x)\leq\frac{C}{|x'|^2},\quad x\in\Omega_R^{*}.
	\end{equation}
Recalling the construction of $u_{1}^{\alpha}$ in \eqref{auxiliary improved}, \eqref{auxiliary improved dim3}, and \eqref{auxi fun d+1}, we can construct $u_{1}^{*\alpha}$ in the same way.

\begin{figure}[t]
	\begin{minipage}[c]{0.9\linewidth}
		\centering
		\includegraphics[width=2.5in]{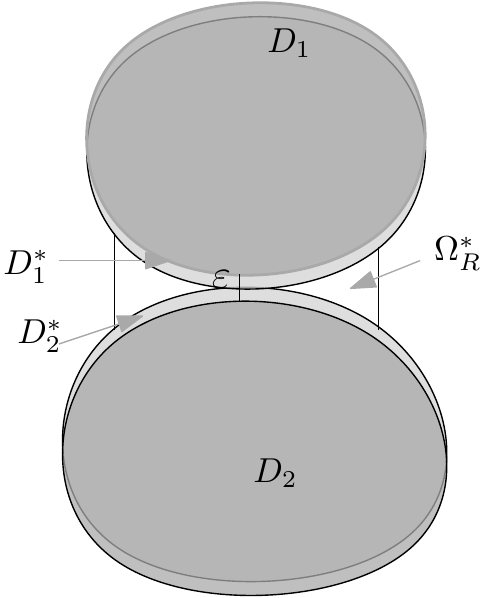}
		\caption{Region \small $\Omega_{R}^{*}$.}
		\label{touch}
	\end{minipage}
\end{figure}

{\bf Case 1.} $\alpha=1,\cdots,d$. By using  \eqref{auxiliary improved}, \eqref{auxiliary improved dim3}, \eqref{es bar u} and \eqref{es bar u*}, we have for $x\in \Omega_R^{*}$,
\begin{align}\label{difference bar u11}
|\partial_{x_{d}}(u_{1}^{\alpha}-u_{1}^{*\alpha})|&\leq|\partial_{x_{d}}(\bar{u}_{1}^{\alpha}-\bar{u}_{1}^{*\alpha})|+|\partial_{x_{d}}(\tilde{u}_{1}^{\alpha}-\tilde{u}_{1}^{*\alpha})|\nonumber\\
&\leq\frac{C\varepsilon}{|x'|^{2}(\varepsilon+|x'|^{2})}+\frac{C\varepsilon}{|x'|^{3}}\leq \frac{C\varepsilon}{|x'|^{2}(\varepsilon+|x'|^{2})}\left(1+\frac{\varepsilon}{|x'|}\right).
\end{align}
	
	Notice that $v_{1}^{\alpha}-v_{1}^{*\alpha}$ satisfies
	\begin{equation*}
	\begin{cases}
	\mathcal{L}_{\lambda,\mu}(v_{1}^{\alpha}-v_{1}^{*\alpha})=0,&\mathrm{in}~V,\\
	v_{1}^{\alpha}-v_{1}^{*\alpha}=\psi_{\alpha}-v_{1}^{*\alpha},&\mathrm{on}~\partial{D}_{1}\setminus D_{1}^{*},\\
	v_{1}^{\alpha}-v_{1}^{*\alpha}=-v_{1}^{*\alpha},&\mathrm{on}~\partial{D}_{2}\setminus D_{2}^{*},\\
	v_{1}^{\alpha}-v_{1}^{*\alpha}=v_{1}^{\alpha}-\psi_{\alpha},&\mathrm{on}~\partial{D}_{1}^{*}\setminus(D_{1}\cup\{0\}),\\
	v_{1}^{\alpha}-v_{1}^{*\alpha}=v_{1}^{\alpha},&\mathrm{on}~\partial{D}_{2}^{*}\setminus D_{2},\\
	v_{1}^{\alpha}-v_{1}^{*\alpha}=0,&\mathrm{on}~\partial{D}.
	\end{cases}
	\end{equation*}
	By \eqref{mv1--bounded1}, we have
	\begin{equation}\label{out v11 *}
	|\partial_{x_{d}}v_{1}^{*\alpha}|\leq C,\quad\mbox{in}~\Omega^{*}\setminus\Omega_{R}^{*}.
	\end{equation}
	For $x\in\partial{D}_{1}\setminus D_{1}^{*}\subset\Omega^{*}\setminus\Omega_{R}^{*}$ (see Figure \ref{touch}), by using mean value theorem and \eqref{out v11 *}, 
	\begin{align}\label{partial D11}
	|(v_{1}^{\alpha}-v_{1}^{*\alpha})(x',x_{d})|&=|(\psi_{\alpha}-v_{1}^{*\alpha})(x',x_{d})|\nonumber\\
	&=|v_{1}^{*\alpha}(x',x_{d}-\varepsilon/2)-v_{1}^{*\alpha}(x',x_{d})|\leq C\varepsilon.
	\end{align}
	For $x\in\partial{D}_{1}^{*}\setminus(D_{1}\cup\mathcal{C}_{\varepsilon^{\theta}})$, by mean value theorem again and Theorem \ref{coro v1}, we have
	\begin{align}\label{partial D11*}
	|(v_{1}^{\alpha}-v_{1}^{*\alpha})(x',x_{d})|&=|(v_{1}^{\alpha}-\psi_{\alpha})(x',x_{d})|\nonumber\\
	&=|v_{1}^{\alpha}(x',x_{d})-v_{1}^{\alpha}(x',x_{d}+\varepsilon/2)|\leq\frac{C\varepsilon}{\varepsilon+|x'|^{2}}\leq C\varepsilon^{1-2\theta},
	\end{align}
	where $0<\theta<1$ is some constant to be determined later. Similarly, for $x\in\partial{D}_{2}\setminus D_{2}^{*}$, we have
	\begin{align}\label{partial D21}
	|(v_{1}^{\alpha}-v_{1}^{*\alpha})(x',x_{d})|\leq C\varepsilon,
	\end{align}
	and for $x\in\partial{D}_{2}^{*}\setminus(D_{2}\cup\mathcal{C}_{\varepsilon^{\theta}})$, we have
	\begin{align}\label{partial D21*}
	|(v_{1}^{\alpha}-v_{1}^{*\alpha})(x',x_{d})|\leq C\varepsilon^{1-2\theta}.
	\end{align}
	
	For $x\in\Omega_{R}^{*}$ with $|x'|=\varepsilon^{\theta}$, it follows from Theorem \ref{coro v1} and \eqref{difference bar u11} that
	\begin{align}\label{partial x2 v11}
	\left|\partial_{x_{d}}(v_{1}^{\alpha}-v_{1}^{*\alpha})(x',x_{d})\right|
	&=\left|\partial_{x_{d}}(v_{1}^{\alpha}-u_{1}^{\alpha})+\partial_{x_{d}}(u_{1}^{\alpha}-u_{1}^{*\alpha})
	+\partial_{x_{d}}(u_{1}^{*\alpha}-v_{1}^{*\alpha})\right|(x',x_{d})\nonumber\\
	&\leq C\left(1+\frac{\varepsilon}{|x'|^{2}(\varepsilon+|x'|^{2})}\right)\leq C\left(1+\frac{1}{\varepsilon^{4\theta-1}}\right).
	\end{align}
	Thus, for $x\in\Omega_{R}^{*}$ with $|x'|=\varepsilon^{\theta}$, by using the triangle inequality, \eqref{partial D21*}, the mean value theorem, and \eqref{partial x2 v11}, we have
	\begin{align}\label{es v11*}
	\left|(v_{1}^{\alpha}-v_{1}^{*\alpha})(x',x_{d})\right|&\leq\left|(v_{1}^{\alpha}-v_{1}^{*\alpha})(x',x_{d})-(v_{1}^{\alpha}-v_{1}^{*\alpha})(x',h_{2}(x')\right|
	+C\varepsilon^{1-2\theta}\nonumber\\
	&\leq\left|\partial_{x_{d}}(v_{1}^{\alpha}-v_{1}^{*\alpha})\right|\cdot(h_{1}(x')-h_{2}(x'))\Big|_{|x'|=\varepsilon^{\theta}}+C\varepsilon^{1-2\theta}\nonumber\\
	&\leq C\left(\varepsilon^{2\theta}+\varepsilon^{1-2\theta}\right).
	\end{align}
By taking $2\theta=1-2\theta$, we get $\theta=\frac{1}{4}$. Substituting it into \eqref{partial D11*}, \eqref{partial D21*}, and \eqref{es v11*}, and using \eqref{partial D11}, \eqref{partial D21}, and $v_{1}^{\alpha}-v_{1}^{*\alpha}=0$ on $\partial D$, we obtain
	\begin{align}\label{difference v11 boundary}
	|v_{1}^{\alpha}-v_{1}^{*\alpha}|\leq C\varepsilon^{1/2},\quad\mbox{on}~\partial{(V\setminus\mathcal{C}_{\varepsilon^{1/4}})}.
	\end{align}
	Applying the maximum principle for Lam\'{e} systems in $V\setminus\mathcal{C}_{\varepsilon^{1/4}}$, see \cite{mmn}, we get
	\begin{align*}
	|(v_{1}^{\alpha}-v_{1}^{*\alpha})(x)|\leq C\varepsilon^{1/2},\quad\mbox{in}~ V\setminus \mathcal{C}_{\varepsilon^{1/4}}.
	\end{align*}
	
	{\bf Case 2.} $\alpha=d+1,\cdots,d(d+1)/2$. By using  \eqref{auxi fun d+1}, \eqref{es bar u}, and \eqref{es bar u*}, we obtain for $x\in \Omega_R^{*}$,
	\begin{align}\label{difference bar u13}
	|\partial_{x_{d}}(u_{1}^{\alpha}-u_{1}^{*\alpha})|&\leq|\partial_{x_{d}}(\bar{u}-\bar{u}^{*})\psi_{\alpha}|+|(\bar{u}-\bar{u}^{*})\partial_{x_{d}}\psi_{\alpha}|\nonumber\\
	&\leq C\left(1+\frac{\varepsilon}{|x'|(\varepsilon+|x'|^{2})}\right).
	\end{align}
	By using Theorem \ref{coro v1d+1}, we find that \eqref{partial D11*} and \eqref{partial D21*} become
	\begin{align*}
	|(v_{1}^{\alpha}-v_{1}^{*\alpha})(x',x_{d})|\leq C\varepsilon^{1-\theta},\quad x\in\Big(\partial{D}_{1}^{*}\setminus(D_{1}\cup\mathcal{C}_{\varepsilon^{\theta}})\Big)\cup\Big(\partial{D}_{2}^{*}\setminus(D_{2}\cup\mathcal{C}_{\varepsilon^{\theta}})\Big),
	\end{align*}
	where $0<\theta<1$ is some constant to be fixed later. For $x\in\Omega_{R}^{*}$ with $|x'|=\varepsilon^{\theta}$,  \eqref{es v11*} becomes
	\begin{align*}
	\left|(v_{1}^{\alpha}-v_{1}^{*\alpha})(x',x_{d})\right|\leq C\left(\varepsilon^{2\theta}+\varepsilon^{1-\theta}\right).
	\end{align*}
	By taking $2\theta=1-\theta$, we get $\theta=\frac{1}{3}$. We henceforth obtain
	\begin{align*}
	|v_{1}^{\alpha}-v_{1}^{*\alpha}|\leq C\varepsilon^{2/3},\quad\mbox{on}~(V\setminus\mathcal{C}_{\varepsilon^{1/3}}).
	\end{align*}
	The proof of Lemma \ref{lem difference v11} is finished.
\end{proof}

\subsection{Convergence of $\frac{C_{1}^{\alpha}+C_{2}^{\alpha}}{2}-C_{*}^{\alpha}$, $\alpha=1,\cdots,d(d+1)/2$}
We use the notation in \cite{l} in the following. 

It follows from \eqref{nablau_dec} and the forth line of \eqref{maineqn} that
\begin{align}\label{C1C2_d}
\left\{
\begin{aligned}
\sum_{\alpha=1}^{d(d+1)/2}C_{1}^{\alpha}a_{11}^{\alpha\beta}+\sum_{\alpha=1}^{d(d+1)/2}C_{2}^{\alpha}a_{21}^{\alpha\beta}-\tilde{b}_{1}^{\beta}&=0,\\
\sum_{\alpha=1}^{d(d+1)/2}C_{1}^{\alpha}a_{12}^{\alpha\beta}+\sum_{\alpha=1}^{d(d+1)/2}C_{2}^{\alpha}a_{22}^{\alpha\beta}-\tilde{b}_{2}^{\beta}&=0,
\end{aligned}
\right.\quad\quad~~\beta=1,\cdots,d(d+1)/2,
\end{align}
where $a_{ij}^{\alpha\beta}$ is defined in \eqref{def_aij}, and 
\begin{equation}\label{bjbeta}
\tilde{b}_{j}^{\beta}=\int_{\partial D_j}\frac{\partial v_0}{\partial \nu}\Big|_+\cdot \psi_\beta.
\end{equation}  

For the first equation of \eqref{C1C2_d}, we have
\begin{equation*}
\sum_{\alpha=1}^{3}(C_{1}^{\alpha}+C_{2}^{\alpha})a_{11}^{\alpha\beta}+\sum_{\alpha=1}^{3}C_{2}^{\alpha}(a_{21}^{\alpha\beta}-a_{11}^{\alpha\beta})-\tilde{b}_{1}^{\beta}=0,
\end{equation*}
and
\begin{equation*}
\sum_{\alpha=1}^{3}(C_{1}^{\alpha}+C_{2}^{\alpha})a_{21}^{\alpha\beta}+\sum_{\alpha=1}^{3}C_{1}^{\alpha}(a_{11}^{\alpha\beta}-a_{21}^{\alpha\beta})-\tilde{b}_{1}^{\beta}=0.
\end{equation*}
Adding these two equations together leads to
\begin{equation}\label{C1C2_1}
\sum_{\alpha=1}^{3}(C_{1}^{\alpha}+C_{2}^{\alpha})(a_{11}^{\alpha\beta}+a_{21}^{\alpha\beta})+\sum_{\alpha=1}^{3}(C_{1}^{\alpha}-C_{2}^{\alpha})(a_{11}^{\alpha\beta}-a_{21}^{\alpha\beta})-2\tilde{b}_{1}^{\beta}=0.
\end{equation}
Similarly, for the second equation of \eqref{C1C2_d}, we have 
\begin{equation}\label{C1C2_22}
\sum_{\alpha=1}^{3}(C_{1}^{\alpha}+C_{2}^{\alpha})(a_{12}^{\alpha\beta}+a_{22}^{\alpha\beta})+\sum_{\alpha=1}^{3}(C_{1}^{\alpha}-C_{2}^{\alpha})(a_{12}^{\alpha\beta}-a_{22}^{\alpha\beta})-2\tilde{b}_{2}^{\beta}=0.
\end{equation}
Then from \eqref{C1C2_1} and \eqref{C1C2_22}, we obtain
\begin{equation}\label{C1+C2_1}
\sum_{\alpha=1}^{3}\frac{C_{1}^{\alpha}+C_{2}^{\alpha}}{2}a^{\alpha\beta}
+\sum_{\alpha=1}^{3}\frac{C_{1}^{\alpha}-C_{2}^{\alpha}}{2}(a_{11}^{\alpha\beta}-a_{22}^{\alpha\beta}+a_{12}^{\alpha\beta}-a_{21}^{\alpha\beta})-(\tilde{b}_{1}^{\beta}+\tilde{b}_{2}^{\beta})=0.
\end{equation}
where 
$$a^{\alpha\beta}=\sum_{i,j=1}^{2}a_{ij}^{\alpha\beta}=-\left(\int_{\partial{D}_{1}}\frac{\partial{v}^{\alpha}}{\partial\nu}\big|_{+}\cdot\psi_{\beta}+\int_{\partial{D}_{2}}\frac{\partial{v}^{\alpha}}{\partial\nu}\big|_{+}\cdot\psi_{\beta}\right).$$
and $v^{\alpha}:=v_{1}^{\alpha}+v_{2}^{\alpha}$ satisfies
\begin{equation}\label{def valpha}
\begin{cases}
\mathcal{L}_{\lambda,\mu}v^{\alpha}=0,&\mathrm{in}~\Omega,\\
v^{\alpha}=\psi_{\alpha},&\mathrm{on}~\partial{D}_{1}\cup\partial D_{2},\\
v^{\alpha}=0,&\mathrm{on}~\partial{D}.
\end{cases}
\end{equation}

Let $v^{*\alpha}:=v_{1}^{*\alpha}+v_{2}^{*\alpha}$ satisfy
\begin{equation}\label{def valpha*}
\begin{cases}
\mathcal{L}_{\lambda,\mu}v^{*\alpha}=0,&\mathrm{in}~\Omega^{*},\\
v^{*\alpha}=\psi_{\alpha},&\mathrm{on}~\partial{D}_{1}^{*}\cup\partial{D}_{2}^{*},\\
v^{*\alpha}=0,&\mathrm{on}~\partial{D}.
\end{cases}
\end{equation}
and
$v_{0}^{*}$ satisfy
\begin{equation}\label{equ_v3*}
\begin{cases}
\mathcal{L}_{\lambda,\mu}v_{0}^{*}=0,&\mathrm{in}~\Omega^{*},\\
v_{0}^{*}=0,&\mathrm{on}~\partial{D}_{1}^{*}\cup\partial{D_{2}^{*}},\\
v_{0}^{*}=\varphi,&\mathrm{on}~\partial{D},
\end{cases}
\end{equation}
By the definition of $u^{*}$ in \eqref{maineqn touch}, we find that
$$u^{*}=\sum_{\alpha=1}^{3}C_{*}^{\alpha}v^{*\alpha}+v_{0}^{*}.$$
From the third line of \eqref{maineqn touch}, we have
\begin{align}\label{equ_C*alpha}
&\sum_{\alpha=1}^{3}C_{*}^{\alpha}a_{*}^{\alpha\beta}-(\tilde{b}_{1}^{*\beta}+\tilde{b}_{2}^{*\beta})=0,\quad\beta=1,2,3,
\end{align}
where
\begin{align}\label{b*jbeta}
a_{*}^{\alpha\beta}=-\int_{\partial{D}_{1}^{*}\cup\partial{D}_{2}^{*}}\frac{\partial{v}^{*\alpha}}{\partial\nu}\big|_{+}\cdot\psi_{\beta},\quad
\mbox{and}~~\tilde{b}_{j}^{*\beta}=\int_{\partial{D}_{j}^{*}}\frac{\partial{v}_{0}^{*}}{\partial\nu}\big|_{+}\cdot\psi_{\beta},\quad j=1,2.
\end{align}

We next use the symmetry properties of the domain and the boundary data to prove the convergence of $\frac{C_{1}^{\alpha}+C_{2}^{\alpha}}{2}-C_{*}^{\alpha}$, Proposition \ref{conv C alpha} below. Before this, we first prove the following two lemmas. 

\begin{lemma}\label{es b1 b1* beta=1}
	Let $\tilde{b}_{1}^{\beta}$ and $\tilde{b}_{1}^{*\beta}$ be defined in \eqref{bjbeta} and \eqref{b*jbeta}, respectively. Then 
	\begin{equation}\label{est v0-v0*}
	\left|\tilde{b}_{1}^{\beta}-\tilde{b}_{1}^{*\beta}\right|\leq C\|\varphi\|_{L^{1}(\partial D)}
	\begin{cases}
	\varepsilon^{1/2},~\beta=1,\cdots,d,\\
	\varepsilon^{2/3},~\beta=d+1,\cdots,\frac{d(d+1)}{2}.
	\end{cases}
	\end{equation}
\end{lemma}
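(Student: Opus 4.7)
The plan is to rewrite both $\tilde{b}_{1}^{\beta}$ and $\tilde{b}_{1}^{*\beta}$ as integrals over the outer boundary $\partial D$ via Green's second identity for the Lam\'{e} operator. After this reduction the difference becomes a boundary integral on $\partial D$ of $\varphi$ paired against the co-normal derivative of $v_{1}^{\beta}-v_{1}^{*\beta}$, which is evaluated in a region far from the narrow neck where Lemma \ref{lem difference v11} applies.

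First I would use the symmetry of the bilinear form $(\mathbb{C}^{0}e(\cdot),e(\cdot))$: for any $\mathcal{L}_{\lambda,\mu}$-harmonic $u,w\in H^{1}(\Omega;\mathbb{R}^{d})$,
\begin{equation*}
\int_{\partial\Omega}\frac{\partial u}{\partial\nu_{\Omega}}\cdot w\,dS=\int_{\Omega}(\mathbb{C}^{0}e(u),e(w))\,dx=\int_{\partial\Omega}\frac{\partial w}{\partial\nu_{\Omega}}\cdot u\,dS,
\end{equation*}
where $\nu_{\Omega}$ is the outward co-normal of $\Omega$ (so on $\partial D_{i}$ it is opposite to the $\vec{n}$ of the paper). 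Plugging in $u=v_{0}$, $w=v_{1}^{\beta}$ and using the boundary data $v_{0}=0$ on $\partial D_{1}\cup\partial D_{2}$, $v_{0}=\varphi$ on $\partial D$, $v_{1}^{\beta}=\psi_{\beta}$ on $\partial D_{1}$, $v_{1}^{\beta}=0$ on $\partial D_{2}\cup\partial D$, only the $\partial D_{1}$ contribution on the left-hand side and the $\partial D$ contribution on the right-hand side survive. After matching the normals this yields
\begin{equation*}
\tilde{b}_{1}^{\beta}=-\int_{\partial D}\varphi\cdot\frac{\partial v_{1}^{\beta}}{\partial\nu}\,dS,\qquad \tilde{b}_{1}^{*\beta}=-\int_{\partial D}\varphi\cdot\frac{\partial v_{1}^{*\beta}}{\partial\nu}\,dS,
\end{equation*}
the second identity following from the same argument carried out in $\Omega^{*}$. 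Subtracting,
\begin{equation*}
\tilde{b}_{1}^{\beta}-\tilde{b}_{1}^{*\beta}=-\int_{\partial D}\varphi\cdot\frac{\partial(v_{1}^{\beta}-v_{1}^{*\beta})}{\partial\nu}\,dS.
\end{equation*}

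Next I would estimate this co-normal derivative in $L^{\infty}(\partial D)$. Thanks to the standing hypothesis $\mathrm{dist}(D_{1}\cup D_{2},\partial D)\geq\kappa_{0}>0$, and since $\mathcal{C}_{\varepsilon^{1/4}}$ shrinks to the origin as $\varepsilon\downarrow 0$, there is a fixed tubular neighborhood $U$ of $\partial D$ contained in $\Omega\cap\Omega^{*}$ that lies inside $V\setminus\mathcal{C}_{\varepsilon^{1/4}}$ (resp.\ $V\setminus\mathcal{C}_{\varepsilon^{1/3}}$) for all small $\varepsilon$. The difference $v_{1}^{\beta}-v_{1}^{*\beta}$ is $\mathcal{L}_{\lambda,\mu}$-harmonic in $U$ and vanishes on $\partial D$, so standard boundary $W^{1,\infty}$ (Schauder) estimates for the Lam\'{e} system yield
\begin{equation*}
\bigl\|\nabla(v_{1}^{\beta}-v_{1}^{*\beta})\bigr\|_{L^{\infty}(\partial D)}\le C\,\bigl\|v_{1}^{\beta}-v_{1}^{*\beta}\bigr\|_{L^{\infty}(U)},
\end{equation*}
and Lemma \ref{lem difference v11} bounds the right-hand side by $C\varepsilon^{1/2}$ for $\beta\in\{1,\dots,d\}$ and by $C\varepsilon^{2/3}$ for $\beta\in\{d+1,\dots,d(d+1)/2\}$. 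Substituting back and pulling $\|\varphi\|_{L^{1}(\partial D)}$ out of the pairing produces exactly \eqref{est v0-v0*}.

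There is no serious obstacle: all of the $\varepsilon$-dependence has already been captured by Lemma \ref{lem difference v11}, and the only thing that must be verified is the uniform geometric separation between $\partial D$ and the narrow neck, which is automatic from the assumptions. The conceptual kernel of the argument is the Green's-identity trick that transports the boundary integrals from the interfaces $\partial D_{1}$, $\partial D_{1}^{*}$ to the common fixed outer boundary $\partial D$, where no concentration can occur.
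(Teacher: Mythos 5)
Your proposal is correct and follows essentially the same route as the paper: the paper also uses the symmetry of the bilinear form (integration by parts) to rewrite both $\tilde{b}_{1}^{\beta}$ and $\tilde{b}_{1}^{*\beta}$ as integrals of $\varphi$ against the co-normal derivatives of $v_{1}^{\beta}$ and $v_{1}^{*\beta}$ over $\partial D$, and then applies Lemma \ref{lem difference v11} together with the standard boundary gradient (Agmon--Douglis--Nirenberg/Schauder) estimates near $\partial D$, which is uniformly separated from the narrow neck. No gaps.
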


\begin{proof}
	It follows from \eqref{equ_v3}, \eqref{equ_v3*}, and the integration by parts that
	\begin{align*}
	\tilde{b}_{1}^{\beta}=\int_{\partial D_{1}}\frac{\partial v_{0}}{\partial \nu}\Big|_{+}\cdot\psi_{\beta}&=\int_{\partial D_{1}}\frac{\partial v_{0}}{\partial \nu}\Big|_{+}\cdot v_{1}^{\beta}=-\int_{\partial D}\frac{\partial v_{1}^{\beta}}{\partial \nu}\Big|_{+}\cdot\varphi,
	\end{align*}
	and
	\begin{align*}
	\tilde{b}_{1}^{*\beta}=\int_{\partial D_{1}^{*}}\frac{\partial v_{0}^{*}}{\partial \nu}\Big|_{+}\cdot\psi_{\beta}&=-\int_{\partial D}\frac{\partial v_{1}^{*\beta}}{\partial \nu}\Big|_{+}\cdot\varphi.
	\end{align*}
	Thus, we have
	\begin{equation}\label{difference v1}
\tilde{b}_{1}^{\beta}-\tilde{b}_{1}^{*\beta}=-\int_{\partial D}\frac{\partial(v_{1}^{\beta}-v_{1}^{*\beta})}{\partial \nu}\Big|_{+}\cdot\varphi.
	\end{equation}
	By using Lemma \ref{lem difference v11} and	the standard boundary gradient estimates for Lam\'e system (see \cite{adn}), we have \eqref{est v0-v0*}.
	We hence finish the proof of Lemma \ref{es b1 b1* beta=1}.
\end{proof}

Similar to Lemma \ref{es b1 b1* beta=1}, we can get 
\begin{lemma}\label{lem valpha1}
	Let $v^{\alpha}$ and $v^{*\alpha}$ be defined by \eqref{def valpha} and \eqref{def valpha*}, respectively, $\alpha=1,\cdots,d(d+1)/2$. Then
	\begin{align}\label{est v11 v11*}
	\left|\int_{\partial D_{1}}\frac{\partial v^{\alpha}}{\partial \nu}\Big|_{+}\cdot\psi_{\beta}-\int_{\partial D_{1}^{*}}\frac{\partial v^{*\alpha}}{\partial \nu}\Big|_{+}\cdot\psi_{\beta}\right|\leq
	C\|\psi_{\alpha}\|_{L^{\infty}(\partial D)}
	\begin{cases}
	\varepsilon^{1/2},~\beta=1,\cdots,d,\\
	\varepsilon^{2/3},~\beta=d+1,\cdots,\frac{d(d+1)}{2}.
	\end{cases}
	\end{align}
\end{lemma}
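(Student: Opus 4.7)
The plan is to mimic the proof of Lemma \ref{es b1 b1* beta=1} by first converting each of the two boundary integrals over $\partial D_1$ and $\partial D_1^*$ into an integral over the distant fixed outer boundary $\partial D$, so that their difference becomes a single integral on $\partial D$ that is controlled directly by the pointwise comparison between $v_1^\beta$ and $v_1^{*\beta}$ supplied by Lemma \ref{lem difference v11}. The target identity to establish is
\[
\int_{\partial D_1}\frac{\partial v^{\alpha}}{\partial\nu}\Big|_{+}\cdot\psi_{\beta}=\int_{\partial D}\frac{\partial v_{1}^{\beta}}{\partial\nu}\Big|_{+}\cdot\psi_{\alpha},
\]
together with its obvious starred counterpart.

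To derive this, I would perform two successive integration-by-parts steps. Since $v_1^\beta=\psi_\beta$ on $\partial D_1$ and $v_1^\beta=0$ on $\partial D_2\cup\partial D$, I first replace $\psi_\beta$ by $v_1^\beta$ in the integrand and apply Green's identity to $v^\alpha$ and $v_1^\beta$ (both Lam\'e-harmonic in $\Omega$), together with symmetry of the energy bilinear form, to transfer the co-normal derivative from $v^\alpha$ to $v_1^\beta$; inserting the boundary values $v^\alpha=\psi_\alpha$ on $\partial D_1\cup\partial D_2$ and $v^\alpha=0$ on $\partial D$ yields
\[
\int_{\partial D_1}\frac{\partial v^{\alpha}}{\partial\nu}\Big|_{+}\cdot\psi_{\beta}=\int_{\partial D_1}\frac{\partial v_{1}^{\beta}}{\partial\nu}\Big|_{+}\cdot\psi_{\alpha}+\int_{\partial D_2}\frac{\partial v_{1}^{\beta}}{\partial\nu}\Big|_{+}\cdot\psi_{\alpha}.
\]
Next I would test $v_1^\beta$ against the rigid motion $\psi_\alpha$: because $e(\psi_\alpha)=0$, the energy integral $\int_\Omega(\mathbb{C}^0 e(v_1^\beta),e(\psi_\alpha))\,dx$ vanishes, and Green's identity converts the sum of the two fluxes over $\partial D_1\cup\partial D_2$ into $\int_{\partial D}\frac{\partial v_1^\beta}{\partial\nu}|_+\cdot\psi_\alpha$. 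Combining the two steps gives the target identity; the identical manipulation in the touching configuration yields its analogue with $v_1^{*\beta}$.

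Subtracting the two identities then produces
\[
\int_{\partial D_{1}}\frac{\partial v^{\alpha}}{\partial \nu}\Big|_{+}\cdot\psi_{\beta}-\int_{\partial D_{1}^{*}}\frac{\partial v^{*\alpha}}{\partial \nu}\Big|_{+}\cdot\psi_{\beta}=\int_{\partial D}\frac{\partial(v_{1}^{\beta}-v_{1}^{*\beta})}{\partial\nu}\Big|_{+}\cdot\psi_{\alpha},
\]
which is structurally identical to \eqref{difference v1} with $\varphi$ replaced by the rigid motion $\psi_\alpha$. Since $\partial D$ lies at a positive distance from $\overline{D_{1}\cup D_{2}\cup D_{1}^{*}\cup D_{2}^{*}}$, it is contained in $V\setminus\mathcal{C}_{\varepsilon^{1/4}}$ and in $V\setminus\mathcal{C}_{\varepsilon^{1/3}}$, so Lemma \ref{lem difference v11} bounds $|v_1^\beta-v_1^{*\beta}|$ by $C\varepsilon^{1/2}$ for $\beta\le d$ and by $C\varepsilon^{2/3}$ for $\beta>d$ on a fixed neighborhood of $\partial D$. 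Standard boundary Schauder (Agmon-Douglis-Nirenberg) estimates for the Lam\'e system then upgrade this pointwise bound to the same rate for $|\partial_\nu(v_1^\beta-v_1^{*\beta})|$ along $\partial D$, and pulling $\|\psi_\alpha\|_{L^\infty(\partial D)}$ outside the integral delivers \eqref{est v11 v11*}. The one delicate technical point is the sign bookkeeping in the two Green's identities, since the outer normal to $\Omega$ on $\partial D_i$ is antiparallel to the $\nu$ defining $\frac{\partial}{\partial\nu}|_+$ while the two agree on $\partial D$; it is precisely this sign flip that produces the cancellation of the interior boundary fluxes against the $\partial D$ flux in the second step, and once it is tracked correctly the remainder of the argument is routine elliptic boundary regularity.
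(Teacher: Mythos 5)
Your proposal is correct and follows essentially the same route as the paper: both reduce the difference of the two interior fluxes to $\int_{\partial D}\frac{\partial(v_{1}^{\beta}-v_{1}^{*\beta})}{\partial\nu}\big|_{+}\cdot\psi_{\alpha}$ via Green's identities (the paper does this in one step by working with $v^{\alpha}-\psi_{\alpha}$, which vanishes on $\partial D_{1}\cup\partial D_{2}$, whereas you split it into two reciprocity identities, including the zero-total-flux identity for $v_1^\beta$ tested against the rigid motion $\psi_\alpha$ — an equivalent bookkeeping), and then both conclude with Lemma \ref{lem difference v11} near $\partial D$ together with standard boundary gradient (Agmon--Douglis--Nirenberg) estimates for the Lam\'e system.
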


\begin{proof}
	For $\alpha=1,\cdots,d(d+1)/2$, it follows from \eqref{def valpha} that
	\begin{equation*}
	\begin{cases}
	\mathcal{L}_{\lambda,\mu}(v^{\alpha}-\psi_{\alpha})=0,&\mbox{in}~\Omega,\\
	v^{\alpha}-\psi_{\alpha}=0,&\mbox{on}~\partial{D}_{1}\cup\partial{D}_{2},\\
	v^{\alpha}-\psi_{\alpha}=-\psi_{\alpha},&\mbox{on}~\partial D.
	\end{cases}
	\end{equation*}
	By using the integration by parts, we have for $\alpha=1,\cdots, d(d+1)/2$,
	\begin{align*}
	\int_{\partial D_{1}}\frac{\partial v^{\alpha}}{\partial \nu}\Big|_{+}\cdot\psi_{\beta}=\int_{\partial D_{1}}\frac{\partial (v^{\alpha}-\psi_{\alpha})}{\partial \nu}\Big|_{+}\cdot v_{1}^{\beta}=\int_{\partial D}\frac{\partial v_{1}^{\beta}}{\partial \nu}\Big|_{+}\cdot\psi_{\alpha}.
	\end{align*}
	Similarly,
	\begin{align*}
	\int_{\partial D_{1}^{*}}\frac{\partial v^{*\alpha}}{\partial \nu}\Big|_{+}\cdot\psi_{\beta}=\int_{\partial D}\frac{\partial v_{1}^{*\beta}}{\partial \nu}\Big|_{+}\cdot\psi_{\alpha}.
	\end{align*}
	Hence,
	\begin{equation*}
	\int_{\partial D_{1}}\frac{\partial v^{\alpha}}{\partial \nu}\Big|_{+}\cdot\psi_{\beta}-\int_{\partial D_{1}^{*}}\frac{\partial v^{*\alpha}}{\partial \nu}\Big|_{+}\cdot\psi_{\beta}=\int_{\partial D}\frac{\partial(v_{1}^{\beta}-v_{1}^{*\beta})}{\partial \nu}\Big|_{+}\cdot\psi_{\alpha}.
	\end{equation*}
	Thus, similar to  \eqref{est v0-v0*}, we obtain \eqref{est v11 v11*}. 
	The proof of Lemma \ref{lem valpha1} is finished.	
\end{proof}

\begin{prop}\label{conv C alpha}
For $d=2,3$, assume that $D_{1}\cup D_{2}$ and $D$ satisfies $({\rm S_{1}})$, and $\varphi$ satisfies $({\rm S_{2}})$ or $({\rm S_{3}})$. Let $C_{1}^{\alpha}, C_{2}^{\alpha}$, and $C_{*}^{\alpha}$ be defined in \eqref{Cialpha} and \eqref{maineqn touch}, respectively. Then
\begin{equation}\label{conv C C*}
\left|\frac{C_{1}^{\alpha}+C_{2}^{\alpha}}{2}-C_{*}^{\alpha}\right|\leq C\rho_{d}(\varepsilon),\quad \alpha=1,\cdots,d(d+1)/2.
\end{equation}
\end{prop}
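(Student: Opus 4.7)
The plan is to compare the linear system \eqref{C1+C2_1} governing $\frac{C_1^\alpha+C_2^\alpha}{2}$ with its limit counterpart \eqref{equ_C*alpha}. Subtracting the two yields, for each $\beta$,
\begin{equation*}
\sum_{\alpha}\Big(\tfrac{C_1^\alpha+C_2^\alpha}{2}-C_*^\alpha\Big)a^{\alpha\beta}=\mathrm{I}_\beta+\mathrm{II}_\beta+\mathrm{III}_\beta,
\end{equation*}
with $\mathrm{I}_\beta=(\tilde b_1^\beta+\tilde b_2^\beta)-(\tilde b_1^{*\beta}+\tilde b_2^{*\beta})$, $\mathrm{II}_\beta=\sum_\alpha C_*^\alpha(a_*^{\alpha\beta}-a^{\alpha\beta})$, and $\mathrm{III}_\beta=-\sum_\alpha\tfrac{C_1^\alpha-C_2^\alpha}{2}M^{\alpha\beta}$, where $M^{\alpha\beta}:=\int_\Omega\mathbb{C}^0e(v_1^\alpha-v_2^\alpha)e(v^\beta)\,dx$. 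Lemma \ref{es b1 b1* beta=1} gives $|\mathrm{I}_\beta|\leq C\rho_d(\varepsilon)$, and Lemma \ref{lem valpha1} together with the universal bound on $C_*^\alpha$ gives $|\mathrm{II}_\beta|\leq C\rho_d(\varepsilon)$. The main issue is the cross term $\mathrm{III}_\beta$.

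Next I would use $(S_1)$ combined with $(S_2)$ or $(S_3)$ to classify each $\psi_\alpha$ by a parity $\eta_\alpha\in\{+,-\}$: under the reflection swapping $D_1$ and $D_2$ (with the appropriate sign dictated by the parity of $\varphi$), the invariance of the BVP forces $C_2^\alpha=C_1^\alpha$ when $\eta_\alpha=+$ and $C_2^\alpha=-C_1^\alpha$ when $\eta_\alpha=-$, while in the touching problem $C_*^\alpha=0$ whenever $\eta_\alpha=-$. Hence \eqref{conv C C*} is automatic for every $\alpha$ with $\eta_\alpha=-$. The same parity argument applied to the integrands shows that $a^{\alpha\beta}=0$ unless $\eta_\alpha=\eta_\beta$ and $M^{\alpha\beta}=0$ unless $\eta_\alpha\neq\eta_\beta$; consequently $\mathrm{III}_\beta=-\sum_{\eta_\alpha=-}C_1^\alpha M^{\alpha\beta}$ for each $\beta$ with $\eta_\beta=+$, and the equations for $\eta_\beta=-$ are automatically satisfied.

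To control the surviving $C_1^\alpha$ with $\eta_\alpha=-$, I would use the companion system obtained by \emph{subtracting} rather than adding the two identities in \eqref{C1C2_d}. The same parity reduction collapses it, for each $\beta$ with $\eta_\beta=-$, to
\begin{equation*}
\sum_{\eta_\alpha=-}C_1^\alpha N^{\alpha\beta}=\tfrac12(\tilde b_1^\beta-\tilde b_2^\beta)-\sum_{\eta_\alpha=+}C_1^\alpha M^{\beta\alpha},\qquad N^{\alpha\beta}:=\int_\Omega\mathbb{C}^0e(v_1^\alpha-v_2^\alpha)e(v_1^\beta-v_2^\beta)\,dx.
\end{equation*}
The right-hand side is $O(1)$ (the universal boundedness of $C_1^\alpha$ together with $|M^{\beta\alpha}|\leq C$, the latter following from Theorem \ref{auxi thm} and Theorems \ref{coro v1}, \ref{coro v1d+1}). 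The decisive estimate is a lower bound on the diagonal: because $\bar u+\underline u\equiv 1$ in $\Omega_{2R}$, the higher-order correction $f(\bar u)\delta'$ in the auxiliary function cancels in $u_1^\alpha-u_2^\alpha=(2\bar u-1)\psi_\alpha$ for translational $\psi_\alpha$, so $|\partial_{x_d}(v_1^\alpha-v_2^\alpha)|=\frac{2}{\delta(x')}+O(1)$ in $\Omega_R$ by Theorem \ref{coro v1}, whence
\begin{equation*}
N^{\alpha\alpha}\;\geq\; c\int_{|x'|<R}\frac{dx'}{\delta(x')}\;\geq\; c\,\rho_d(\varepsilon)^{-1}.
\end{equation*}
The matrix $(N^{\alpha\beta})_{\eta_\alpha=\eta_\beta=-}$ is thus positive definite of order $\rho_d(\varepsilon)^{-1}$, and inversion yields $|C_1^\alpha|\leq C\rho_d(\varepsilon)$ for every $\eta_\alpha=-$; substituting back gives $|\mathrm{III}_\beta|\leq C\rho_d(\varepsilon)$.

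Finally, restricting to $\eta_\beta=+$ indices, the reduced Gram matrix $(a^{\alpha\beta})_{\eta_\alpha=\eta_\beta=+}$ converges to the positive definite $(a_*^{\alpha\beta})_{\eta_\alpha=\eta_\beta=+}$ of the touching problem, hence is uniformly invertible in $\varepsilon$; solving gives $|\frac{C_1^\alpha+C_2^\alpha}{2}-C_*^\alpha|\leq C\rho_d(\varepsilon)$ for $\eta_\alpha=+$, which together with the trivial $\eta_\alpha=-$ case completes \eqref{conv C C*}. The chief obstacle is the diagonal lower bound $N^{\alpha\alpha}\gtrsim\rho_d(\varepsilon)^{-1}$: it relies on the sharp expansions of $\nabla v_i^\alpha$ from Theorems \ref{coro v1} and \ref{coro v1d+1}, not just the pointwise blow-up bounds of \cite{bll1,bll2}, combined with the algebraic cancellation $f(\bar u)=f(\underline u)$ coming from $\bar u+\underline u=1$, to pin down the exact leading behavior of $\nabla(v_1^\alpha-v_2^\alpha)$ in the narrow neck.
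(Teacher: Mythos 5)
Your proposal is correct in substance and reaches the same conclusion by a genuinely different organization. The paper proves the symmetry identities for the coefficients $a_{ij}^{\alpha\beta}$ entry by entry, assembles the explicit $d(d+1)\times d(d+1)$ matrix \eqref{matrix}, extracts $C_1^3=C_2^3$ by Cramer's rule, reduces \eqref{C1+C2_1} to the diagonal system \eqref{C1+C2_100}, and then disposes of the surviving cross term $-(C_1^1-C_2^1)(a_{11}^{13}+a_{12}^{13})$ by \emph{citing} the known bound $|C_1^\alpha-C_2^\alpha|\leq C\rho_d(\varepsilon)$ from \cite{bll1,bll2}. You instead classify the $\psi_\alpha$ by parity under the reflection exchanging $D_1$ and $D_2$ from the outset, which gives $C_1^\alpha+C_2^\alpha=0=C_*^\alpha$ for odd indices for free (cleaner than the paper's Cramer computation, and it also yields $C_1^\alpha=C_2^\alpha$ for even indices, which the paper only obtains implicitly), and you re-derive the smallness of the odd coefficients from the coercivity of the Gram matrix $N$ rather than importing it. What your route buys is self-containedness and transparency of which symmetries are doing what; what the paper's route buys is that the cross term is handled in one line by an external reference.

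Two points need attention. First, your inversion of $(N^{\alpha\beta})_{\eta_\alpha=\eta_\beta=-}$ only establishes the \emph{diagonal} lower bound $N^{\alpha\alpha}\gtrsim\rho_d(\varepsilon)^{-1}$; a diagonal bound alone does not control the smallest eigenvalue. In $d=2$ the odd block is $1\times 1$ so nothing is missing, but in $d=3$ the odd set is $\{1,2,3\}$ and you must also show $N^{\alpha\beta}=0$ (or at least $o(N^{\alpha\alpha})$) for $\alpha\neq\beta$; this does follow from the additional reflections in $x_1$ and $x_2$ contained in $({\rm S_1})$, by exactly the computations the paper performs for $a_{11}^{\alpha\beta}$, so the gap is fillable but should be stated. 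Second, the asserted exact cancellation $u_1^\alpha-u_2^\alpha=(2\bar u-1)\psi_\alpha$ depends on the sign convention for the corrector in $u_2^\alpha$: the corrector that produces the cancellation \eqref{cal auxi} for $\underline u=1-\bar u$ carries the opposite sign (since $\partial_{x_1x_d}\underline u=-\partial_{x_1x_d}\bar u$ while $(\partial_{x_d}\underline u)^2=(\partial_{x_d}\bar u)^2$), so the correctors add rather than cancel. This is harmless for your purpose — the residual term contributes only $O(|x'|/\delta)$ to off-diagonal strain components and the bound $e_{\alpha d}(v_1^\alpha-v_2^\alpha)\geq \delta(x')^{-1}-C$ survives, hence so does $N^{\alpha\alpha}\gtrsim\int_{|x'|<R}\delta(x')^{-1}dx'\gtrsim\rho_d(\varepsilon)^{-1}$ — but the claim as literally written is not what the construction gives. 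The stray factor $\tfrac12$ in your companion system is immaterial.
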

	
\begin{proof}

(1) We first prove the case when $d=2$. 
First, on one hand, using the symmetry of the domain with respect to the origin and the boundary conditions of $v_{i}^{\alpha}$, we have, for $\alpha=1,2$,
\begin{align*}
v_{2}^{\alpha}(x_{1},x_{2})\big|_{\partial D_{1}}=v_{1}^{\alpha}(-x_{1},-x_{2})\big|_{\partial D_{2}}&=0,~~v_{2}^{\alpha}(x_{1},x_{2})\big|_{\partial D_{2}}=v_{1}^{\alpha}(-x_{1},-x_{2})\big|_{\partial D_{1}}=\psi_{\alpha},\\
v_{2}^{\alpha}(x_{1},x_{2})\big|_{\partial D}&=v_{1}^{\alpha}(-x_{1},-x_{2})\big|_{\partial D}=0.
\end{align*}
The using the equation $\mathcal{L}_{\lambda, \mu}v_{i}^{\alpha}=0$, one can see that $$v_{2}^{\alpha}(x_{1},x_{2})=v_{1}^{\alpha}(-x_{1},-x_{2}),\quad\mbox{in}~\Omega,\quad~\alpha=1,2.$$ 
Therefore,
\begin{equation}\label{origin_sym0}
a_{11}^{\alpha\beta}=a_{22}^{\alpha\beta},\quad a_{12}^{\alpha\beta}=a_{21}^{\alpha\beta},\quad~\alpha,\beta=1,2.
\end{equation}
Similarly, by taking 
\begin{align*} 
v_{2}^{3}(x_{1},x_{2})\big|_{\partial D_{1}}=-v_{1}^{3}(-x_{1},-x_{2})\big|_{\partial D_{2}}&=0,~~v_{2}^{3}(x_{1},x_{2})\big|_{\partial D_{2}}=-v_{1}^{3}(-x_{1},-x_{2})\big|_{\partial D_{1}}=\psi_{3},\\
v_{2}^{3}(x_{1},x_{2})\big|_{\partial D}&=-v_{1}^{3}(-x_{1},-x_{2})\big|_{\partial D}=0,
\end{align*}
we have $v_{2}^{3}(x_{1},x_{2})=-v_{1}^{3}(-x_{1},-x_{2})$ are the solutions of \eqref{equ_v1}. Thus, 
\begin{align}\label{origin_sym}
a_{11}^{\alpha 3}=-a_{22}^{\alpha 3},\quad a_{11}^{33}=a_{22}^{33},\quad a_{12}^{\alpha 3}=-a_{21}^{\alpha 3},\quad a_{12}^{3\alpha}=-a_{21}^{3\alpha},\quad\quad~~ \alpha=1,2.
\end{align}

On the other hand, by using the symmetry of the domain with respect to $\{x_{2}=0\}$ and the boundary conditions of $v_{i}^{\alpha}$, we find that 
\begin{align*}
v_{2}^{2}(x_{1},x_{2})=\begin{pmatrix}
(v_{2}^{2})^1(x_{1},x_{2})\\\\
(v_{2}^{2})^2(x_{1},x_{2})
\end{pmatrix}
=\begin{pmatrix}
-(v_{1}^{2})^1(x_{1},-x_{2})\\\\
(v_{1}^{2})^2(x_{1},-x_{2})
\end{pmatrix}
\end{align*}
and
\begin{align*}
v_{2}^{3}(x_{1},x_{2})=\begin{pmatrix}
(v_{2}^{3})^1(x_{1},x_{2})\\\\
(v_{2}^{3})^2(x_{1},x_{2})
\end{pmatrix}
=\begin{pmatrix}
-(v_{1}^{3})^1(x_{1},-x_{2})\\\\
(v_{1}^{3})^2(x_{1},-x_{2})
\end{pmatrix}
\end{align*}
are also the solutions of \eqref{equ_v1}, respectively. Then we have
\begin{align}\label{sym v2}
\left(\mathbb{C}^{(0)}
e(v_{2}^{2}),e(v_{2}^{3})\right)&=\lambda\left(-\partial_{x_{1}}(v_{1}^{2})^1-\partial_{x_{2}}(v_{1}^{2})^2\right)\left(-\partial_{x_{1}}(v_{1}^{3})^1-\partial_{x_{2}}(v_{1}^{3})^2\right)\nonumber\\
&\quad+\mu\Big(2\partial_{x_{1}}(v_{1}^{2})^1\cdot\partial_{x_{1}}(v_{1}^{3})^1+2\partial_{x_{2}}(v_{1}^{2})^2\cdot\partial_{x_{2}}(v_{1}^{3})^2\nonumber\\
&\quad+(\partial_{x_{2}}(v_{1}^{2})^1+\partial_{x_{1}}(v_{1}^{2})^2)(\partial_{x_{2}}(v_{1}^{3})^1+\partial_{x_{1}}(v_{1}^{3})^2)\Big)\nonumber\\
&=\left(\mathbb{C}^{(0)}
e(v_{1}^{2}),e(v_{1}^{3})\right).
\end{align}
Thus, 
\begin{equation}\label{23_sym}
a_{22}^{23}=\int_{\Omega}\left(\mathbb{C}^{(0)}
e(v_{2}^{2}),e(v_{2}^{3})\right)dx=\int_{\Omega}\left(\mathbb{C}^{(0)}
e(v_{1}^{2}),e(v_{1}^{3})\right)dx=a_{11}^{23}.
\end{equation}

Similarly, 
\begin{align*}
v_{2}^{1}(x_{1},x_{2})=\begin{pmatrix}
(v_{2}^{1})^1(x_{1},x_{2})\\\\
(v_{2}^{1})^2(x_{1},x_{2})
\end{pmatrix}
=\begin{pmatrix}
(v_{1}^{1})^1(x_{1},-x_{2})\\\\
-(v_{1}^{1})^2(x_{1},-x_{2})
\end{pmatrix}
\end{align*}
admits \eqref{equ_v1}. Then
\begin{align*}
\left(\mathbb{C}^{(0)}
e(v_{2}^{1}),e(v_{2}^{3})\right)&=\lambda\left(\partial_{x_{1}}(v_{1}^{1})^1+\partial_{x_{2}}(v_{1}^{1})^2\right)\left(-\partial_{x_{1}}(v_{1}^{3})^1-\partial_{x_{2}}(v_{1}^{3})^2\right)\\
&\quad+\mu\Big(-2\partial_{x_{1}}(v_{1}^{1})^1\cdot\partial_{x_{1}}(v_{1}^{3})^1-2\partial_{x_{2}}(v_{1}^{1})^2\cdot\partial_{x_{2}}(v_{1}^{3})^2\\
&\quad+(-\partial_{x_{2}}(v_{1}^{1})^1-\partial_{x_{1}}(v_{1}^{1})^2)(\partial_{x_{2}}(v_{1}^{3})^1+\partial_{x_{1}}(v_{1}^{3})^2)\Big)\\
&=-\left(\mathbb{C}^{(0)}
e(v_{1}^{1}),e(v_{1}^{3})\right).
\end{align*}
Hence,
\begin{equation}\label{13_sym}
a_{22}^{13}=-a_{11}^{13}.
\end{equation}
Similarly, we have
\begin{equation}\label{alphabeta_sym}
\begin{split}
&a_{22}^{12}=-a_{11}^{12},\quad a_{12}^{12}=-a_{21}^{12},\quad a_{12}^{21}=-a_{21}^{21},\\
&a_{12}^{13}=-a_{21}^{13},\quad a_{12}^{31}=-a_{21}^{31},\quad a_{12}^{23}=a_{21}^{23},\quad a_{12}^{32}=a_{21}^{32}.
\end{split}
\end{equation}
Combining \eqref{origin_sym0}--\eqref{alphabeta_sym}, we obtain
\begin{equation*}
a_{11}^{12}=a_{22}^{12}=a_{12}^{12}=a_{21}^{12}=a_{12}^{21}=a_{21}^{21}=a_{11}^{23}=a_{22}^{23}=a_{12}^{23}=a_{21}^{23}=a_{12}^{32}=a_{21}^{32}=0.
\end{equation*}
Therefore, we obtain
\begin{align}\label{matrix}
A:=\begin{pmatrix}
~a_{11}^{\alpha\beta}&a_{21}^{\alpha\beta}~\\\\
~a_{21}^{\alpha\beta}&a_{22}^{\alpha\beta}~
\end{pmatrix}_{\alpha,\beta=1,2,3}=\begin{pmatrix}
~a_{11}^{11}&0&a_{11}^{13}&a_{12}^{11}&0&a_{12}^{13}~\\\\
~0&a_{11}^{22}&0&0&a_{12}^{22}&0~\\\\
~a_{11}^{13}&0&a_{11}^{33}&-a_{12}^{13}&0&a_{12}^{33}~\\\\
~a_{12}^{11}&0&-a_{12}^{13}&a_{11}^{11}&0&-a_{11}^{13}~\\\\
~0&a_{12}^{22}&0&0&a_{11}^{22}&0~\\\\
~a_{12}^{13}&0&a_{12}^{33}&-a_{11}^{13}&0&a_{11}^{33}
\end{pmatrix}.
\end{align}
By $({\rm S_{2}})$, a direct calculation which is similar to \eqref{sym v2} yields
\begin{equation}\label{|symtilb}
\tilde{b}_{1}^{1}=-\tilde{b}_{2}^{1},\quad \tilde{b}_{1}^{2}=\tilde{b}_{2}^{2},\quad \tilde{b}_{1}^{3}=\tilde{b}_{2}^{3}.
\end{equation}

By \eqref{C1C2_d}, we have
$$AX=b,$$
where 
$$X=(C_{1}^1,C_{1}^{2},C_{1}^3,C_{2}^{1},C_{2}^2,C_{2}^{3})^{\mathrm{T}},\quad\mbox{ and }~~b=(\tilde{b}_{1}^{1},\tilde{b}_{1}^{2},\tilde{b}_{1}^{3},\tilde{b}_{2}^{1},\tilde{b}_{2}^{2},\tilde{b}_{2}^{3})^{\mathrm{T}}.$$ By Cramer's rule and a direct calculation, we get
\begin{align*}
C_{1}^{3}&=\frac{(a_{11}^{22})^{2}-(a_{12}^{22})^{2}}{\det{A}}\begin{vmatrix}
~a_{11}^{11}&\tilde{b}_{1}^{1}&a_{12}^{11}&a_{12}^{13}~\\\\
~a_{11}^{13}&\tilde{b}_{1}^{3}&-a_{12}^{13}&a_{12}^{33}~\\\\
~a_{12}^{11}&-\tilde{b}_{1}^{1}&a_{11}^{11}&-a_{11}^{13}~\\\\
~a_{12}^{13}&\tilde{b}_{1}^{3}&-a_{11}^{13}&a_{11}^{33}
\end{vmatrix}\\
&=\frac{(a_{11}^{22})^{2}-(a_{12}^{22})^{2}}{\det{A}}\left(\begin{vmatrix}
~a_{11}^{11}-a_{12}^{11}&\tilde{b}_{1}^{1}~\\\\
~a_{11}^{13}+a_{12}^{13}&\tilde{b}_{1}^{3}
\end{vmatrix}\cdot\begin{vmatrix}
~a_{11}^{11}+a_{12}^{11}&a_{12}^{13}-a_{11}^{13}~\\\\
~a_{12}^{13}-a_{11}^{13}&a_{11}^{33}-a_{12}^{33}
\end{vmatrix}\right),
\end{align*}
and similarly,
\begin{align*}
C_{2}^{3}&=\frac{(a_{11}^{22})^{2}-(a_{12}^{22})^{2}}{\det{A}}\left(\begin{vmatrix}
~a_{11}^{11}-a_{12}^{11}&-\tilde{b}_{1}^{1}~\\\\
~-a_{11}^{13}-a_{12}^{13}&\tilde{b}_{1}^{3}
\end{vmatrix}\cdot\begin{vmatrix}
~a_{11}^{11}+a_{12}^{11}&a_{11}^{13}-a_{12}^{13}~\\\\
~a_{11}^{13}-a_{12}^{13}&a_{11}^{33}-a_{12}^{33}
\end{vmatrix}\right).
\end{align*}
Hence, 
\begin{align}\label{C1 23}
C_{1}^{3}=C_{2}^{3}.
\end{align}
Thus, by using \eqref{matrix}--\eqref{C1 23}, \eqref{C1+C2_1} becomes 
\begin{align}\label{C1+C2_100}
\begin{pmatrix}
~a^{11}&0&0~\\\\
~0&a^{22}&0~\\\\
~0&0&a^{33}
\end{pmatrix}\begin{pmatrix}
~\frac{C_{1}^{1}+C_{2}^{1}}{2}~\\\\
\frac{C_{1}^{2}+C_{2}^{2}}{2}\\\\
\frac{C_{1}^{3}+C_{2}^{3}}{2}
\end{pmatrix}
=\begin{pmatrix}
~0~\\\\
2\tilde{b}_{1}^{2}\\\\
2\tilde{b}_{1}^{3}-(C_{1}^{1}-C_{2}^{1})(a_{11}^{13}+a_{12}^{13})
\end{pmatrix}.
\end{align}

Similar to \eqref{matrix} and \eqref{|symtilb}, by replicating the computation used in \eqref{sym v2}, we get 
$$a_{*}^{12}=a_{*}^{21}=a_{*}^{13}=a_{*}^{31}=a_{*}^{23}=a_{*}^{32}=0$$
and
$$\tilde{b}_{1}^{*1}=-\tilde{b}_{2}^{*1},\quad \tilde{b}_{1}^{*2}=\tilde{b}_{2}^{*2},\quad\tilde{b}_{1}^{*3}=\tilde{b}_{2}^{*3}.$$
Then \eqref{equ_C*alpha} becomes
\begin{align}\label{C1+C2_*}
\begin{pmatrix}
~a_{*}^{11}&0&0~\\\\
~0&a_{*}^{22}&0~\\\\
~0&0&a_{*}^{33}
\end{pmatrix}\begin{pmatrix}
~C_{*}^{1}~\\\\
C_{*}^{2}\\\\
C_{*}^{3}
\end{pmatrix}
=\begin{pmatrix}
~0~\\\\
2\tilde{b}_{1}^{*2}\\\\
2\tilde{b}_{1}^{*3}
\end{pmatrix}.
\end{align}

Finally, we turn to the estimate of $\frac{C_{1}^{\alpha}+C_{2}^{\alpha}}{2}-C_{*}^{\alpha}$, $\alpha=1,2,3$.  
Combining \eqref{C1+C2_100} and \eqref{C1+C2_*}, we have
\begin{align}\label{sym d=2}
\begin{pmatrix}
~a^{11}&0&0~\\\\
~0&a^{22}&0~\\\\
~0&0&a^{33}
\end{pmatrix}\begin{pmatrix}
~\frac{C_{1}^{1}+C_{2}^{1}}{2}-C_{*}^{1}~\\\\
\frac{C_{1}^{2}+C_{2}^{2}}{2}-C_{*}^{2}\\\\
\frac{C_{1}^{3}+C_{2}^{3}}{2}-C_{*}^{3}
\end{pmatrix}
=\begin{pmatrix}
~\mathcal{B}^{1}~\\\\
\mathcal{B}^{2}\\\\
\mathcal{B}^{3}
\end{pmatrix},
\end{align}
where 
\begin{align*}
\mathcal{B}^{1}&=-C_{*}^{1}(a^{11}-a_{*}^{11})=O(\sqrt{\varepsilon}),\\
\mathcal{B}^{2}&=2(\tilde{b}_{1}^{2}-\tilde{b}_{1}^{*2})-C_{*}^{2}(a^{2 2}-a_{*}^{22})=O(\sqrt{\varepsilon}),\\
\mathcal{B}^{3}&=2(\tilde{b}_{1}^{3}-\tilde{b}_{1}^{*3})-C_{*}^{3}(a^{3 3}-a_{*}^{33})-(C_{1}^{1}-C_{2}^{1})(a_{11}^{13}+a_{12}^{13})=O(\sqrt{\varepsilon}),
\end{align*}
here we used Lemmas \ref{es b1 b1* beta=1} and \ref{lem valpha1}. By Cramer's rule, we obtain 
\begin{align*}
\frac{C_{1}^{\alpha}+C_{2}^{\alpha}}{2}-C_{*}^{\alpha}=O(\sqrt{\varepsilon}),\quad\alpha=1,2,3.
\end{align*}

(2) For $d=3$, we can similarly prove by using the symmetry property of domain that 
$$C_{1}^{\alpha}=C_{2}^{\alpha},\quad \alpha=4,5,6.$$
We omit the details here.  Similar to \eqref{sym d=2}, we have
\begin{align*}
\begin{pmatrix}
~a^{11}&0&0&0~\\\\
~\vdots&\vdots&\vdots&\vdots~\\\\
~0&0&0&a^{66}
\end{pmatrix}\begin{pmatrix}
~\frac{C_{1}^{1}+C_{2}^{1}}{2}-C_{*}^{1}~\\\\
\vdots\\\\
\frac{C_{1}^{6}+C_{2}^{6}}{2}-C_{*}^{6}
\end{pmatrix}
=\begin{pmatrix}
~\mathcal{B}^{1}~\\\\
\vdots\\\\
\mathcal{B}^{6}
\end{pmatrix},
\end{align*}
where $\mathcal{B}^{\beta}=O(|\log\varepsilon|^{-1})$ as $\varepsilon\rightarrow0$, $\beta=1,\cdots,6$. Hence, by Cramer's rule, we get \eqref{conv C C*} for $d=3$. Proposition \ref{conv C alpha} is thus proved.
\end{proof}

\subsection{Proof of Proposition \ref{prop converge b}}\label{subsec b}
In this section, we aim to prove Proposition \ref{prop converge b}.  
Recalling \eqref{def u_b}, \eqref{maineqn touch}, and the definitions of $b_{1}^{\beta}[\varphi]$ and $b_{1}^{*\beta}[\varphi]$, \eqref{def_bj},  we have
\begin{align}\label{difference bi beta}
&b_{1}^{\beta}[\varphi]-b_{1}^{*\beta}[\varphi]=\int_{\partial{D}_{1}}\frac{\partial{u}_{b}}{\partial\nu}\Big|_{+}\cdot\psi_{\beta}-\int_{\partial{D}_{1}^{*}}\frac{\partial u^{*}}{\partial \nu}\Big|_{+}\cdot\psi_{\beta}\nonumber\\
&=\int_{\partial D_{1}}\frac{\partial v_{0}}{\partial \nu}\Big|_{+}\cdot\psi_{\beta}-\int_{\partial D_{1}^{*}}\frac{\partial v_{0}^{*}}{\partial \nu}\Big|_{+}\cdot\psi_{\beta}+\sum_{\alpha=1}^{d(d+1)/2}C_{2}^{\alpha}\int_{\partial D_{1}}\frac{\partial v^{\alpha}}{\partial \nu}\Big|_{+}\cdot\psi_{\beta}\nonumber\\
&\quad-\sum_{\alpha=1}^{d(d+1)/2}C_{*}^{\alpha}\int_{\partial D_{1}^{*}}\frac{\partial v^{*\alpha}}{\partial \nu}\Big|_{+}\cdot\psi_{\beta}\nonumber\\
&=\int_{\partial D_{1}}\frac{\partial v_{0}}{\partial \nu}\Big|_{+}\cdot\psi_{\beta}-\int_{\partial D_{1}^{*}}\frac{\partial v_{0}^{*}}{\partial \nu}\Big|_{+}\cdot\psi_{\beta}+\sum_{\alpha=1}^{d(d+1)/2}C_{2}^{\alpha}\Bigg(\int_{\partial D_{1}}\frac{\partial v^{\alpha}}{\partial \nu}\Big|_{+}\cdot\psi_{\beta}\nonumber\\
&\quad-\int_{\partial D_{1}^{*}}\frac{\partial v^{*\alpha}}{\partial \nu}\Big|_{+}\cdot\psi_{\beta}\Bigg)+\sum_{\alpha=1}^{d(d+1)/2}\Big(C_{2}^{\alpha}-C_{*}^{\alpha}\Big)\int_{\partial D_{1}^{*}}\frac{\partial v^{*\alpha}}{\partial \nu}\Big|_{+}\cdot\psi_{\beta},
\end{align}
where $v_{0}$, $v_{0}^{*}$, $v^{\alpha}$, and $v^{*\alpha}$ are, respectively, defined by \eqref{equ_v3}, \eqref{equ_v3*}, \eqref{def valpha}, and \eqref{def valpha*}. From the proof of Proposition \ref{conv C alpha}, $C_{1}^{\alpha}=C_{2}^{\alpha}$ for $\alpha=d+1,\cdots,d(d+1)/2$, and the estimates of $C_{1}^{\alpha}-C_{2}^{\alpha}$ (see for instance, \cite[Proposition 4.2]{bll1} and\cite[Proposition 4.1]{bll2}), we have
\begin{equation}\label{C* Ci}
|C_{*}^{\alpha}-C_{2}^{\alpha}|\leq C\rho_{d}(\varepsilon),\quad \alpha=1,\cdots,d(d+1)/2.
\end{equation}
By using \eqref{difference bi beta}, \eqref{C* Ci}, Lemmas \ref{es b1 b1* beta=1} and \ref{lem valpha1}, we have
\begin{align}\label{estimate b11 b11*}
|b_{1}^\beta[\varphi]-b_{1}^{*\beta}[\varphi]|\leq C\rho_{d}(\varepsilon),
\end{align}
here we used the fact that $|\nabla v^{*\alpha}|\leq C$ in $\Omega^{*}$, see Theorem \ref{auxi thm}. Therefore, Proposition \ref{prop converge b} is proved.

\subsection{Proof of Proposition \ref{prop a11} \Big(The symptotics of $a_{11}^{\alpha\alpha}$\Big)}\label{sec pf propa11}
We will use Theorems \ref{coro v1} and \ref{coro v1d+1}, and Lemma \ref{lem difference v11} to prove Proposition \ref{prop a11}.  Let us first prove the case in dimention two.
\begin{proof}[\bf Proof of Proposition \ref{prop a11} for $d=2$.]
	(1) First consider $a_{11}^{11}$. We  divide it into three parts:
	\begin{align}\label{representation a11}
	a_{11}^{11}&=\int_{\Omega}\big(\mathbb{C}^0e(v_{1}^{1}),e(v_{1}^{1})\big)\ dx\nonumber\\
	&=\int_{\Omega\setminus\Omega_{R}}\big(\mathbb{C}^0e(v_{1}^{1}),e(v_{1}^{1})\big)\ dx+\int_{\Omega_{R}\setminus\Omega_{\varepsilon^{1/8}}}\big(\mathbb{C}^0e(v_{1}^{1}),e(v_{1}^{1})\big)\ dx\nonumber\\
	&\quad+\int_{\Omega_{\varepsilon^{1/8}}}\big(\mathbb{C}^0e(v_{1}^{1}),e(v_{1}^{1})\big)\ dx=:\mbox{I}_{1}+\mbox{I}_{2}+\mbox{I}_{3}.
	\end{align}
In the follow we estimate $\mbox{I}_{1}$, $\mbox{I}_{2}$, and $\mbox{I}_{3}$ one by one.
	
	{\bf Step 1. Claim: }There exists a constant $M_{1}^{*}$, independent of $\varepsilon$, such that
		\begin{equation}\label{est I1}
		\mbox{I}_{1}=M_{1}^{*}+O(\varepsilon^{1/4}).
		\end{equation}

Notice that
	$$\mathcal{L}_{\lambda,\mu}(v_{1}^{1}-v_{1}^{*1})=0,\quad x\in D\setminus D_{1}\cup D_{2}\cup D_{1}^{*}\cup D_{2}^{*}\cup\Omega_{R},$$
	and
	$$0\leq|v_{1}^{1}|, |v_{1}^{*1}|\leq1,\quad x\in D\setminus D_{1}\cup D_{2}\cup D_{1}^{*}\cup D_{2}^{*}\cup\Omega_{R}.$$
	Then since $\partial D_{1}, \partial D_{1}^{*}, \partial D_{2}$, and $\partial D$ are $C^{2,\alpha}$, we have
	\begin{equation}\label{DD v1 1}
	|\nabla^{2}(v_{1}^{1}-v_{1}^{*1})|\leq|\nabla^{2}v_{1}^{1}|+|\nabla^{2}v_{1}^{*1}|\leq C\quad\mbox{in}~D\setminus(D_{1}\cup D_{1}^{*}\cup D_{2}\cup D_{2}^{*}\cup\Omega_{R}).
	\end{equation}
	Moreover, we obtain from \eqref{difference v11} that
	\begin{equation}\label{es v11 v1*1}
	\|v_{1}^{1}-v_{1}^{*1}\|_{L^{\infty}(D\setminus(D_{1}\cup D_{1}^{*}\cup D_{2}\cup D_{2}^{*}\cup\Omega_{\varepsilon^{1/4}}))}\leq C\varepsilon^{1/2}.
	\end{equation}
By using the interpolation inequality, \eqref{DD v1 1}, and \eqref{es v11 v1*1}, we obtain
	\begin{equation}\label{D v1 *1}
	|\nabla(v_{1}^{1}-v_{1}^{*1})|\leq C\varepsilon^{1/2(1-\frac{1}{2})}=C\varepsilon^{1/4}\quad\mbox{in}~D\setminus(D_{1}\cup D_{1}^{*}\cup D_{2}\cup D_{2}^{*}\cup\Omega_{R}).
	\end{equation}
	
	Denote
	$$M_{1}^{*}:=\int_{\Omega^{*}\setminus\Omega_{R}^{*}}\big(\mathbb{C}^0e(v_{1}^{*1}),e(v_{1}^{*1})\big)\ dx.$$
	Then
		\begin{align*}
		\mbox{I}_{1}-M_{1}^{*}
		&=\int_{\Omega\setminus(D_{1}^{*}\cup\Omega_{R})}\Big(\big(\mathbb{C}^0e(v_{1}^{1}),e(v_{1}^{1})\big)-\big(\mathbb{C}^0e(v_{1}^{*1}),e(v_{1}^{*1})\big)\Big)\ dx\\
		&\quad+\int_{D_{1}^{*}\setminus(D_{1}\cup\Omega_{R})}\big(\mathbb{C}^0e(v_{1}^{1}),e(v_{1}^{1})\big)\ dx-\int_{D_{1}\setminus D_{1}^{*}}\big(\mathbb{C}^0e(v_{1}^{*1}),e(v_{1}^{*1})\big)\ dx.
		\end{align*}
It follows from $|D_{1}^{*}\setminus(D_{1}\cup\Omega_{R})|\leq C\varepsilon$, $|D_{1}\setminus D_{1}^{*}|\leq C\varepsilon$, and the boundedness of $|\nabla v_{1}^{1}|$ and $|\nabla v_{1}^{*1}|$ in $D_{1}^{*}\setminus(D_{1}\cup\Omega_{R})$ and  $D_{1}\setminus D_{1}^{*}$, respectively, that
\begin{align}\label{est bdd out}
\left|\int_{D_{1}^{*}\setminus(D_{1}\cup\Omega_{R})}\big(\mathbb{C}^0e(v_{1}^{1}),e(v_{1}^{1})\big)\ dx-\int_{D_{1}\setminus D_{1}^{*}}\big(\mathbb{C}^0e(v_{1}^{*1}),e(v_{1}^{*1})\big)\ dx\right|\leq C\varepsilon.
\end{align}
So by using \eqref{D v1 *1} and \eqref{est bdd out}, we have
	\begin{align*}
	\mbox{I}_{1}-M_{1}^{*}
	&=\int_{\Omega\setminus(D_{1}^{*}\cup\Omega_{R})}\big(\mathbb{C}^0e(v_{1}^{1}-v_{1}^{*1}),e(v_{1}^{1}-v_{1}^{*1})\big)\ dx\\
	&\quad+2\int_{\Omega\setminus(D_{1}^{*}\cup\Omega_{R})}\big(\mathbb{C}^0e(v_{1}^{*1}),e(v_{1}^{1}-v_{1}^{*1})\big)\ dx+O(\varepsilon)\\
	&=O(\varepsilon^{1/4}).
	\end{align*}
We henceforth get \eqref{est I1}.
	
{\bf Step 2.} Proof of 
\begin{equation}\label{est I2}
\mbox{I}_{2}=\mbox{I}_{2}^{*}+O(\varepsilon^{1/8}),
\end{equation}
where
$$\mbox{I}_{2}^{*}=\int_{\Omega_{R}^{*}\setminus\Omega_{\varepsilon^{1/8}}^{*}}\big(\mathbb{C}^0e(v_{1}^{*1}),e(v_{1}^{*1})\big)\ dx.$$

We further divide $\mbox{I}_{2}-\mbox{I}_{2}^{*}$ into three terms:	
	\begin{align}\label{divide I2}
	&\mbox{I}_{2}-\mbox{I}_{2}^{*}\nonumber\\
	&=\int_{(\Omega_{R}\setminus\Omega_{\varepsilon^{1/8}})\setminus(\Omega_{R}^{*}\setminus\Omega_{\varepsilon^{1/8}}^{*})}\big(\mathbb{C}^0e(v_{1}^{1}),e(v_{1}^{1})\big)\ dx+2\int_{\Omega_{R}^{*}\setminus\Omega_{\varepsilon^{1/8}}^{*}}\big(\mathbb{C}^0e(v_{1}^{*1}),e(v_{1}^{1}-v_{1}^{*1})\big)\ dx\nonumber\\
	&\quad+\int_{\Omega_{R}^{*}\setminus\Omega_{\varepsilon^{1/8}}^{*}}\big(\mathbb{C}^0e(v_{1}^{1}-v_{1}^{*1}),e(v_{1}^{1}-v_{1}^{*1})\big)\ dx\nonumber\\
	&=:\mbox{I}_{2,1}+\mbox{I}_{2,2}+\mbox{I}_{2,3}.
	\end{align}
	
	For $\varepsilon^{1/8}\leq|z_{1}|\leq R$, we rescale $\Omega_{|z_{1}|+|z_{1}|^{2}}\setminus\Omega_{|z_{1}|}$ into a nearly cube $Q_{1}$ in unit size, and $\Omega_{|z_{1}|+|z_{1}|^{2}}^{*}\setminus\Omega_{|z_{1}|}^{*}$ into $Q_{1}^{*}$ by using the following change of variables:
	\begin{align*}
	\begin{cases}
	x_{1}-z_{1}=|z_{1}|^{2}y_{1},\\
	x_{2}=|z_{1}|^{2}y_{2}.
	\end{cases}
	\end{align*}
	After rescaling, let
	$$V_{1}^{1}=v_{1}^{1}(z_{1}+z_{1}^{2}y_{1},|z_{1}|^{2}y_{2}),\quad\mbox{in}~Q_{1},\quad\mbox{and}\quad V_{1}^{*1}=v_{1}^{*1}(z_{1}+z_{1}^{2}y_{1},|z_{1}|^{2}y_{2}),\quad\mbox{in}~Q_{1}^{*}.$$
	By the same reason that leads to \eqref{DD v1 1}, we get
	$$|\nabla^{2}V_{1}^{1}|\leq C\quad\mbox{in}~Q_{1},\quad\mbox{and}~|\nabla^{2}V_{1}^{*1}|\leq C\quad\mbox{in}~Q_{1}^{*}.$$
	Using the interpolation inequality, we obtain
	$$|\nabla(V_{1}^{1}-V_{1}^{*1})|\leq C\varepsilon^{1/4}.$$
	
	Rescaling back to $v_{1}^{1}-v_{1}^{*1}$, we get
	\begin{equation}\label{difference Dv1 1}
	|\nabla(v_{1}^{1}-v_{1}^{*1})|\leq C\varepsilon^{1/4}|x_{1}|^{-2}\quad\mbox{in}~\Omega_{R}^{*}\setminus\Omega_{\varepsilon^{1/8}}^{*}.
	\end{equation}
	Similarly, we have
	\begin{equation}\label{Dv11}
	|\nabla v_{1}^{1}|\leq C|x_{1}|^{-2}\quad\mbox{in}~\Omega_{R}\setminus\Omega_{\varepsilon^{1/8}},
	\end{equation}
	and
	\begin{equation}\label{Dv11*}
	|\nabla v_{1}^{*1}|\leq C|x_{1}|^{-2}\quad\mbox{in}~\Omega_{R}^{*}\setminus\Omega_{\varepsilon^{1/8}}^{*}.
	\end{equation}
	Now, by \eqref{Dv11} and $|(\Omega_{R}\setminus\Omega_{\varepsilon^{1/8}})\setminus(\Omega_{R}^{*}\setminus\Omega_{\varepsilon^{1/8}}^{*})|\leq C\varepsilon$, we have
	\begin{align*}
	|\mbox{I}_{2,1}|\leq C\varepsilon\int_{\varepsilon^{1/8}<|x_{1}|\leq R}\frac{dx_{1}}{|x_{1}|^{4}}\leq C\varepsilon^{5/8}.
	\end{align*}
	Also, by using \eqref{difference Dv1 1} and \eqref{Dv11*}, we obtain
		\begin{align*}
	|\mbox{I}_{2,2}|\leq C\varepsilon^{1/4}\int_{\varepsilon^{1/8}<|x_{1}|\leq R}\frac{dx_{1}}{|x_{1}|^{2}}\leq C\varepsilon^{1/8},
		\end{align*}
	and by \eqref{difference Dv1 1}, we get
	\begin{align*}
	|\mbox{I}_{2,3}|\leq C\varepsilon^{1/2}\int_{\varepsilon^{1/8}<|x_{1}|\leq R}\frac{dx_{1}}{|x_{1}|^{2}}\leq C\varepsilon^{3/8}.
	\end{align*}
	Substituting the estimates above into \eqref{divide I2}, we obtain \eqref{est I2}.
	
	{\bf Step 3.} We next further approximate $\mbox{I}_{2}^{*}$ by some specific functions. Note that
	\begin{align*}
	\mbox{I}_{2}^{*}&=\int_{\Omega_{R}^{*}\setminus\Omega_{\varepsilon^{1/8}}^{*}}\big(\mathbb{C}^0e(u_{1}^{*1}),e(u_{1}^{*1})\big)\ dx+2\int_{\Omega_{R}^{*}\setminus\Omega_{\varepsilon^{1/8}}^{*}}\big(\mathbb{C}^0e(u_{1}^{*1}),e(v_{1}^{*1}-u_{1}^{*1})\big)\ dx\nonumber\\ &\quad+\int_{\Omega_{R}^{*}\setminus\Omega_{\varepsilon^{1/8}}^{*}}\big(\mathbb{C}^0e(v_{1}^{*1}-u_{1}^{*1}),e(v_{1}^{*1}-u_{1}^{*1})\big)\ dx.
	\end{align*}
	By using Theorem \ref{coro v1}, for the second term, we have
	\begin{align*}
	2\left|\int_{\Omega_{\varepsilon^{1/8}}^{*}}\big(\mathbb{C}^0e(u_{1}^{*1}),e(v_{1}^{*1}-u_{1}^{*1})\big)\ dx\right|\leq C\varepsilon^{1/8},
	\end{align*}
	and for the third term,
	\begin{align*}
	\left|\int_{\Omega_{\varepsilon^{1/8}}^{*}}\big(\mathbb{C}^0e(v_{1}^{*1}-u_{1}^{*1}),e(v_{1}^{*1}-u_{1}^{*1})\big)\ dx\right|\leq C\varepsilon^{3/8}.
	\end{align*}	
	Hence,
	\begin{align}\label{est I2*}
	\mbox{I}_{2}^{*}=\int_{\Omega_{R}^{*}\setminus\Omega_{\varepsilon^{1/8}}^{*}}\big(\mathbb{C}^0e(u_{1}^{*1}),e(u_{1}^{*1})\big)\ dx+M_{2}^{*}+O(\varepsilon^{1/8}),
	\end{align}
	where 
	$$M_{2}^{*}:=2\int_{\Omega_{R}^{*}}\big(\mathbb{C}^0e(u_{1}^{*1}),e(v_{1}^{*1}-u_{1}^{*1})\big)\ dx+\int_{\Omega_{R}^{*}}\big(\mathbb{C}^0e(v_{1}^{*1}-u_{1}^{*1}),e(v_{1}^{*1}-u_{1}^{*1})\big)\ dx$$
	is a constant independent of $\varepsilon$. Coming back to \eqref{representation a11}, and using \eqref{est I1}, \eqref{est I2}, and  \eqref{est I2*}, so far we obtain
	\begin{align}\label{equality a11}
	a_{11}^{11}&=\mbox{I}_{3}+\int_{\Omega_{R}^{*}\setminus\Omega_{\varepsilon^{1/8}}^{*}}\big(\mathbb{C}^0e(u_{1}^{*1}),e(u_{1}^{*1})\big)\ dx+M_{1}^{*}+M_{2}^{*}+O(\varepsilon^{1/8}).
	\end{align}

{\bf Step 4.} Now we are in a position to complete the rest of the proof by direct computations.
First, similar to \eqref{est I2*}, we obtain 
\begin{align}\label{equality II}
\mbox{I}_{3}&=\int_{\Omega_{\varepsilon^{1/8}}}\big(\mathbb{C}^0e(v_{1}^{1}),e(v_{1}^{1})\big)\ dx\nonumber\\
&=\int_{\Omega_{\varepsilon^{1/8}}}\big(\mathbb{C}^0e(u_{1}^{1}),e(u_{1}^{1})\big)\ dx+2\int_{\Omega_{\varepsilon^{1/8}}}\big(\mathbb{C}^0e(u_{1}^{1}),e(v_{1}^{1}-u_{1}^{1})\big)\ dx\nonumber\\
&\quad+\int_{\Omega_{\varepsilon^{1/8}}}\big(\mathbb{C}^0e(v_{1}^{1}-u_{1}^{1}),e(v_{1}^{1}-u_{1}^{1})\big)\ dx\nonumber\\
&=\int_{\Omega_{\varepsilon^{1/8}}}\big(\mathbb{C}^0e(u_{1}^{1}),e(u_{1}^{1})\big)\ dx+O(\varepsilon^{1/8}).
\end{align}
Second,  by a direct computation, we obtain in $\Omega_{R}$,
\begin{align}\label{cal auxiliary}
|\partial_{x_{1}}(\bar{u}_{1}^{1})^{1}|,~|\partial_{x_{2}}(\tilde{u}_{1}^{1})^{2}|\leq\frac{C|x_{1}|}{\delta(x_{1})},\quad |\partial_{x_{1}}(\tilde{u}_{1}^{1})^{2}|\leq C,~\partial_{x_{2}}(\bar{u}_{1}^{1})^{1}=\frac{1}{\delta(x_{1})}.
\end{align}
For any $u=(u^{1},u^{2})^{\mathrm{T}}$, recalling the definition of $\mathbb{C}^0$, a direct calculation yields
	\begin{align}\label{formula Eu}		
	&\big(\mathbb{C}^0e(u),e(u)\big)\nonumber\\
	&=\lambda(\partial_{x_{1}}u^1+\partial_{x_{2}}u^2)^2+\mu\big(2(\partial_{x_{1}}u^1)^2+(\partial_{x_{2}}u^1+\partial_{x_{1}}u^2)^2+2(\partial_{x_{2}}u^2)^2\big).
	\end{align}
Substituting $u_{1}^{1}$ and $u_{1}^{*1}$ into \eqref{formula Eu} and using \eqref{cal auxiliary}, we have
	\begin{align*}
	&\int_{\Omega_{\varepsilon^{1/8}}}\big(\mathbb{C}^0e(u_{1}^{1}),e(u_{1}^{1})\big)\ dx+\int_{\Omega_{R}^{*}\setminus\Omega_{\varepsilon^{1/8}}^{*}}\big(\mathbb{C}^0e(u_{1}^{*1}),e(u_{1}^{*1})\big)\ dx\nonumber\\
	&=\mu\int_{|x_{1}|\leq \varepsilon^{1/8}}\frac{dx_{1}}{\varepsilon+\kappa|x_{1}|^{2}+o(|x_1|^{2})}+\mu\int_{\varepsilon^{1/8}<|x_{1}|\leq R}\frac{dx_{1}}{\kappa|x_{1}|^{2}+o(|x_1|^{2})}+O(\varepsilon^{1/8}).
	\end{align*}
	Notice that for the first term,
	\begin{align*}
	\int_{|x_{1}|\leq \varepsilon^{1/8}}\frac{dx_{1}}{\varepsilon+\kappa|x_{1}|^{2}+o(|x_1|^{2})}=\int_{|x_{1}|\leq \varepsilon^{1/8}}\frac{dx_{1}}{\varepsilon+\kappa|x_{1}|^{2}}+O(\varepsilon^{\frac{\gamma-1}{8}}),
	\end{align*}
	and for the second term,
	\begin{align*}
	\int_{\varepsilon^{1/8}<|x_{1}|\leq R}\frac{dx_{1}}{\kappa|x_{1}|^{2}+o(|x_1|^{2})}=\int_{\varepsilon^{1/8}<|x_{1}|\leq R}\frac{dx_{1}}{\kappa|x_{1}|^{2}}+O(\varepsilon^{\frac{\gamma-1}{8}}).
	\end{align*}
Let two right hand sides subtract,
	\begin{align*}
	\int_{\varepsilon^{1/8}<|x_{1}|\leq R}\left(\frac{1}{\kappa|x_{1}|^{2}}-\frac{1}{\varepsilon+\kappa|x_{1}|^{2}}\right)\ dx_1=O(\varepsilon^{5/8}).
	\end{align*}
	Then we obtain 
	\begin{align}\label{formula e(u1)}
	&\int_{\Omega_{\varepsilon^{1/8}}}\big(\mathbb{C}^0e(u_{1}^{1}),e(u_{1}^{1})\big)\ dx+\int_{\Omega_{R}^{*}\setminus\Omega_{\varepsilon^{1/8}}^{*}}\big(\mathbb{C}^0e(u_{1}^{*1}),e(u_{1}^{*1})\big)\ dx\nonumber\\
	&=\frac{\pi\mu}{\sqrt{\kappa}\sqrt{\varepsilon}}+O(|\log\varepsilon|).
	\end{align}
	
	Substituting \eqref{formula e(u1)} into \eqref{equality a11} and using \eqref{equality II}, we have
	\begin{equation}\label{a11 asym}
	a_{11}^{11}=\frac{\pi\mu}{\sqrt{\kappa}\sqrt{\varepsilon}}+O(|\log\varepsilon|)=\frac{\pi\mu}{\sqrt{\kappa}\sqrt{\varepsilon}}\left(1+O(\varepsilon^{\frac{\gamma+3}{8}})\right).
	\end{equation}
	
	(2) For $a_{11}^{22}$, we apply the auxiliary function defined in \eqref{auxiliary improved}. The rest of the proof is the same as in (1), except that we substitute $u_{1}^{2}$ and $u_{1}^{*2}$ into \eqref{formula Eu}. A direct calculation gives
	\begin{align}\label{cal auxiliary22}
	|\partial_{x_{1}}(\bar{u}_{1}^{2})^{2}|,~|\partial_{x_{2}}(\tilde{u}_{1}^{2})^{1}|\leq\frac{C|x_{1}|}{\delta(x_{1})},\quad |\partial_{x_{1}}(\tilde{u}_{1}^{2})^{1}|\leq C,~\partial_{x_{2}}(\bar{u}_{1}^{2})^{2}=\frac{1}{\delta(x_{1})}.
	\end{align}
	By using \eqref{cal auxiliary22} and the same argument as that in \eqref{formula e(u1)}, we obtain
	\begin{equation*}
	a_{11}^{22}=\frac{\pi(\lambda+2\mu)}{\sqrt{\kappa}\sqrt{\varepsilon}}\left(1+O(\varepsilon^{\frac{\gamma+3}{8}})\right).
	\end{equation*}
	So we complete the proof of Proposition  \ref{prop a11} in dimension two.
\end{proof}

\begin{proof}[\bf Proof of Proposition \ref{prop a11} for $d=3$.]
The proof is similar to the above until \eqref{formula Eu}. For $u=(u^{1},u^{2},u^{3})^{\mathrm{T}}$, \eqref{formula Eu} becomes 
\begin{align}\label{formula Eu dim3}	
		&\big(\mathbb{C}^0e(u),e(u)\big)\nonumber\\
		&=\lambda\Big(\partial_{x_{1}}u^1+\partial_{x_{2}}u^2+\partial_{x_{3}}u^3\Big)^2+\mu\Big(2(\partial_{x_{1}}u^1)^2+2(\partial_{x_{2}}u^2)^2+2(\partial_{x_{3}}u^3)^2\nonumber\\
		&\quad+(\partial_{x_{2}}u^1+\partial_{x_{1}}u^2)^2+(\partial_{x_{3}}u^1+\partial_{x_{1}}u^3)^2+(\partial_{x_{2}}u^3+\partial_{x_{3}}u^2)^2\Big).
		\end{align}
Substituting the specific function $u_{1}^{1}$, defined by  \eqref{auxiliary improved dim3}, into \eqref{formula Eu dim3} and recalling that in $\Omega_{R}$, 
		\begin{align*}
		|\partial_{x_{k}}u_{1}^{1}|\leq\frac{C|x'|}{\delta(x')}\quad\mbox{and}\quad \partial_{x_{3}}u_{1}^{1}=\frac{1}{\delta(x')},\quad k=1,2,
		\end{align*}
we find that \eqref{a11 asym} becomes 
\begin{align*}
a_{11}^{11}
&=\mu\int_{|x'|\leq \varepsilon^{1/8}}\frac{dx'}{\varepsilon+\kappa|x'|^{2}+o(|x'|^{2})}+\mu\int_{\varepsilon^{1/8}<|x'|\leq R}\frac{dx'}{\kappa|x'|^{2}+o(|x'|^{2})}+O(\varepsilon^{1/8})\nonumber\\
&=\mu\int_{|x'|\leq R}\frac{dx'}{\varepsilon+\kappa|x'|^{2}}+O(\varepsilon^{1/8})\nonumber\\
&=\frac{\pi\mu}{\kappa}|\log\varepsilon|+\mathcal{C}_{3}^{*1}+O(\varepsilon^{1/8}),
\end{align*}
where $\mathcal{C}_{3}^{*1}$ is a constant independent of $\varepsilon$ and $R$. Similarly,
\begin{align*}
a_{11}^{22}=\frac{\pi\mu}{\kappa}|\log\varepsilon|+\mathcal{C}_{3}^{*2}+O(\varepsilon^{1/8}),\quad\mbox{and}\quad a_{11}^{33}=\frac{\pi(\lambda+2\mu)}{\kappa}|\log\varepsilon|+\mathcal{C}_{3}^{*3}+O(\varepsilon^{1/8}),
\end{align*}
where $\mathcal{C}_{3}^{*2}$ and $\mathcal{C}_{3}^{*3}$ are constants independent of $\varepsilon$ and $R$. Hence, Proposition \ref{prop a11} in dimension three is proved.
\end{proof}

\section{The proof of Theorem \ref{thm1} and Theorem \ref{thm2}}\label{sec pf thm1}
In this section, we prove our main results, Theorem \ref{thm1} and Theorem \ref{thm2}. Because the estimates of $\nabla v_{i}^{\alpha}$ have been proved in Theorems \ref{auxi thm}, \ref{coro v1}, and \ref{coro v1d+1}, recalling the proof of Proposition \ref{conv C alpha},  we have $C_{1}^{\alpha}=C_{2}^{\alpha}$, $\alpha=d+1,\cdots,d(d+1)/2$, and \eqref{nablau_dec} becomes
\begin{equation}\label{nablau_dec_d}
\nabla{u}=\sum_{\alpha=1}^{d}\left(C_{1}^{\alpha}-C_{2}^{\alpha}\right)\nabla{v}_{1}^{\alpha}
+\nabla u_{b},\quad\mbox{in}~\Omega.
\end{equation}	
Thus, it remains to derive the asymptotics of $C_{1}^{\alpha}-C_{2}^{\alpha}$, $\alpha=1,\cdots,d$. To achieve this, 
we use \eqref{nablau_dec_d} and the forth line of \eqref{maineqn} to obtain
\begin{align}\label{C1C2_2}
\sum_{\alpha=1}^{d}(C_{1}^{\alpha}-C_{2}^{\alpha})a_{11}^{\alpha\beta}&=b_{1}^{\beta},\quad~~\beta=1,\cdots,d(d+1)/2,
\end{align}
where $a_{11}^{\alpha\beta}$ is defined in \eqref{def_aij}, and $b_{1}^{\beta}$ is defined in \eqref{def_bj}.   Let us first consider $d=2$.

\subsection{{\bf Proof of Theorem \ref{thm1}}}
\begin{proof}
We denote \eqref{C1C2_2} in block matrix as follows:
\begin{align}\label{block matrix}
\begin{pmatrix}
~a_{11}^{11}&0~\\\\
~0&a_{11}^{22}
\end{pmatrix}\begin{pmatrix}
~C_{1}^{1}-C_{2}^{1}~\\\\
C_{1}^{2}-C_{2}^{2}
\end{pmatrix}
=\begin{pmatrix}
~b_{1}^{1}~\\\\
b_{1}^{2}
\end{pmatrix},
\end{align}
here we used $a_{11}^{12}=a_{11}^{21}=0$ in \eqref{matrix}. 

By using Cramer's rule, Proposition \ref{prop a11}, and Proposition \ref{prop converge b}, we have
\begin{align}\label{differece C1 C2}
C_{1}^{1}-C_{2}^{1}
=\frac{b_{1}^{1}}{a_{11}^{11}}=\frac{\sqrt{\kappa}}{\pi\mu}\cdot\sqrt{\varepsilon}\left(b_{1}^{*1}[\varphi]+O(\sqrt{\varepsilon})\right)\left(1+O(\varepsilon^{\frac{\gamma+3}{8}})\right).
\end{align}
Similarly,
\begin{align}\label{differece C1 C2000}
C_{1}^{2}-C_{2}^{2}
=\frac{\sqrt{\kappa}}{\pi(\lambda+2\mu)}\cdot\sqrt{\varepsilon}\left(b_{1}^{*2}[\varphi]+O(\sqrt{\varepsilon})\right)\left(1+O(\varepsilon^{\frac{\gamma+3}{8}})\right).
\end{align}
Then for any $x\in\Omega_{R}$, by using \eqref{nablau_dec_d},  Theorems \ref{auxi thm} and \ref{coro v1},  \eqref{differece C1 C2}, and \eqref{differece C1 C2000}, we obtain
\begin{align*}
\nabla u&=\sum_{\alpha=1}^{2}\left(C_{1}^{\alpha}-C_{2}^{\alpha}\right)\nabla u_{1}^{\alpha}+O(1)\\
&=\frac{\sqrt{\kappa\varepsilon}}{\pi\mu}b_{1}^{*1}[\varphi]\nabla u_{1}^{1}\left(1+O(\varepsilon^{\frac{\gamma+3}{8}})\right)+\frac{\sqrt{\kappa\varepsilon}}{\pi(\lambda+2\mu)}b_{1}^{*2}[\varphi]\nabla u_{1}^{2}\left(1+O(\varepsilon^{\frac{\gamma+3}{8}})\right)\\
&\quad+O(1)\\
&=\frac{\sqrt{\kappa}}{\pi}\cdot\sqrt{\varepsilon}\left(\frac{b_{1}^{*1}[\varphi]}{\mu}\nabla u_{1}^{1}+\frac{b_{1}^{*2}[\varphi]}{\lambda+2\mu}\nabla u_{1}^{2}\right)(1+O(\varepsilon^{\frac{\gamma+3}{8}}))+O(1).
\end{align*}
The proof of Theorem \ref{thm1} is finished.
\end{proof}

\subsection{{\bf Proof of Theorem \ref{thm2}}}
\begin{proof}
In dimension three, in order to solve $C_{1}^{\alpha}-C_{2}^{\alpha}$ from \eqref{C1C2_2}, we rewrite it as
\begin{align}\label{block matrix d=3}
\begin{pmatrix}
~a_{11}^{11}&0&0~\\\\
~0&a_{11}^{22}&0\\\\
~0&0&a_{11}^{33}
\end{pmatrix}\begin{pmatrix}
~C_{1}^{1}-C_{2}^{1}~\\\\
C_{1}^{2}-C_{2}^{2}~\\\\
C_{1}^{3}-C_{2}^{3}
\end{pmatrix}
=\begin{pmatrix}
~b_{1}^{1}~\\\\
b_{1}^{2}~\\\\
b_{1}^{3}
\end{pmatrix},
\end{align}
here we also used the fact that $a_{11}^{\alpha\beta}=0$ by the symmetry of domains, $\alpha,\beta=1,2,3$, $\alpha\neq\beta$. Then similar to \eqref{differece C1 C2} and \eqref{differece C1 C2000}, we have
\begin{align}\label{difference C d=3}
C_{1}^{\alpha}-C_{2}^{\alpha}&=\frac{\kappa}{\pi\mu}\cdot\frac{1}{|\log\varepsilon|}b_{1}^{*\alpha}[\varphi]\left(1+O(|\log\varepsilon|^{-1})\right),\quad \alpha=1,2,\nonumber\\
C_{1}^{3}-C_{2}^{3}&=\frac{\kappa}{\pi(\lambda+2\mu)}\cdot\frac{1}{|\log\varepsilon|}b_{1}^{*3}[\varphi]\left(1+O(|\log\varepsilon|^{-1})\right).
\end{align}
For any $x\in\Omega_{R}$, by using \eqref{nablau_dec_d}, Theorems \ref{coro v1} and \ref{coro v1d+1}, and \eqref{difference C d=3}, we have
\begin{align*}
\nabla u&=
\sum_{\alpha=1}^{3}(C_{1}^{\alpha}-C_{2}^{\alpha})\nabla u_{1}^{\alpha}+O(1)\\
&=
\frac{\kappa}{\pi}\cdot\frac{1}{|\log\varepsilon|}
\left(\frac{1}{\mu}\sum_{\alpha=1}^{2}b_{1}^{*\alpha}[\varphi]\nabla u_{1}^{\alpha}+\frac{1}{\lambda+2\mu}b_{1}^{*3}[\varphi]\nabla u_{1}^{3}\right)\left(1+O(|\log\varepsilon|^{-1})\right)+O(1).
\end{align*}
The proof of Theorem \ref{thm2} is completed.
\end{proof}

\section{The proof of Theorem \ref{thmhigher} and Theorem \ref{thmhigher2}}\label{proof general}
\subsection{The proof of Theorem \ref{thmhigher}} \label{subsec prf thm}
By using a similar argument that led to Propositions \ref{prop a11} and \ref{prop converge b}, we obtain

\begin{prop}\label{prop a1133}
Under the assumptions \eqref{convexity} and \eqref{h1h3} with $m\geq3$, we have, for sufficiently small $\varepsilon>0$,
	\begin{equation*}
	a_{11}^{11}=
	\frac{\mu Q_{2,m}}{\kappa^{1/m}\varepsilon^{1-1/m}}+O(1),\quad 	a_{11}^{22}=
	\frac{(\lambda+2\mu) Q_{2,m}}{\kappa^{1/m}\varepsilon^{1-1/m}}+O(1),\quad m\geq3,
	\end{equation*}
	and
	\begin{equation*}
	a_{11}^{33}=
	\begin{cases}
	\frac{2(\lambda+2\mu)}{3\kappa}|\log\varepsilon|+O(1),\quad m=3,\\
	\frac{(\lambda+2\mu) \widetilde{Q}_{2,m}}{\kappa^{3/m}\varepsilon^{1-3/m}}+
	O(1),\quad m\geq4,
	\end{cases}
	\end{equation*}
	where
	$$Q_{2,m}=2\int_{0}^{\infty}\frac{1}{1+t^{m}}\ dt,\quad \widetilde{Q}_{2,m}=2\int_{0}^{\infty}\frac{t^{2}}{1+t^{m}}\ dt.$$
\end{prop}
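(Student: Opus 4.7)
My plan is to imitate the three-region decomposition used in the proof of Proposition \ref{prop a11}, replacing the quadratic gap function $\delta(x_1) \sim \varepsilon + \kappa |x_1|^2$ by its $m$-convex analogue $\delta(x_1) \sim \varepsilon + \kappa |x_1|^m$. Write
\begin{equation*}
a_{11}^{\alpha\alpha} = \int_{\Omega \setminus \Omega_R}(\mathbb{C}^0e(v_1^\alpha),e(v_1^\alpha))\,dx + \int_{\Omega_R \setminus \Omega_{\varepsilon^{\theta}}}(\mathbb{C}^0e(v_1^\alpha),e(v_1^\alpha))\,dx + \int_{\Omega_{\varepsilon^{\theta}}}(\mathbb{C}^0e(v_1^\alpha),e(v_1^\alpha))\,dx,
\end{equation*}
for a threshold $\theta = \theta(m) > 0$ to be chosen to balance errors. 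On the outermost region the integrand is $O(1)$ by standard interior estimates; on the middle region I would compare $v_1^\alpha$ to the touching solution $v_1^{*\alpha}$ via an $m$-convex version of Lemma \ref{lem difference v11}, replacing the rescaled cell $|z_1|^2 \to |z_1|^m$ in the blow-up argument, and absorb the resulting interpolation errors into an $O(1)$ term. On the innermost region, Theorem \ref{coro v1} (for $\alpha=1,2$) and Theorem \ref{coro v1d+1} (for $\alpha=3$) allow me to replace $v_1^\alpha$ by the explicit auxiliary function $u_1^\alpha$ up to bounded terms, reducing everything to the energy of the auxiliary functions.

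The only integrals that carry the singular contribution come from the $(\partial_{x_2} u^i)^2$ terms in \eqref{formula Eu}, because the other derivatives of $u_1^\alpha$ are either bounded or of order $|x_1|/\delta(x_1)$, so their squares against the $x_2$-slab of width $\delta(x_1)$ integrate to something bounded. For $\alpha=1,2$ the dominant contribution is $\mu/\delta^2$ and $(\lambda+2\mu)/\delta^2$ respectively, and integration over $x_2$ collapses this to
\begin{equation*}
\int_{-R}^{R} \frac{dx_1}{\varepsilon + \kappa|x_1|^m + O(|x_1|^{m+1})} = \frac{Q_{2,m}}{\kappa^{1/m}\varepsilon^{1-1/m}} + O(1),
\end{equation*}
via the substitution $t = (\kappa/\varepsilon)^{1/m} x_1$, giving the claimed formulas for $a_{11}^{11}$ and $a_{11}^{22}$. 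For $\alpha = 3$ the auxiliary function is $u_1^3 = \bar{u}\psi_3$ with $\psi_3 = x_1 e_2 - x_2 e_1$, so $\partial_{x_2}(u_1^3)^2 = x_1/\delta(x_1)$ is the leading derivative and the dominant integrand is $(\lambda+2\mu)x_1^2/\delta^2$; the same rescaling produces $\widetilde{Q}_{2,m}$ whenever $m \geq 4$, while the critical case $m = 3$ is handled separately by the explicit primitive $s = \varepsilon + \kappa x_1^3$, yielding the logarithmic constant $\tfrac{2}{3\kappa}$.

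The main obstacle is adapting Lemma \ref{lem difference v11}, whose proof used the quadratic form of $\delta$ both in the cell-rescaling step and in the pointwise estimates \eqref{partial D11*}--\eqref{partial D21*}. Replacing $|x'|^2$ by $|x'|^m$ changes the cell proportions and forces a reoptimization of $\theta$ so that the boundary error (of order $\varepsilon^{1-(m-1)\theta}$ on $\partial D_1^*$) and the bulk error (of order $\varepsilon^{m\theta}$ from integrating $\partial_{x_d}(v_1^\alpha - v_1^{*\alpha})$ across the slab) are both $o(\varepsilon^{-(1-1/m)})$. A secondary bookkeeping issue is that the $O(|x_1|^{m+1})$ perturbation of $h_1-h_2$ contributes only to the $O(1)$ remainder of the final integral; this follows by expanding $(\varepsilon + \kappa|x_1|^m + O(|x_1|^{m+1}))^{-1}$ around $(\varepsilon + \kappa|x_1|^m)^{-1}$ and observing that the extra factor $|x_1|/(\varepsilon + \kappa|x_1|^m)$ is uniformly integrable on $(-R,R)$ for every $m \geq 3$.
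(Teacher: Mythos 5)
Your overall strategy is the same one the paper intends (it gives no detailed proof of Proposition \ref{prop a1133}, only the remark that it follows ``by a similar argument'' to Proposition \ref{prop a11}), and your leading-order computations are all correct: the dominant contributions $\mu/\delta^2$, $(\lambda+2\mu)/\delta^2$ and $(\lambda+2\mu)x_1^2/\delta^2$, the rescaling $t=(\kappa/\varepsilon)^{1/m}x_1$ producing $Q_{2,m}$ and $\widetilde Q_{2,m}$, and the separate primitive $s=\varepsilon+\kappa x_1^3$ giving the constant $\tfrac{2}{3\kappa}$ at the critical exponent $m=3$. One structural simplification you miss: since the target accuracy here is only additive $O(1)$ (not the refined relative error of Proposition \ref{prop a11}), you do not need the touching-solution comparison or an $m$-convex analogue of Lemma \ref{lem difference v11} at all. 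Theorem \ref{coro v1} (resp.\ \ref{coro v1d+1}) applied on all of $\Omega_R$, together with $\int_{\Omega_R}|\nabla u_1^\alpha|\,dx\le C\int_{-R}^{R}\delta(x_1)\cdot C\delta(x_1)^{-1}dx_1=O(1)$ for the cross term, already reduces everything to $\int_{\Omega_R}\bigl(\mathbb{C}^0e(u_1^\alpha),e(u_1^\alpha)\bigr)dx+O(1)$; the middle region and the reoptimization of $\theta$ that you flag as ``the main obstacle'' are a detour.

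There is, however, one concrete error in your final bookkeeping step. The factor $|x_1|/(\varepsilon+\kappa|x_1|^m)$ is \emph{not} uniformly integrable on $(-R,R)$ for $m\ge3$: the same substitution $t=(\kappa/\varepsilon)^{1/m}x_1$ gives
\begin{equation*}
\int_{-R}^{R}\frac{|x_1|\,dx_1}{\varepsilon+\kappa|x_1|^{m}}
=\frac{2}{\kappa^{2/m}\varepsilon^{1-2/m}}\int_{0}^{R(\kappa/\varepsilon)^{1/m}}\frac{t\,dt}{1+t^{m}}
\sim C\,\varepsilon^{-(1-2/m)}\longrightarrow\infty ,
\end{equation*}
and at $\varepsilon=0$ the integrand $|x_1|^{1-m}$ is not even locally integrable for $m\ge2$. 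Consequently the $O(|x'|^{m+1})$ perturbation of $h_1-h_2$ contributes $O(\varepsilon^{-(1-2/m)})$ to $a_{11}^{11}$ and $a_{11}^{22}$ (and $O(\varepsilon^{-(1-4/m)})$, resp.\ $O(|\log\varepsilon|)$ when $m=4$, to $a_{11}^{33}$), not $O(1)$. This does not damage the leading asymptotics: the contribution is smaller than the main term by a factor $\varepsilon^{1/m}$, so what you actually obtain is $a_{11}^{11}=\frac{\mu Q_{2,m}}{\kappa^{1/m}\varepsilon^{1-1/m}}\bigl(1+O(\varepsilon^{1/m})\bigr)$, which is exactly the accuracy consumed by the relative errors $\rho_{m,2}(\varepsilon)$ in Theorem \ref{thmhigher}. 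But you should either state the conclusion in this multiplicative form or strengthen the hypothesis on the remainder in \eqref{convexity}; the additive $O(1)$ as written is not reached by your argument.
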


\begin{prop}\label{prop converge B}
	Under the above assumptions, we have for small enough $\varepsilon>0$, if $\alpha=1,2$,
	\begin{align*}
b_1^{\alpha}[\varphi]-b_{1}^{*\alpha}[\varphi]
	=
	\begin{cases}
	O(|\log\varepsilon|^{-1}),~m=3,\\
	O(\varepsilon^{1/4}),~ m=4,\\
	O(\varepsilon^{1/3}),~ m\geq5,
	\end{cases}
	b_1^{3}[\varphi]-b_1^{*3}[\varphi]
	=
	\begin{cases}
	O(|\log\varepsilon|^{-1}),~m=3,\\
	O(\varepsilon^{1-\frac{3}{m}}),~m=4,5,\\
	O(\varepsilon^{\frac{m+2}{3m}}),~ m\geq6,
	\end{cases}
	\end{align*}
	where $b_1^{\alpha}[\varphi]$and $b_1^{*\alpha}$ are defined in \eqref{def_bj},  $\alpha=1,2,3$.
\end{prop}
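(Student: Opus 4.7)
The plan is to follow the blueprint of Proposition \ref{prop converge b}, with the $2$-convex geometry replaced by the $m$-convex profile encoded in \eqref{convexity}. Starting from the decomposition \eqref{difference bi beta}, I would split
$$b_1^\beta[\varphi]-b_1^{*\beta}[\varphi] = \mathrm{(I)} + \mathrm{(II)} + \mathrm{(III)},$$
where (I) collects $\int_{\partial D_1}\partial_\nu v_0\cdot\psi_\beta - \int_{\partial D_1^*}\partial_\nu v_0^*\cdot\psi_\beta$, (II) is $\sum_\alpha C_2^\alpha\bigl(\int_{\partial D_1}\partial_\nu v^\alpha\cdot\psi_\beta - \int_{\partial D_1^*}\partial_\nu v^{*\alpha}\cdot\psi_\beta\bigr)$, and (III) is $\sum_\alpha (C_2^\alpha - C_*^\alpha)\int_{\partial D_1^*}\partial_\nu v^{*\alpha}\cdot\psi_\beta$. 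Each of the three pieces is re-estimated in the $m$-convex geometry, and then the stated $m$-dependent rates are read off by taking the worst of the three.

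First I would establish the analog of Lemma \ref{lem difference v11}. The key change is that now $\delta(x')\sim\varepsilon+\kappa|x'|^m$, $|\partial_{x'}\bar u|\lesssim|x'|^{m-1}/(\varepsilon+|x'|^m)$, $\partial_{x_d}\bar u=1/\delta(x')$, and $\partial_{x_d}\bar u^{*}\sim|x'|^{-m}$. Rerunning the boundary-to-interior comparison (mean value theorem on $\partial D_1\setminus D_1^*$, Theorem \ref{coro v1}/\ref{coro v1d+1} on $\partial D_1^*\setminus(D_1\cup\mathcal C_{\varepsilon^\theta})$, triangle inequality plus interpolation on the slice $|x'|=\varepsilon^\theta$) gives two competing boundary contributions $\varepsilon^{m\theta}$ and $\varepsilon^{1-m\theta}$ for $\alpha=1,2$, so $\theta=1/(2m)$ is optimal and produces an $m$-dependent boundary bound for $v_1^\alpha-v_1^{*\alpha}$. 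For the rotation index $\alpha=3$ the auxiliary function is $u_1^3=\bar u\psi_3$ from \eqref{auxi fun d+1}, whose growth $|\partial_{x_d}(u_1^3-u_1^{*3})|\lesssim \varepsilon/\bigl(|x'|(\varepsilon+|x'|^m)\bigr)$ is milder, and a different choice of $\theta$ yields the sharper bound. Transferring these pointwise bounds to $\partial D$ by the maximum principle of \cite{mmn}, combined with the standard boundary gradient estimate for Lam\'e systems applied to the identity $\tilde b_1^\beta-\tilde b_1^{*\beta}=-\int_{\partial D}\partial_\nu(v_1^\beta-v_1^{*\beta})\cdot\varphi$ from the proof of Lemma \ref{es b1 b1* beta=1} (and its twin in Lemma \ref{lem valpha1}), yields the $m$-dependent estimates needed for (I) and (II).

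Second I would derive the analog of Proposition \ref{conv C alpha} for $m\geq3$. No symmetry hypothesis is imposed in Proposition \ref{prop converge B}, so instead of exploiting cancellations in the matrix $A$ I would feed the asymptotics of Proposition \ref{prop a1133} and the off-diagonal bounds of $a_{11}^{\alpha\beta}$ (controlled by H\"older's inequality together with Theorems \ref{coro v1} and \ref{coro v1d+1}) into Cramer's rule. When $m\geq d+1=3$ the leading diagonal terms $a_{11}^{\alpha\alpha}+a_{21}^{\alpha\alpha}$ dominate the right-hand sides $\tilde b_j^\beta-\tilde b_j^{*\beta}$ by a factor of order $\varepsilon^{-(1-1/m)}$ (or $\varepsilon^{-(1-3/m)}$ for $\alpha=3$), giving $|C_1^\alpha-C_2^\alpha|$ and $|C_2^\alpha-C_*^\alpha|$ small on their own. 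Substituting these convergence rates into (III) and collecting with (I) and (II) produces the claimed bounds.

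The principal obstacle is the accurate bookkeeping of the exponents: one must track carefully how the competing powers $\varepsilon^{m\theta}$ and $\varepsilon^{1-\mu\theta}$ interact for each triple $(d,m,\alpha)$, and detect the borderline values of $m$ at which $\int dx'/(\varepsilon+|x'|^m)$ produces a $|\log\varepsilon|$; these are precisely the values $m=3$ (for $\alpha=1,2$) and $m=3$ (for $\alpha=3$, via the $\widetilde Q_{2,m}$ integral) flagged in the statement. A secondary difficulty is that for $\alpha=3$ the auxiliary function $\bar u\psi_3$ does not absorb the $(\lambda+\mu)/\mu$ correction carried by $\tilde u_1^\alpha$ in \eqref{auxiliary improved}, so the error $|\partial_\nu(v_1^3-v_1^{*3})|$ at $\partial D$ must be handled using the $C^{1,\gamma}$ extension of the boundary data together with Theorem \ref{auxi thm}. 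Once these two technical points are settled, the three pieces $\mathrm{(I)},\mathrm{(II)},\mathrm{(III)}$ assemble into the required rates.
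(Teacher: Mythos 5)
The paper offers no written proof of Proposition \ref{prop converge B}: it is introduced with the single sentence ``by using a similar argument that led to Propositions \ref{prop a11} and \ref{prop converge b}, we obtain\dots''. Your plan --- reuse the decomposition \eqref{difference bi beta} into the three pieces (I), (II), (III), redo Lemma \ref{lem difference v11} and Lemmas \ref{es b1 b1* beta=1}--\ref{lem valpha1} with the $m$-convex profile, and control (III) through the convergence of the free constants via Proposition \ref{prop a1133} and Cramer's rule --- is therefore exactly the route the authors intend, and structurally there is nothing to object to.

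The gap is that your proposal never actually produces the stated exponents, and the one exponent you do compute points the wrong way. Balancing $\varepsilon^{m\theta}$ against $\varepsilon^{1-m\theta}$ at $\theta=1/(2m)$ gives the rate $\varepsilon^{1/2}$, which is \emph{$m$-independent} (not ``$m$-dependent'' as you claim) and is strictly better than every rate in the statement ($|\log\varepsilon|^{-1}$, $\varepsilon^{1/4}$, $\varepsilon^{1/3}$, $\varepsilon^{(m+2)/(3m)}$). Likewise the $\alpha=3$ balance $\varepsilon^{m\theta}$ vs.\ $\varepsilon^{1-\theta}$ gives $\varepsilon^{m/(m+1)}$, again better than anything claimed. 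So pieces (I) and (II) cannot be the bottleneck, and the proposition's rates must be governed by piece (III), i.e.\ by $|C_2^\alpha-C_*^\alpha|$ --- note for instance that $|C_1^3-C_2^3|\sim b_1^3/a_{11}^{33}$ reproduces $|\log\varepsilon|^{-1}$ for $m=3$ and $\varepsilon^{1-3/m}$ for $m=4,5$, matching several entries of the table. Your sketch of (III) is the weakest part: without the symmetry hypotheses the off-diagonal entries $a_{11}^{13}$, $a_{12}^{13}$, etc.\ do not vanish, a bare Cauchy--Schwarz bound $|a_{11}^{13}|\le\sqrt{a_{11}^{11}a_{11}^{33}}$ is too crude to make the linear system effectively diagonal (the resulting bound on the coupling terms $(C_1^\alpha-C_2^\alpha)(a_{11}^{\alpha\beta}+a_{12}^{\alpha\beta})$ can even diverge as $\varepsilon^{-1/m}$), and one must instead compute the cross energies from the explicit auxiliary functions \eqref{auxiliary improved} and \eqref{auxi fun d+1}, exploiting the cancellation $h_1+h_2=O(|x_1|^{m+1})$ coming from $\kappa_1=\kappa_2$. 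Until that computation is done and the worst of the three pieces is identified for each $(m,\alpha)$, the specific rates in the statement --- in particular $\varepsilon^{1/3}$ for $\alpha=1,2$, $m\ge5$ and $\varepsilon^{(m+2)/(3m)}$ for $\alpha=3$, $m\ge6$ --- remain unverified.
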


We are ready to finish the proof of Theorem \ref{thmhigher}.
\begin{proof}[\bf Completion of the proof of Theorem \ref{thmhigher}.]
For any $x\in\Omega_{R}$, by using  \eqref{nablau_dec},  Theorems \ref{auxi thm} and \ref{coro v1}, we have
	\begin{align}\label{nablau R2}
	\nabla u=\sum_{\alpha=1}^{3}\left(C_{1}^{\alpha}-C_{2}^{\alpha}\right)
	\nabla u_{1}^{\alpha}+O(1),\quad m\geq3.
	\end{align}
Recalling \eqref{block matrix}, and using Cramer's rule, Propositions  \ref{prop a1133} and \ref{prop converge B}, if $m=3$, we have
\begin{align*}	C_{1}^1-C_{2}^1&=\frac{b_{1}^{1}[\varphi]}{a_{11}^{11}}=\frac{\kappa^{1/3}\varepsilon^{2/3}}{\mu Q_{2,3}}b_{1}^{*1}[\varphi]\left(1+O(|\log\varepsilon|^{-1})\right),\\
C_{1}^2-C_{2}^2
	&=\frac{b_{1}^{2}[\varphi]}{a_{11}^{22}}=\frac{\kappa^{1/3}\varepsilon^{2/3}}{(\lambda+2\mu) Q_{2,3}}b_{1}^{*2}[\varphi]\left(1+O(|\log\varepsilon|^{-1})\right)\\
	C_{1}^3-C_{2}^3
	&=\frac{3\kappa}{2(\lambda+2\mu)|\log\varepsilon|}b_{1}^{*3}[\varphi]\left(1+O(|\log\varepsilon|^{-1})\right).
	\end{align*}
	Hence, coming back to \eqref{nablau R2}, for $x\in\Omega_{R}$,
	\begin{align*}
	\nabla u
	&=\frac{\kappa^{1/3}\varepsilon^{2/3}}{ Q_{2,3}}\left(\frac{b_{1}^{*1}[\varphi]}{\mu}\nabla u_{1}^{1}+\frac{b_{1}^{*2}[\varphi]}{\lambda+2\mu}\nabla u_{1}^{2}\right)\left(1+O(|\log\varepsilon|^{-1})\right)\\
	&\quad+\frac{3\kappa}{2(\lambda+2\mu)|\log\varepsilon|}b_{1}^{*3}[\varphi]\nabla u_{1}^{3}\left(1+O(|\log\varepsilon|^{-1})\right)+O(1).
	\end{align*}
The cases $m\geq4$ follow from the same argument as above, we omit them here. This completes the proof of Theorem \ref{thmhigher}.
\end{proof}

\subsection{The proof of Theorem \ref{thmhigher2}}

(i) The proof of the case $m=3$ is very similar to that in the proof of Theorem \ref{thm1} when $d=3$ and $m=2$. We omit it here.

(ii)  For $m\geq4$, we have
\begin{prop}\label{prop a1133 R3}
	Under the assumptions \eqref{convexity} and \eqref{h1h3} with $m\geq4$, we have,  for sufficiently small $\varepsilon>0$ and $m\geq4$,
	\begin{equation*}
	a_{11}^{\alpha\alpha}=
	\frac{\pi\mu Q_{3,m}}{\kappa^{2/m}\varepsilon^{1-2/m}}+O(1),~\alpha=1,2,\quad a_{11}^{33}=
	\frac{\pi(\lambda+2\mu) Q_{3,m}}{\kappa^{2/m}\varepsilon^{1-2/m}}+O(1),
	\end{equation*}
	\begin{equation*}
	a_{11}^{44}=
	\begin{cases}
	\frac{\pi\mu}{2\kappa}|\log\varepsilon|+
	O(1),~\quad m=4,\\
	\frac{\pi\mu \widetilde{Q}_{3,m}}{\kappa^{4/m}\varepsilon^{1-4/m}}+
	O(1),~\quad m\geq5,
	\end{cases}
	\end{equation*}
	and for $\alpha=5,6$,
	\begin{equation*}
	a_{11}^{\alpha\alpha}=
	\begin{cases}
	\frac{\pi(\lambda+2\mu)}{4\kappa}|\log\varepsilon|+O(1),~\quad m=4,\\
	\frac{\pi(\lambda+2\mu) \widetilde{Q}_{3,m}}{2\kappa^{4/m}\varepsilon^{1-4/m}}+
	O(1),~\quad m\geq5,
	\end{cases}
	\end{equation*}
	where
	$$Q_{3,m}=2\int_{0}^{\infty}\frac{t}{1+t^{m}}\ dt,\quad \widetilde{Q}_{3,m}=2\int_{0}^{\infty}\frac{t^{3}}{1+t^{m}}\ dt.$$
\end{prop}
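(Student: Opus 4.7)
The plan is to mirror the proof of Proposition \ref{prop a11}, adapting each step to general convexity order $m\ge 4$ in three dimensions. Starting from
\[
a_{11}^{\alpha\alpha}=\int_{\Omega}\big(\mathbb{C}^{0}e(v_{1}^{\alpha}),e(v_{1}^{\alpha})\big)\,dx,
\]
I would split the domain as $\Omega=(\Omega\setminus\Omega_{R})\cup(\Omega_{R}\setminus\Omega_{\varepsilon^{\tau}})\cup\Omega_{\varepsilon^{\tau}}$ for a small exponent $\tau>0$ depending on $m$. On the outer region, the analogue of Lemma \ref{lem difference v11} for $m\ge 4$ (proved by the same maximum-principle and mean-value argument, replacing $\kappa|x'|^{2}$ by $\kappa|x'|^{m}$) together with interior $C^{2,\gamma}$ bounds and an interpolation gives $v_{1}^{\alpha}\to v_{1}^{*\alpha}$ in $C^{1}$ away from the origin, so this piece contributes an $\varepsilon$-independent constant modulo $O(1)$. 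On the middle annular region one rescales each subcube $\Omega_{|z'|+|z'|^{m}}\setminus\Omega_{|z'|}$ to unit size, so that the interpolation yields $|\nabla(v_{1}^{\alpha}-v_{1}^{*\alpha})|\lesssim \varepsilon^{\theta}|x'|^{-m}$ for some $\theta>0$, whose squared $L^{2}$ mass is $O(1)$. This reduces the problem to an explicit computation on $\Omega_{\varepsilon^{\tau}}$, where Theorems \ref{coro v1} and \ref{coro v1d+1} allow one to replace $v_{1}^{\alpha}$ by $u_{1}^{\alpha}$ at the cost of an $O(1)$ remainder.

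For the translational indices $\alpha\in\{1,2,3\}$, one uses $u_{1}^{\alpha}$ from \eqref{auxiliary improved dim3}. A direct substitution into \eqref{formula Eu dim3} shows that the leading contribution is $\mu/\delta(x')^{2}$ for $\alpha=1,2$ and $(\lambda+2\mu)/\delta(x')^{2}$ for $\alpha=3$, while the corrector $\tilde{u}_{1}^{\alpha}$ contributes only bounded quantities. After integrating out $x_{3}$ one is left with the respective constant multiple of $\int_{|x'|<R}dx'/\delta(x')$, and polar coordinates together with the rescaling $r=(\varepsilon/\kappa)^{1/m}t$ yield
\[
\int_{|x'|<R}\frac{dx'}{\varepsilon+\kappa|x'|^{m}+O(|x'|^{m+1})}=\frac{\pi Q_{3,m}}{\kappa^{2/m}\varepsilon^{1-2/m}}+O(1),
\]
since $Q_{3,m}$ converges for $m\ge 3$. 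This produces the first three asserted formulas for $a_{11}^{11}$, $a_{11}^{22}$, $a_{11}^{33}$.

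For the rotational indices I take $\psi_{4}=x_{1}e_{2}-x_{2}e_{1}$, $\psi_{5}=x_{1}e_{3}-x_{3}e_{1}$, $\psi_{6}=x_{2}e_{3}-x_{3}e_{2}$ and $u_{1}^{\alpha}=\bar{u}\psi_{\alpha}$ from \eqref{auxi fun d+1}. Because $\partial_{x_{3}}\bar{u}=1/\delta(x')$ and $|x_{3}|\lesssim\delta(x')$ throughout $\Omega_{R}$, the singular part of $\nabla u_{1}^{\alpha}$ is $\psi_{\alpha}/\delta(x')$ in the $x_{3}$-slot while every other derivative is bounded. Substituting into \eqref{formula Eu dim3}, the in-plane components of $\psi_{4}$ produce $\mu\big((\partial_{x_{3}}u^{1})^{2}+(\partial_{x_{3}}u^{2})^{2}\big)\approx \mu|x'|^{2}/\delta(x')^{2}$ via the shear terms, while for $\psi_{5}$ and $\psi_{6}$ the $e_{3}$-components $x_{1}$ and $x_{2}$ produce $(\lambda+2\mu)(\partial_{x_{3}}u^{3})^{2}$ equal respectively to $(\lambda+2\mu)x_{1}^{2}/\delta(x')^{2}$ and $(\lambda+2\mu)x_{2}^{2}/\delta(x')^{2}$; all remaining cross terms are $O(1)$ after the $dx$-integration. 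Integrating $x_{3}$ and using the radial symmetry $\int x_{1}^{2}=\int x_{2}^{2}=\tfrac{1}{2}\int|x'|^{2}$ (for $\alpha=5,6$) reduces the computation to
\[
2\pi\int_{0}^{R}\frac{r^{3}}{\varepsilon+\kappa r^{m}}\,dr,
\]
which equals $\frac{\pi}{2\kappa}|\log\varepsilon|+O(1)$ when $m=4$ and, via the same rescaling $r=(\varepsilon/\kappa)^{1/m}t$, equals $\frac{\pi\widetilde{Q}_{3,m}}{\kappa^{4/m}\varepsilon^{1-4/m}}+O(1)$ when $m\ge 5$. Multiplying by the appropriate modulus and, for $\alpha=5,6$, the symmetry factor $1/2$, recovers the stated formulas for $a_{11}^{44}$, $a_{11}^{55}$, $a_{11}^{66}$.

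The main obstacle I anticipate is the bookkeeping of $O(1)$ errors in the case $m=4$, where the leading behaviour is merely logarithmic and so every subleading term must be confirmed bounded uniformly in $\varepsilon$. In particular one has to verify that the perturbation $O(|x'|^{m+1})$ inside $\delta(x')$, the cross terms produced by the $(\nabla\cdot u)^{2}$ piece of the quadratic form, and the tangential derivatives of the $x$-dependent coefficients $\psi_{\alpha}$ (for $\alpha\ge 4$) each yield integrals that stay bounded uniformly in $\varepsilon$. This is indeed the case, since after the $x_{3}$-integration each such term is dominated by an $\varepsilon$-independent $L^{1}$ function on the disk $\{|x'|<R\}$; the verification is routine but must be done diagonal by diagonal.
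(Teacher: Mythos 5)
Your proposal is correct and follows essentially the same route as the paper, which itself only states this proposition as a consequence of "the same argument as Proposition \ref{prop a11}": decompose $a_{11}^{\alpha\alpha}$ into outer, middle, and inner regions, reduce to the energy of the explicit auxiliary functions via Theorems \ref{coro v1}--\ref{coro v1d+1}, and evaluate $\int_{|x'|<R}|x'|^{2k}\,dx'/\delta(x')$ by the rescaling $r=(\varepsilon/\kappa)^{1/m}t$, with $k=0$ for $\alpha=1,2,3$ and $k=1$ (plus the symmetry factor $1/2$ for $\alpha=5,6$) for the rotational indices. Your identification of which entries of $\nabla u_1^{\alpha}$ are singular, and the resulting constants $\pi Q_{3,m}$, $\pi/(2\kappa)$, $\pi/(4\kappa)$, $\pi\widetilde Q_{3,m}$, $\pi\widetilde Q_{3,m}/2$, all match the statement.
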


\begin{prop}\label{prop converge B R3}
	Under the above assumptions, we have for sufficiently small $\varepsilon>0$, if $\alpha=1,2,3$,
	\begin{align*}
	b_{1}^{\alpha}[\varphi]-b_{1}^{*\alpha}[\varphi]
	&=
	\begin{cases}
	O(|\log\varepsilon|^{-1}),&\quad m=4,\\
	O(\varepsilon^{1/5}),&\quad m=5,\\
	O(\varepsilon^{1/3}),&\quad m\geq6,
	\end{cases}
	\end{align*}
	and $\alpha=4,5,6$,
	\begin{align*}
b_{1}^{\alpha}[\varphi]-b_{1}^{*\alpha}[\varphi]
	&=
	\begin{cases}
	O(|\log\varepsilon|^{-1}),&\quad m=4,\\
	O(\varepsilon^{1-\frac{4}{m}}),&\quad m=5,6,\\
	O(\varepsilon^{\frac{m+2}{3m}}),&\quad m\geq7,
	\end{cases}
	\end{align*}
	where $b_{1}^{\alpha}[\varphi]$ and $b_{1}^{*\alpha}[\varphi]$ are  defined in \eqref{def_bj},  $\alpha=1,2,3,4,5,6$.
\end{prop}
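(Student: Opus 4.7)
My plan is to follow the template of the proof of Proposition \ref{prop converge b} in Subsection \ref{subsec b}, now in dimension three with convexity order $m\geq 4$. Starting from the decomposition \eqref{difference bi beta}, I would split $b_1^\beta[\varphi]-b_1^{*\beta}[\varphi]$ into three contributions: a $v_0$--$v_0^*$ difference, a $C_2^\alpha$-weighted sum of $v^\alpha$--$v^{*\alpha}$ differences, and a term proportional to $C_2^\alpha-C_*^\alpha$. Boundary integration by parts (as in \eqref{difference v1} and the proof of Lemma \ref{lem valpha1}) converts the first two into boundary integrals of $\partial_\nu(v_1^\beta-v_1^{*\beta})$ on $\partial D$, which by the standard Agmon--Douglis--Nirenberg boundary estimates for the Lam\'e system reduces to pointwise control of $v_1^\beta-v_1^{*\beta}$ far from the origin.

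The main ingredient is therefore a higher-$m$ three-dimensional analog of Lemma \ref{lem difference v11}. I would reprove it using $\delta(x')\asymp\varepsilon+\kappa|x'|^m$ and $h_1(x')-h_2(x')\asymp\kappa|x'|^m$, recomputing the narrow-region derivative $|\partial_{x_d}(u_1^\alpha-u_1^{*\alpha})|$: for translations $\alpha=1,2,3$ the improved auxiliary function \eqref{auxiliary improved dim3} yields a bound of order $\varepsilon/((\varepsilon+|x'|^m)|x'|^m)$, whereas for rotations $\alpha=4,5,6$ the simpler auxiliary \eqref{auxi fun d+1} produces an additional $|x'|$ factor from $\psi_\alpha$. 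Combining these with Theorems \ref{coro v1} and \ref{coro v1d+1}, the mean value theorem on the tilted boundary pieces $\partial D_i^*\setminus(D_i\cup\mathcal{C}_{\varepsilon^\theta})$, and the maximum principle in $V\setminus\mathcal{C}_{\varepsilon^\theta}$, and then balancing the boundary-side exponent $\varepsilon^{1-m\theta}$ (translations) or $\varepsilon^{1-(m-1)\theta}$ (rotations) against the narrow-region propagation $\varepsilon^{m\theta}$, yields the required pointwise $L^\infty$ bounds on $v_1^\beta-v_1^{*\beta}$. The analogs of Lemmas \ref{es b1 b1* beta=1} and \ref{lem valpha1} then follow from the same boundary-gradient argument.

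The rate for $C_2^\alpha-C_*^\alpha$ is obtained as in Proposition \ref{conv C alpha}, combined with a higher-$m$ bound on $|C_1^\alpha-C_2^\alpha|$: the linear system \eqref{C1C2_2} together with the $a_{11}^{\alpha\alpha}$ asymptotics from Proposition \ref{prop a1133 R3} furnishes $|C_1^\alpha-C_2^\alpha|\leq C/a_{11}^{\alpha\alpha}$, while the half-sum reduction of \eqref{equ-decompositon} controls $|(C_1^\alpha+C_2^\alpha)/2-C_*^\alpha|$ via the right-hand-side smallness from the previous step and the boundedness of $a^{\alpha\alpha}$ from Theorem \ref{auxi thm}. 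Substituting both into \eqref{difference bi beta} completes the proof. The main obstacle will be the case analysis in the Lemma \ref{lem difference v11} analog: the optimal transition radius $\varepsilon^\theta$ shifts with both $m$ and the distinction $\alpha\leq d$ versus $\alpha\geq d+1$, the logarithmic borderline $m=4$ requires extra care at $|x'|^m\sim\varepsilon$, and the thresholds $m=5,6,7$ arise from comparing the competing $L^\infty$ rate, the $a_{11}^{\alpha\alpha}$ asymptotics, and the $|C_1^\alpha-C_2^\alpha|$ bounds, producing the piecewise expressions in the statement.
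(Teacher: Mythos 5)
Your outline reproduces the argument the paper itself intends here: Proposition \ref{prop converge B R3} is stated without proof, to be obtained ``by a similar argument'' to Propositions \ref{prop a11} and \ref{prop converge b}, and your plan --- the decomposition \eqref{difference bi beta}, a higher-$m$ three-dimensional analog of Lemma \ref{lem difference v11} with the $\varepsilon^{\theta}$-balancing on $\partial(V\setminus\mathcal{C}_{\varepsilon^{\theta}})$, the analogs of Lemmas \ref{es b1 b1* beta=1} and \ref{lem valpha1} via integration by parts and boundary estimates, and the input $|C_1^{\alpha}-C_2^{\alpha}|\lesssim 1/a_{11}^{\alpha\alpha}$ from Proposition \ref{prop a1133 R3} together with the half-sum reduction --- is precisely that argument. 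Your optimized balancing in fact yields $L^{\infty}$ rates ($\varepsilon^{1/2}$ for the translational indices and $\varepsilon^{m/(2m-1)}$ for the rotational ones) that are at least as strong as the exponents $\varepsilon^{(m+2)/(3m)}$ appearing in the statement, so the claimed piecewise rates follow once combined with the $\varepsilon^{1-4/m}$ (resp. $|\log\varepsilon|^{-1}$) contribution of $C_2^{\alpha}-C_*^{\alpha}$ for $\alpha=4,5,6$.
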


Finally, we close this section by giving the proof of Theorem \ref{thmhigher2} (ii).

\begin{proof}[{\bf Proof of Theorem \ref{thmhigher2} (ii)}]
	For any $x\in\Omega_{R}$, by using  \eqref{nablau_dec},  Theorems \ref{auxi thm} and \ref{coro v1}, we have
	\begin{align}\label{nablau d=3}
	\nabla u=\sum_{\alpha=1}^{6}\left(C_{1}^{\alpha}-C_{2}^{\alpha}\right)
	\nabla u_{1}^{\alpha}+O(1),\quad m\geq3.
	\end{align}
By \eqref{nablau_dec},  Propostions  \ref{prop a1133 R3} and \ref{prop converge B R3}, we give the proof only for $m=4$. The other cases are similar. When $m=4$,
	\begin{align*}
	C_1^\alpha-C_2^\alpha
	&=
	\frac{\sqrt{\kappa}\sqrt{\varepsilon}}{\pi\mu Q_{3,4}}b_{1}^{*\alpha}[\varphi]\left(1+O(|\log\varepsilon|^{-1})\right),\quad\alpha=1,2,\\
	C_1^3-C_2^3
	&=
	\frac{\sqrt{\kappa}\sqrt{\varepsilon}}{\pi(\lambda+2\mu) Q_{3,4}}b_{1}^{*3}[\varphi]\left(1+O(|\log\varepsilon|^{-1})\right),\\
	C_1^4-C_2^4
	&=
	\frac{2\kappa}{\pi\mu|\log\varepsilon|}b_{1}^{*4}[\varphi]\left(1+O(|\log\varepsilon|^{-1})\right),
	\end{align*}
	and for $\alpha=5,6$,
	\begin{align*}
	C_1^\alpha-C_2^\alpha
	&=
	\frac{4\kappa}{\pi(\lambda+2\mu)|\log\varepsilon|}b_{1}^{*\alpha}[\varphi]\left(1+O(|\log\varepsilon|^{-1})\right).
	\end{align*}	
Substituting the above terms into \eqref{nablau d=3}, for $x\in\Omega_{R}$,
\begin{align*}
\nabla u
&=\sum_{\alpha=1}^{6}(C_{1}^{\alpha}-C_{2}^{\alpha})\nabla u_{1}^{\alpha}+O(1)\\
&=\frac{\sqrt{\kappa}\sqrt{\varepsilon}}{\pi Q_{3,4}}\left(\sum_{\alpha=1}^{2}\frac{1}{\mu}b_{1}^{*\alpha}[\varphi]\nabla u_{1}^{\alpha}+\frac{1}{\lambda+2\mu}b_{1}^{*3}[\varphi]\nabla u_{1}^{3}\right)\left(1+O(|\log\varepsilon|^{-1})\right)\\
&\quad+\frac{2\kappa}{\pi|\log\varepsilon|}\left(\frac{1}{\mu}b_{1}^{*4}[\varphi]\nabla u_{1}^{4}+\sum_{\alpha=5}^{6}\frac{1}{\lambda+2\mu}b_{1}^{*\alpha}[\varphi]\nabla u_{1}^{\alpha}\right)\left(1+O(|\log\varepsilon|^{-1})\right)+O(1).
\end{align*}
The proof is finished.
\end{proof}

\section{Application: an extended Flaherty-Keller formula}\label{application}
As an application of the asymptotic expressions in Propositions \ref{prop a11} and \ref{prop a1133}, we prove an extended Flaherty-Keller formula on the effective elastic property of a periodic composite with densely packed inclusions. We are going to follow the setting in \cite{fk,ky,ll} other than the symmetry conditions. Specifically we denote the period cell by $Y:=(-L_{1},L_{1})\times (-L_{2},L_{2})$, where $L_{1}$ and $L_{2}$ are two positive numbers. Let $D\subset Y$ be a $m$-convex domain containing the origin with $C^{2}$ boundary. Assume that $D$ is close to the horizontal boundary of $Y$ and away from the vertical boundary. Let $\varepsilon/2$ be the distance between $D$ and the the horizontal boundary of $Y$, so that $\varepsilon$ becomes the distance between two adjacent inclusions, see Figure \ref{inclusions}.
\begin{figure}[t]
	\begin{minipage}[c]{0.9\linewidth}
		\centering
		\includegraphics[width=2.5in]{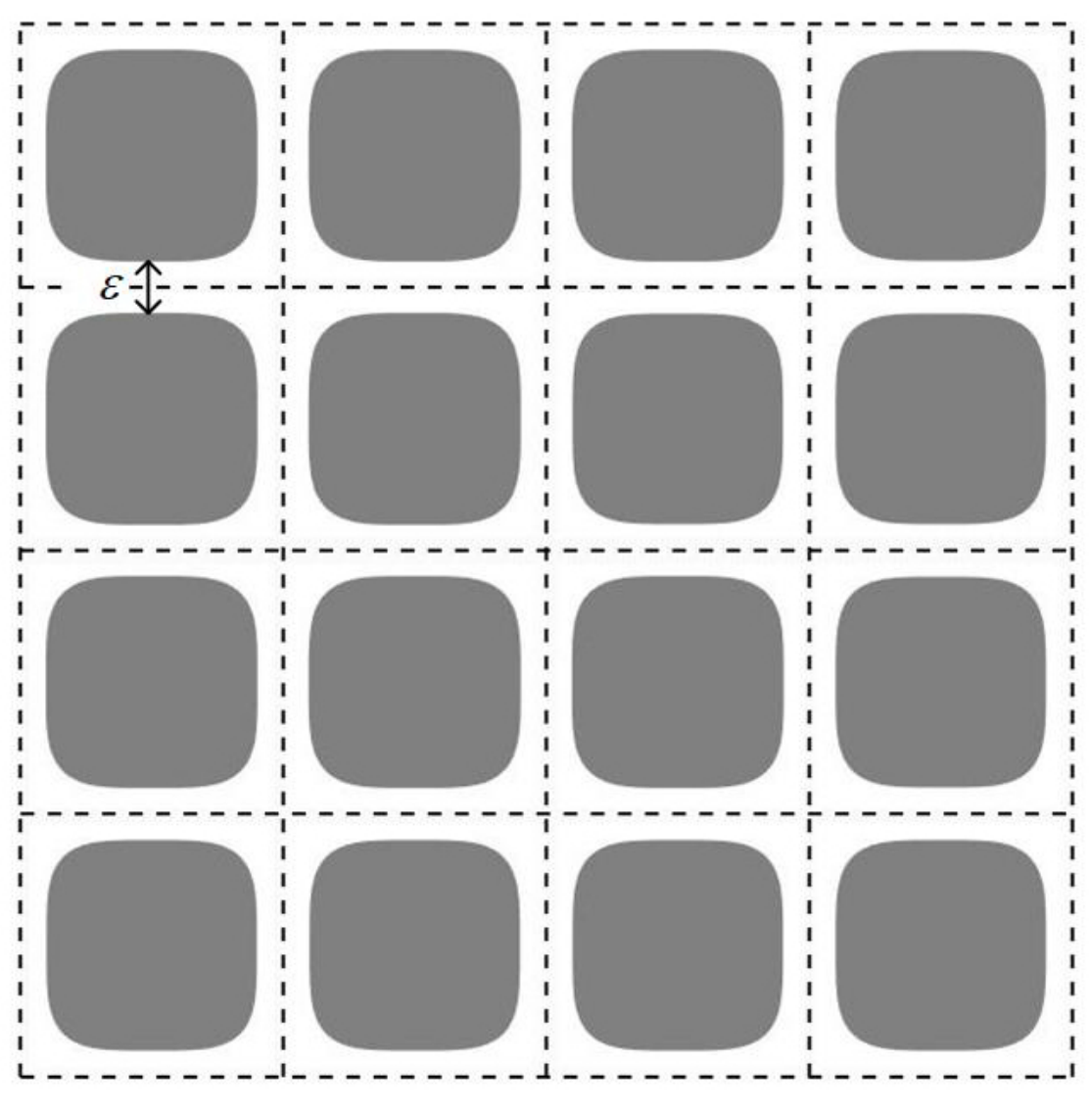}
		\caption{\small $m$-convex inclusions (say, $x^{4}+y^{4}=1$).}
		\label{inclusions}
	\end{minipage}
\end{figure}

As in \cite{ll}, after translation, we denote 
\begin{align*}
Y_{t}:=Y+(0,L_{2})=(-L_{1},L_{1})\times(0,2L_{2}),
\end{align*}
\begin{align*}
D_{1}:=D+(0,2L_{2}+\varepsilon/2),\quad D_{2}:=D+(0,\varepsilon/2),\quad Y':=Y_{t}\setminus\overline{D_{1}\cup D_{2}},
\end{align*}
and set
\begin{align*}
\Gamma_{+}:=(\partial D_{1}\cup\{x_{2}=2L_{2}+\varepsilon/2\})\cap\partial Y',\quad \Gamma_{-}:=(\partial D_{2}\cup\{x_{2}=\varepsilon/2\})\cap\partial Y'.
\end{align*}
Then we obtain the effective shear modulus $\mu_{m}^{*}$ and the effective extensional modulus $E_{m}^{*}$ defined by
\begin{align*}
\mu_{m}^{*}=\frac{L_{2}}{L_{1}}\mathcal{E}_{1}^{1},\quad E_{m}^{*}=\frac{E}{\lambda+2\mu}\frac{L_{2}}{L_{1}}\mathcal{E}_{1}^{2},
\end{align*}
where $E=\frac{\mu(3\lambda+2\mu)}{\lambda+\mu}$, and
\begin{align*}
\mathcal{E}_{1}^{\alpha}=\int_{Y'}(\mathbb{C}^0e(v_{1}^{\alpha}), e(v_{1}^{\alpha})\ dx,\quad \alpha=1,2,
\end{align*}
$v_{1}^{\alpha}\in H^{1}(Y')$ is the solution to
\begin{align*}
\begin{cases}
\mathcal{L}_{\lambda, \mu}v_{1}^{\alpha}:=\nabla\cdot(\mathbb{C}^0e(v_{1}^{\alpha}))=0,\quad&\hbox{in}\ Y',\\
v_{1}^{\alpha}=\psi_{\alpha},&\hbox{on}~\Gamma_{+},\\
v_{1}^{\alpha}=0,&\hbox{on}~\Gamma_{-},\\
\frac{\partial v_{1}^{\alpha}}{\partial \nu}\Big|_{+}=0,&\hbox{on}~x_{1}=\pm L_{1}.
\end{cases}
\end{align*}

Note that the definition of $\mathcal{E}_{1}^{\alpha}$ is similar to that of $a_{11}^{\alpha\alpha}$ in \eqref{def_aij}. Then by using Proposition \ref{prop a11} and Proposition \ref{prop a1133}, we have
\begin{theorem}\label{FK formula}
	Given $m\geq2$. As $\varepsilon\rightarrow0$, 
	\begin{align*}
	\mu_{m}^{*}=\mu\frac{L_{2}}{L_{1}}\frac{ Q_{2,m}}{\kappa^{1/m}\varepsilon^{1-1/m}}+O(1),\quad\mbox{and}\quad E_{m}^{*}=E\frac{L_{2}}{L_{1}}\frac{Q_{2,m}}{\kappa^{1/m}\varepsilon^{1-1/m}}+O(1),
	\end{align*}
	where $\kappa$ is the curvature of $\partial D$ near the origin, and 
	$$Q_{2,m}=2\int_{0}^{\infty}\frac{1}{1+t^{m}}\ dt.$$
\end{theorem}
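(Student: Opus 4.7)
\medskip
\noindent\emph{Proof proposal.} The plan is to reduce the computation of $\mathcal{E}_1^{\alpha}$ ($\alpha = 1, 2$) to the asymptotic analysis already carried out for $a_{11}^{\alpha\alpha}$ in Propositions \ref{prop a11} and \ref{prop a1133}. Observe that the two adjacent inclusions $D_1 = D + (0, 2L_2 + \varepsilon/2)$ and $D_2 = D + (0, \varepsilon/2)$ are $\varepsilon$-apart and, near their closest pair of points, have graphs of the form $x_2 = \pm \varepsilon/2 \pm \tfrac{\kappa}{2}|x_1|^m + O(|x_1|^{m+1})$, exactly as in the assumption \eqref{convexity} and \eqref{h1h3}. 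Consequently the narrow-region geometry coincides with the one treated in Section \ref{sec pf prop a11 b}, and only the side boundary conditions differ (here, periodic/Neumann on $x_1 = \pm L_1$ rather than Dirichlet on $\partial D$).

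I would split
\begin{equation*}
\mathcal{E}_1^{\alpha} = \int_{\Omega_R} (\mathbb{C}^0 e(v_1^{\alpha}), e(v_1^{\alpha}))\, dx + \int_{Y' \setminus \Omega_R} (\mathbb{C}^0 e(v_1^{\alpha}), e(v_1^{\alpha}))\, dx,
\end{equation*}
where $\Omega_R$ is the narrow neck of horizontal radius $R$ between $D_1$ and $D_2$. Since $Y' \setminus \Omega_R$ is bounded away from the singular set and carries a smooth flat Neumann portion on $\{x_1 = \pm L_1\}$, standard interior/boundary elliptic estimates for the Lam\'e system give $\|\nabla v_1^{\alpha}\|_{L^{\infty}(Y' \setminus \Omega_R)} \leq C$, so the second integral is $O(1)$.

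For the narrow-region integral, I would introduce the same vector-valued auxiliary functions $u_1^{\alpha}$ as in \eqref{auxiliary improved}, using the profile $\bar{u}(x) = (x_2 - h_2(x_1) + \varepsilon/2)/\delta(x_1)$ with $\delta(x_1) = \varepsilon + \kappa|x_1|^m + O(|x_1|^{m+1})$. The key pointwise identity $\nabla v_1^{\alpha} = \nabla u_1^{\alpha} + O(1)$ in $\Omega_R$ is an essentially local statement whose proof — the iteration-in-energy argument outlined after Theorem \ref{coro v1} — uses only the equation $\mathcal{L}_{\lambda,\mu} v_1^\alpha = 0$ together with the Dirichlet data $v_1^\alpha = \psi_\alpha$ on the upper neck boundary and $v_1^\alpha = 0$ on the lower neck boundary; it does not see the far-field outer boundary condition, and so transfers unchanged to the present periodic cell. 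Plugging this into the energy integral, one reduces, modulo lower-order terms, to the same scalar integrals computed in the proof of Propositions \ref{prop a11} and \ref{prop a1133}: using \eqref{formula Eu} and \eqref{cal auxiliary} (resp.\ \eqref{cal auxiliary22}),
\begin{equation*}
\int_{\Omega_R} (\mathbb{C}^0 e(u_1^{\alpha}), e(u_1^{\alpha}))\, dx = c_\alpha \int_{|x_1| < R} \frac{dx_1}{\varepsilon + \kappa |x_1|^m} + O(1),
\end{equation*}
with $c_1 = \mu$ and $c_2 = \lambda + 2\mu$. The scaling $x_1 = (\varepsilon/\kappa)^{1/m} t$ converts the right-hand integral into $\frac{Q_{2,m}}{\kappa^{1/m}\varepsilon^{1-1/m}} + O(1)$, convergent precisely because $m \geq 2$.

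Combining these steps gives $\mathcal{E}_1^{1} = \mu Q_{2,m}/(\kappa^{1/m}\varepsilon^{1-1/m}) + O(1)$ and $\mathcal{E}_1^{2} = (\lambda+2\mu) Q_{2,m}/(\kappa^{1/m}\varepsilon^{1-1/m}) + O(1)$, from which the stated formulas for $\mu_m^*$ and $E_m^*$ follow by the definitions $\mu_m^* = (L_2/L_1)\mathcal{E}_1^1$ and $E_m^* = \frac{E}{\lambda+2\mu}\frac{L_2}{L_1}\mathcal{E}_1^2$. The main technical obstacle is verifying that the auxiliary function $u_1^\alpha$, which in Section \ref{sec problems} was defined on a domain with Dirichlet outer boundary, can be extended to $Y'$ in a manner compatible with the Neumann condition on $\{x_1 = \pm L_1\}$ while preserving $\|u_1^\alpha\|_{C^2(Y' \setminus \Omega_R)} \leq C$; this is achieved by multiplying $\tilde{u}_1^\alpha$ by a cutoff supported in $\Omega_{3R/2}$, after which the remainder $v_1^\alpha - u_1^\alpha$ solves an equation with bounded right-hand side and compatible side conditions, and the iteration argument proceeds as in the Appendix.
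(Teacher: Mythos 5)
Your proposal is correct and follows essentially the same route as the paper, which simply observes that $\mathcal{E}_1^{\alpha}$ is the analogue of $a_{11}^{\alpha\alpha}$ and invokes Propositions \ref{prop a11} and \ref{prop a1133}. You in fact supply more detail than the paper does on the only delicate point — that the neck asymptotics $\nabla v_1^\alpha=\nabla u_1^\alpha+O(1)$ and the leading integral are insensitive to replacing the Dirichlet outer boundary by the Neumann/periodic lateral conditions — and your computation of the leading constant $\mu Q_{2,m}/(\kappa^{1/m}\varepsilon^{1-1/m})$ matches the paper's.
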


Clearly, by a direct calculation, we find that Theorem \ref{FK formula} for $m=2$ is actually the result in \cite{ky}. Furthermore, we would like to remark that compared with \cite{ky}, our method can do not need to assume that $D_{1}$ and $D_{2}$ are symmetric.

\section{Appendix: the proof of Theorem \ref{coro v1} and Theorem \ref{coro v1d+1}}

We here give the proof of Theorem \ref{coro v1} and Theorem \ref{coro v1d+1}. The key point is that $|\mathcal{L}_{\lambda, \mu}u_{1}^{\alpha}|$ is improved to be controled by $\frac{C}{\delta(x')}$. This is due to the introduction of $\tilde{u}_{1}^{\alpha}$. Then we adapt the iteration technique first used  in \cite{llby} and further developed  in \cite{bll1,bll2}, to capture all singular terms of $\nabla v_{1}^{\alpha}$ and to obtain the asymptotic formulas.

\begin{proof}[Proof of Theorems \ref{coro v1} and \ref{coro v1d+1}.]
	
{\bf Step 1. Claim:} 
\begin{equation}\label{auxi es claim}
\begin{split}
|\mathcal{L}_{\lambda, \mu}u_{1}^{\alpha}|&\leq C
\begin{cases}
\frac{1}{\delta(x')}+|x_{1}|^{\gamma-1},&~m=2,\\
\frac{|x'|^{m-2}}{\delta(x')}\left(1+\frac{\varepsilon}{|x'|}\right),&~m\geq3,
\end{cases}
\quad\alpha=1,\cdots,d;\\
|\mathcal{L}_{\lambda, \mu}u_{1}^{\alpha}|&\leq\frac{C}{\delta(x')},\quad\alpha=d+1,\cdots,\frac{d(d+1)}{2}.
\end{split}
\end{equation}
We will prove the claim in the light of the following two cases.

{\bf Case 1. $d=2$.} A direct calculation yields in $\Omega_{R}$,
\begin{equation}\label{est auxi}
\Big|\partial_{x_{1}x_{1}}(\bar{u}_{1}^{1})^{1}\Big|,~\Big|\partial_{x_{1}x_{2}}(\tilde{u}_{1}^{1})^{2}\Big|\leq\frac{C|x_{1}|^{m-2}}{\delta(x_{1})},\quad
\Big|\partial_{x_{1}x_{1}}(\tilde{u}_{1}^{1})^{2}\Big|\leq C
\begin{cases}
\frac{|x_{1}|}{\delta(x_{1})}+|x_{1}|^{\gamma-1},&~m=2,\\
|x_{1}|^{m-3},&~m\geq3;
\end{cases}
\end{equation}
and
\begin{align}\label{cal auxi}
\partial_{x_{1}x_{2}}(\bar{u}_{1}^{1})^{1}=-\frac{\partial_{x_{1}}(h_{1}-h_{2})}{\delta^{2}(x_{1})},\quad\partial_{x_{2}x_{2}}(\tilde{u}_{1}^{1})^{2}=\frac{\lambda+\mu}{\lambda+2\mu}\frac{\partial_{x_{1}}(h_{1}-h_{2})}{\delta^{2}(x_{1})}.
\end{align}
By using \eqref{est auxi}, we have
\begin{align}\label{est L u11}
&\Big|(\mathcal{L}_{\lambda, \mu}u_{1}^{1})^{1}\Big|\nonumber\\
&=\lambda\Big(\partial_{x_{1}x_{1}}(u_{1}^{1})^{1}+\partial_{x_{2}x_{1}}(u_{1}^{1})^{2}\Big)+\mu\Big(2\partial_{x_{1}x_{1}}(u_{1}^{1})^{1}+\partial_{x_{2}x_{2}}(u_{1}^{1})^{1}+\partial_{x_{1}x_{2}}(u_{1}^{1})^{2}\Big)\nonumber\\
&=\Big|\mu\Delta (u_{1}^{1})^{1}+(\lambda+\mu)\big(\partial_{x_{1}x_{1}}(u_{1}^{1})^{1}+\partial_{x_{1}x_{2}}(u_{1}^{1})^{2}\big)\Big|\nonumber\\
&=\Big|\mu\Delta (\bar{u}_{1}^{1})^{1}+(\lambda+\mu)\big(\partial_{x_{1}x_{1}}(\bar{u}_{1}^{1})^{1}+\partial_{x_{1}x_{2}}(\tilde{u}_{1}^{1})^{2}\big)\Big|\leq\frac{C|x_{1}|^{m-2}}{\delta(x_{1})}.
\end{align}
By using \eqref{cal auxi}, we get
\begin{align*}
(\lambda+\mu)\partial_{x_{1}x_{2}}(\bar{u}_{1}^{1})^{1}+(\lambda+2\mu)\partial_{x_{2}x_{2}}(\tilde{u}_{1}^{1})^{2}=0,
\end{align*}
which means that the ``bad'' terms in \eqref{cal auxi} are eliminated. Combining this and \eqref{est auxi}, we obtain
\begin{align}\label{est L u12}
\Big|(\mathcal{L}_{\lambda, \mu}u_{1}^{1})^{2}\Big|&=\Big|(\lambda+\mu)\big(\partial_{x_{1}x_{2}}(u_{1}^{1})^{1}+\partial_{x_{2}x_{2}}(u_{1}^{1})^{2}\big)+\mu\Delta u_{1}^{2}\Big|\nonumber\\
&=\Big|\mu\partial_{x_{1}x_{1}}(\tilde{u}_{1}^{1})^{2}\Big|\leq C
\begin{cases}
\frac{|x_{1}|}{\delta(x_{1})}+|x_{1}|^{\gamma-1},&~m=2,\\
|x_{1}|^{m-3},&~m\geq3.
\end{cases}
\end{align}
We henceforth obtain from \eqref{est L u11} and \eqref{est L u12} that
\begin{align*}
\Big|\mathcal{L}_{\lambda, \mu}u_{1}^{1}\Big|\leq C
\begin{cases}
\frac{1}{\delta(x_{1})}+|x_{1}|^{\gamma-1},&~m=2,\\
\frac{|x_{1}|^{m-2}}{\delta(x_{1})}\left(1+\frac{\varepsilon}{|x_{1}|}\right),&~m\geq3.
\end{cases}
\end{align*}
Similarly, we have
\begin{align*}
\Big|\mathcal{L}_{\lambda, \mu}u_{1}^{2}\Big|\leq C
\begin{cases}
\frac{1}{\delta(x_{1})}+|x_{1}|^{\gamma-1},&~m=2,\\
\frac{|x_{1}|^{m-2}}{\delta(x_{1})}\left(1+\frac{\varepsilon}{|x_{1}|}\right),&~m\geq3.
\end{cases}
\end{align*}
Furthermore, we have
\begin{align*}
\Big|\partial_{x_{1}x_{1}}u_{1}^{3}\Big|\leq\frac{C|x_{1}|^{m-1}}{\delta(x_{1})},~\Big|\partial_{x_{1}x_{2}}u_{1}^{3}\Big|,\Big|\partial_{x_{2}x_{2}}u_{1}^{3}\Big|\leq\frac{C}{\delta(x_{1})}.
\end{align*}
Then we obtain
\begin{align*}
\Big|\mathcal{L}_{\lambda, \mu}u_{1}^{3}\Big|\leq\frac{C}{\delta(x_{1})}.
\end{align*}

{\bf Case 2. $d=3$.} We have in $\Omega_{R}$,
\begin{equation}\label{est dimen3}
\begin{split}
&\Big|\partial_{x_{1}x_{1}}(\bar{u}_{1}^1)^{1}\Big|,~\Big|\partial_{x_{2}x_{2}}(\bar{u}_{1}^{1})^{1}\Big|,~\Big|\partial_{x_{1}x_{2}}(\bar{u}_{1}^{1})^{1}\Big|,~\Big|\partial_{x_{1}x_{3}}(\tilde{u}_{1}^{1})^{3}\Big|,~\Big|\partial_{x_{2}x_{3}}(\tilde{u}_{1}^{1})^{3}\Big|\leq\frac{C|x'|^{m-2}}{\delta(x')},\\
&\Big|\partial_{x_{1}x_{1}}(\tilde{u}_{1}^{1})^{3}\Big|,~\Big|\partial_{x_{2}x_{2}}(\tilde{u}_{1}^{1})^{3}\Big|\leq C
\begin{cases}
\frac{|x'|}{\delta(x')}+1,&~m=2,\\
|x'|^{m-3},&~m\geq3;
\end{cases}
\end{split}
\end{equation}
and
\begin{align}\label{est dime3 bad}
\partial_{x_{3}x_{1}}(\bar{u}_{1}^{1})^{1}=-\frac{\partial_{x_{1}}(h_{1}-h_{2})}{\delta(x')^{2}},\quad\partial_{x_{3}x_{3}}(\tilde{u}_{1}^{1})^{3}=\frac{\lambda+\mu}{\lambda+2\mu}\frac{\partial_{x_{1}}(h_{1}-h_{2})}{\delta(x')^{2}}.
\end{align}
By \eqref{est dimen3}, we have
\begin{align}\label{est dim3 u11}
&\Big|(\mathcal{L}_{\lambda, \mu}u_{1}^{1})^{1}\Big|\nonumber\\
&=\Big|\lambda\big(\partial_{x_{1}x_{1}}(u_{1}^{1})^{1}+\partial_{x_{3}x_{1}}(u_{1}^{1})^{3}\big) +\mu\big(2\partial_{x_{1}x_{1}}(u_{1}^{1})^{1}+\partial_{x_{2}x_{2}}(u_{1}^{1})^{1}+\partial_{x_{1}x_{3}}(u_{1}^{1})^{3}\big)\Big|\nonumber\\
&\leq\frac{C|x'|^{m-2}}{\delta(x')},
\end{align}
and
\begin{align}\label{est dim3 u12}
\Big|(\mathcal{L}_{\lambda, \mu}u_{1}^{1})^{2}\Big|&=\Big|\lambda\big(\partial_{x_{1}x_{2}}(u_{1}^{1})^{1}+\partial_{x_{3}x_{2}}(u_{1}^{1})^{3}\big) +\mu\big(\partial_{x_{2}x_{1}}(u_{1}^{1})^{1}+\partial_{x_{2}x_{3}}(u_{1}^{1})^{3}\big)\Big|\nonumber\\
&\leq\frac{C|x'|^{m-2}}{\delta(x')}.
\end{align}
By \eqref{est dime3 bad}, we obtain
\begin{align}\label{sum=0}
(\lambda+\mu)\partial_{x_{3}x_{1}}(\bar{u}_{1}^{1})^{1}+(\lambda+2\mu)\partial_{x_{3}x_{3}}(\tilde{u}_{1}^{1})^{3}=0.
\end{align}
Then \eqref{est dimen3} and \eqref{sum=0} imply that
\begin{align}\label{est dim3 u13}
\Big|(\mathcal{L}_{\lambda, \mu}u_{1}^{1})^{3}\Big|&=\Big|\lambda\big(\partial_{x_{1}x_{3}}(u_{1}^{1})^{1}+\partial_{x_{3}x_{3}}(u_{1}^{1})^{3}\big)\nonumber\\ &\quad+\mu\big(\partial_{x_{1}x_{1}}(u_{1}^{1})^{3}+\partial_{x_{3}x_{1}}(u_{1}^{1})^{1}+\partial_{x_{2}x_{2}}(u_{1}^{1})^{3}+2\partial_{x_{3}x_{3}}(u_{1}^{1})^{3}\big)\Big|\nonumber\\
&=\Big|\mu\big(\partial_{x_{1}x_{1}}(\tilde{u}_{1}^{1})^{3}+\partial_{x_{2}x_{2}}(\tilde{u}_{1}^{1})^{3}\big)\Big|\leq C
\begin{cases}
\frac{|x'|}{\delta(x')}+1,&~m=2,\\
|x'|^{m-3},&~m\geq3.
\end{cases}
\end{align}
Hence, \eqref{est dim3 u11}, \eqref{est dim3 u12}, and \eqref{est dim3 u13} give
\begin{align*}
\Big|\mathcal{L}_{\lambda, \mu}u_{1}^{1}\Big|\leq C
\begin{cases}
\frac{1}{\delta(x')},&~m=2,\\
\frac{|x'|^{m-2}}{\delta(x')}\left(1+\frac{\varepsilon}{|x'|}\right),&~m\geq3.
\end{cases}
\end{align*}
Similarly, we can get
\begin{align*}
\Big|\mathcal{L}_{\lambda, \mu}u_{1}^{2}\Big|, ~\Big|\mathcal{L}_{\lambda, \mu}u_{1}^{3}\Big|\leq C
\begin{cases}
\frac{1}{\delta(x')},&~m=2,\\
\frac{|x'|^{m-2}}{\delta(x')}\left(1+\frac{\varepsilon}{|x'|}\right),&~m\geq3.
\end{cases}
\end{align*}
For the corresponding estimates for $u_{1}^{\alpha}$, $\alpha=4,5,6$, we note that
\begin{align*}
\Big|\partial_{x_{k}x_{l}}u_{1}^{\alpha}\Big|\leq\frac{C|x'|^{m-1}}{\delta(x')},~\Big|\partial_{x_{k}x_{3}}u_{1}^{\alpha}\Big|,\Big|\partial_{x_{3}x_{3}}u_{1}^{\alpha}\Big|\leq\frac{C}{\delta(x')},\quad k,l=1,2.
\end{align*}
We thus obtain
\begin{align*}
\Big|\mathcal{L}_{\lambda, \mu}u_{1}^{\alpha}\Big|\leq\frac{C}{\delta(x')},\quad\alpha=4,5,6.
\end{align*}
Therefore, \eqref{auxi es claim} is proved.

{\bf Step 2. The proof of the boundedness of the global energy.}
We obtain from \cite{bll1} that the global energy of $\nabla (v_{1}^{\alpha}-\bar{u}_{1}^{\alpha})$ are bounded, $\alpha=1,\cdots,d(d+1)/2$. Moreover, $\int_{\Omega\setminus\Omega_{R}}|\nabla\tilde{u}_{1}^{\alpha}|^2$ are also bounded because of $\|u_{1}^{\alpha}\|_{C^{2}(\Omega\setminus\Omega_{R})}\leq C$. So it suffices to prove the bundedness of $\int_{\Omega_{R}}|\nabla\tilde{u}_{1}^{\alpha}|^2$, $\alpha=1,\cdots,d$, since $\tilde{u}_{1}^{\alpha}=0$, $\alpha=d+1,\cdots,d(d+1)/2$. 

When $d=2$. Recalling the definition of $\tilde{u}_{1}^{\alpha}$ in \eqref{auxiliary improved}, we have
\begin{align*}
\nabla(\tilde{u}_{1}^{1})^{1}=0,\quad\Big|\partial_{x_{1}}(\tilde{u}_{1}^{1})^{2}\Big|\leq C,\quad \Big|\partial_{x_{2}}(\tilde{u}_{1}^{1})^{2}\Big|\leq\frac{C|x_{1}|^{m-1}}{\delta(x_{1})};
\end{align*}
and
\begin{align*}
\nabla(\tilde{u}_{1}^{2})^{2}=0,\quad\Big|\partial_{x_{1}}(\tilde{u}_{1}^{2})^{1}\Big|\leq C,\quad \Big|\partial_{x_{2}}(\tilde{u}_{1}^{2})^{1}\Big|\leq\frac{C|x_{1}|^{m-1}}{\delta(x_{1})}.
\end{align*}
Hence,
\begin{align*}
\int_{\Omega_{R}}|\nabla\tilde{u}_{1}^{\alpha}|^2\ dx\leq\int_{\Omega_{R}}\frac{C|x_{1}|^{2(m-1)}}{\delta^{2}(x_{1})}\ dx
\leq C\int_{|x_{1}|<R}\frac{|x_{1}|^{2(m-1)}}{\delta(x_{1})}\ dx_{1}\leq C.
\end{align*}

When $d=3$. We obtain from \eqref{auxiliary improved dim3} that
\begin{align*}
\nabla(\tilde{u}_{1}^{\alpha})^{\beta}=0,\quad\Big|\nabla_{x'}(\tilde{u}_{1}^{\alpha})^{3}\Big|\leq C,\quad \Big|\partial_{x_{3}}(\tilde{u}_{1}^{\alpha})^{3}\Big|\leq\frac{C|x'|^{m-1}}{\delta(x')},\quad\alpha,\beta=1,2;
\end{align*}
and
\begin{align*}
\Big|\nabla_{x'}(\tilde{u}_{1}^{3})^{\beta}\Big|\leq C,\quad \Big|\partial_{x_{3}}(\tilde{u}_{1}^{3})^{\beta}\Big|\leq\frac{C|x'|^{m-1}}{\delta(x')},\quad\nabla(\tilde{u}_{1}^{3})^{3}=0,\quad\beta=1,2.
\end{align*}
Thus, 
\begin{align*}
\int_{\Omega_{R}}|\nabla\tilde{u}_{1}^{\alpha}|^2\ dx\leq\int_{\Omega_{R}}\frac{C|x'|^{2(m-1)}}{\delta^{2}(x')}\ dx
\leq C\int_{|x'|<R}\frac{|x'|^{2(m-1)}}{\delta(x')}\ dx'\leq C.
\end{align*}
Therefore, the boundedness of the global energy of $\nabla(v_{1}^{\alpha}-u_{1}^{\alpha})$ is established.

{\bf Step 3. Proof of}  
\begin{align}\label{local bound}
\int_{\Omega_{\delta}(z')}|\nabla w_{1}^{\alpha}|^{2}\ dx\leq C\delta^{d}(z')
\begin{cases}
\delta^{2(1-\frac{2}{m})}(z'),&\quad\alpha=1,\cdots,d,\\
1,&\quad\alpha=(d+1),\cdots,d(d+1)/2.
\end{cases}
\end{align}
where
$$\Omega_{s}(z'):=\left\{(x',x_{d})\in \mathbb{R}^{d}~\big|~h_{2}(x')<x_{d}<\varepsilon+h_{1}(x'),~|x'-z'|<r\right\},\quad s<R,$$
and $w_{1}^{\alpha}=v_{1}^{\alpha}-u_1^{\alpha}$, $\alpha=1,\cdots,d(d+1)/2$, satisfying
\begin{align}\label{equation w}
\begin{cases}
\mathcal{L}_{\lambda,\mu}w_{1}^{\alpha}=-\mathcal{L}_{\lambda,\mu}u_{1}^{\alpha},&
\hbox{in}\  \Omega,  \\
w=0, \quad&\hbox{on} \ \partial\Omega.
\end{cases}
\end{align}

We will use the iteration scheme developed in  \cite{bll1, bll2, llby} to prove \eqref{local bound}. For $0<t<s<R$, let $\eta$ be a smooth cutoff function satisfying $\eta(x')=1$ if $|x'-z'|<t$, $\eta(x')=0$ if $|x'-z'|>s$, $0\leq\eta(x')\leq1$ if $t\leq|x'-z'|\leq\,s$, and $|\nabla_{x'}\eta(x')|\leq\frac{2}{s-t}$. Multiplying the equation in (\ref{equation w}) by $w_{1}^{\alpha}\eta^2$ and applying integration by parts, H\"older's inequality, and Cauchy inequality, we get
\begin{align}\label{integrationbypart}
\int_{\Omega_t(z')}|\nabla w_{1}^{\alpha}|^2\ dx\leq\frac{C}{(s-t)^2}\int_{\Omega_{s}(z')}|w_{1}^{\alpha}|^2\ dx +C(s-t)^2\int_{\Omega_{s}(z')}|\mathcal{L}_{\lambda,\mu}u_{1}^{\alpha}|^2\ dx.
\end{align}
On one hand, we obtain from H\"{o}lder's inequality that
\begin{align}\label{w}
\int_{\Omega_{s}(z')}|w_{1}^{\alpha}|^2\ dx=\int_{\Omega_{s}(z')}\left|\int_{h_2(x')}^{x_{d}}\partial_{x_{d}}w_{1}^{\alpha}(x', \xi)\ d\xi\right|^2\ dx\leq\,C\delta^{2}(z')\int_{\Omega_{s}(z')}|\nabla w_{1}^{\alpha}|^2\ dx.
\end{align}
On the other hand, we estimate the second term on the right hand side of \eqref{integrationbypart} according to the following two cases.

{\bf Case 1. $|z'|\leq\varepsilon^{1/m}$.} By using \eqref{auxi es claim}, we have for $0<s<\varepsilon^{1/m}$, 
\begin{align}\label{local est u}
\int_{\Omega_s(z')}|\mathcal{L}_{\lambda,\mu}u_{1}^{\alpha}|^{2}\ dx\leq C s^{d-1}
\begin{cases}
\varepsilon^{\frac{2(m-2)}{m}-1},&\quad\alpha=1,\cdots,d,\\
\varepsilon^{-1},&\quad\alpha=(d+1),\cdots,d(d+1)/2.
\end{cases}
\end{align}
This is an improvement of \cite[(3.32),(3.35)]{bll1}. Denote
$$F(t):=\int_{\Omega_{t}(z')}|\nabla w_{1}^{\alpha}|^{2}.$$
Then substituting \eqref{w} and \eqref{local est u} into \eqref{integrationbypart}, we have
\begin{equation}\label{energy_w1}
F(t)\leq\left(\frac{c_{1}\varepsilon}{s-t}\right)^{2}F(s)+C(s-t)^2s^{d-1}
\begin{cases}
\varepsilon^{\frac{2(m-2)}{m}-1},&~\alpha=1,\cdots,d,\\
\varepsilon^{-1},&~\alpha=(d+1),\cdots,d(d+1)/2,
\end{cases}
\end{equation}
where $c_{1}$ is a {\it universal canstant.} 

Let $k=\left[\frac{1}{4c_{1}\varepsilon^{1/m}}\right]$ and $t_{i}=2c_{1}i\varepsilon, i=1,\cdots,k$. Then by \eqref{energy_w1} with $s=t_{i+1}$ and $t=t_{i}$, we have
$$F(t_{i})\leq\frac{1}{4}F(t_{i+1})+C(i+1)^{d-1}\varepsilon^{d}
\begin{cases}
\varepsilon^{\frac{2(m-2)}{m}},&\quad\alpha=1,\cdots,d,\\
1,&\quad\alpha=(d+1),\cdots,d(d+1)/2.
\end{cases}$$
After $k$ iterations, using the global boundedness of $\nabla w_{1}^{\alpha}$, we have
$$F(t_{1})\leq C\varepsilon^{d}
\begin{cases}
\varepsilon^{\frac{2(m-2)}{m}},&\quad\alpha=1,\cdots,d,\\
1,&\quad\alpha=(d+1),\cdots,d(d+1)/2.
\end{cases}$$

{\bf Case 2. $\varepsilon^{1/m}<|z'|<R$.}  For $0<s<\frac{2}{3}|z'|$, \eqref{local est u} becomes
\begin{align*}
\int_{\Omega_s(z')}|\mathcal{L}_{\lambda,\mu}u_{1}^{\alpha}|^{2}\ dx\leq Cs^{d-1}
\begin{cases}
|z'|^{m-4},&\quad\alpha=1,\cdots,d,\\
|z'|^{-m},&\quad\alpha=(d+1),\cdots,d(d+1)/2.
\end{cases}
\end{align*}
\eqref{energy_w1} becomes
\begin{equation*}
F(t)\leq\left(\frac{c_{2}|z'|^{m}}{s-t}\right)^{2}F(s)+C(s-t)^2s^{d-1}\begin{cases}
|z'|^{m-4},&\quad\alpha=1,\cdots,d,\\
|z'|^{-m},&\quad\alpha=(d+1),\cdots,d(d+1)/2,
\end{cases}
\end{equation*}
where $c_{2}$ is another {\it universal canstant.} Let $k=\left[\frac{1}{4c_{2}|z'|}\right]$ and $t_{i}=2c_{2}i|z'|^{m}, i=1,\cdots,k$. Then by \eqref{energy_w1} with $s=t_{i+1}$ and $t=t_{i}$, we have
$$F(t_{i})\leq\frac{1}{4}F(t_{i+1})+C(i+1)^{d-1}|z'|^{md}
\begin{cases}
|z'|^{2(m-2)},&\quad\alpha=1,\cdots,d,\\
1,&\quad\alpha=(d+1),\cdots,d(d+1)/2.
\end{cases}$$
After $k$ iterations, using the global boundedness of $\nabla w_{1}^{\alpha}$, we have
$$F(t_{1})\leq C |z'|^{md}\begin{cases}
|z'|^{2(m-2)},&\quad\alpha=1,\cdots,d,\\
1,&\quad\alpha=(d+1),\cdots,d(d+1)/2.
\end{cases}$$
So \eqref{local bound} is proved.

{\bf Step 4. Scaling and $L^{\infty}$-estimates.}  It follows from \cite[(3.40)]{bll1} that
\begin{align*}
\|\nabla w_{1}^{\alpha}\|_{L^\infty(\Omega_{\delta/2}(z'))}\leq\frac{C}{\delta}\left(\delta^{1-\frac{d}{2}}\|\nabla w_{1}^{\alpha}\|_{L^2(\Omega_\delta(z'))}+\delta^2(z')\|\mathcal{L}_{\lambda,\mu}u_{1}^{\alpha}\|_{L^\infty(\Omega_\delta(z'))}\right).
\end{align*}
By using \eqref{local bound} and \eqref{auxi es claim}, we obtain 
\begin{equation*}
|\nabla w_{1}^{\alpha}(z',x_{d})|\leq\,C,\quad h_{2}(z')<x_{d}<\varepsilon+h_{1}(z').
\end{equation*}
Theorems \ref{coro v1} and \ref{coro v1d+1} are proved.
\end{proof}

\noindent{\bf{\large Acknowledgements.}} H.G. Li was partially supported by  NSFC (11631002, 11971061) and BJNSF (1202013). The authors are grateful to professor Yanyan Li for his encouragement and very helpful suggestions.

\end{document}